\date{Nov.~22, 2010}
\newtheorem{dummy}{anything}[section]
\newtheorem{theorem}[dummy]{Theorem}
\newtheorem*{thma}{Theorem A}
\newtheorem*{thmb}{Theorem B}
\newtheorem{lemma}[dummy]{Lemma}
\newtheorem{proposition}[dummy]{Proposition}
\newtheorem{corollary}[dummy]{Corollary}
\theoremstyle{definition}%%Change Theoremstyle
\newtheorem{definition}[dummy]{Definition}
  \newtheorem{example}[dummy]{Example}
  \newtheorem{remark}[dummy]{Remark}
  \newtheorem*{acknowledgement}{Acknowledgement}
\newcommand
{\eqncount}{\setcounter{equation}{\value{dummy}}%
\addtocounter{dummy}{1}}
\newcommand{\cP}{\mathcal P}
\newcommand{\cU}{\mathscr U}
\newcommand{\cV}{\mathscr V}
\newcommand{\bZ}{\mathbb Z}
\newcommand{\bbH}{\mathbb H}
\newcommand{\bbR}{\mathbb R}
\DeclareMathOperator{\Aut}{Aut}
\DeclareMathOperator{\Image}{im}
\DeclareMathOperator{\rk}{rank}
\newcommand{\bigast}{\divideontimes}
\newcommand{\cy}[1]{\bZ/{#1}}
\newcommand{\vv}{\, | \,}
\newcommand\Fp{\bZ_p}
\newcommand{\Hlf}[1]{H^{l{\hskip-1pt}f, #1}}
\newcommand{\HLF}{H^{l{\hskip-1pt}f}}
\newcommand{\ch}{HC}
\DeclareMathOperator{\asdim}{asdim}
\newcommand{\C}{C}
\renewcommand{\L}{L}
\newcommand{\ZZ}{Y}
\begin{document}

\title{Coarse Geometry and P.~A.~Smith Theory}
\author{Ian Hambleton}

\address{\vbox{\hbox{Department of Mathematics \& Statistics, }
\hbox{McMaster University, Hamilton, ON L8S 4K1, Canada}}}
\email{hambleton@mcmaster.ca }

%\author{Lucian Savin}
%\address{29 Daniel Bram Drive
%Maple, ON L6A 0L4,  Canada}
%\email{savinnl@yahoo.com}

\author{Lucian Savin}
\address{BMO, 100 King Street West, Toronto, ON M5X 1A1,  Canada}
\email{savinnl@yahoo.com}

\thanks{Research partially supported by NSERC Discovery Grant A4000. The first author would like  to thank the Max Planck Institut f\"ur Mathematik in Bonn and the Topology Group at M\"unster for their hospitality and support while part of this work was done.}

\begin{abstract} 
We define a generalization of the fixed point set, called the bounded
fixed set, for a  group acting by isometries on a metric space. An analogue of the P.~A.~Smith theorem is proved for metric spaces of finite asymptotic dimension, which relates the
coarse homology of the bounded fixed set to the coarse homology of the total space.
\end{abstract}

\maketitle

\section{Introduction} One of the most important tools in transformation groups is 
P.~A.~Smith theory \cite{pasmith1}, \cite[Chap.~III]{bredon1}, which gives constraints on the homology of the fixed point set for actions of finite $p$-groups. 
For topological actions the fixed point sets may not be manifolds, but ``generalized manifolds" with complicated local topology (in the sense of Wilder \cite[Chap.~I.3]{borel-seminar}).  This means that an appropriate homology theory must be used to capture the essential features.

Smith theory in the generalized manifold setting, as developed in the 1960 classic ``Seminar on Transformation Groups" \cite{borel-seminar}, was used recently by Bridson and Vogtmann \cite{bridson-vogtmann1} to study the actions of  $\Aut(F_n)$, the automorphism group  of a free group, on acyclic homology manifolds and generalized homology $m$-spheres.

In this paper we provide a ``coarse homology" version of P.~A.~Smith theory  suitable for further applications in geometric group theory.  We study discrete groups of isometries of metric spaces, from the
perspective of ``large-scale" geometry introduced by M.~Gromov in \cite{gromov1993}. This subject is now known as coarse geometry. We introduce a
coarse generalization of the usual fixed set, called the \emph{bounded fixed set} (see Definition
\ref{bdd}).  It is defined when the coarse type of a certain sequence of approximate
fixed sets  stabilizes, even when the actual fixed set is empty. A group action is called \emph{tame} if the bounded
fixed set exists with respect to any subgroup. 

We say that a metric space is a (mod $p$) \emph{coarse homology $m$-sphere} if
it has the same (mod $p$) coarse homology as the Euclidean space $\bbR^m$.
The main application is:

\begin{thma} Let  $X$ be a proper
geodesic metric space with finite asymptotic dimension, which is a (mod $p$) coarse
homology $m$-sphere, for some prime $p$. Let $G$ be a finite  $p$-group with a tame action on $X$ by isometries. Then $X^G_{bd}$ is
a (mod $p$)  coarse homology $r$-sphere, for some $0\le r\le m$. If $p$ is odd, then $m-r$ is even.
\end{thma}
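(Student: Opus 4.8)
The plan is to transport the architecture of classical P.~A.~Smith theory (\cite[Chap.~III]{bredon1}, \cite{borel-seminar}) to coarse homology, with the finite asymptotic dimension hypothesis supplying the tools that make the algebraic machinery available at the chain level. First I would reduce to the case $G=\cy{p}$. Since $G$ is a nontrivial finite $p$-group it has a central subgroup $H\cong\cy{p}$. Granting the theorem for $\cy{p}$, the bounded fixed set $X^H_{bd}$ is a (mod $p$) coarse homology $r_1$-sphere carrying an induced isometric action of $G/H$, and tameness is inherited since bounded fixed sets exist for every subgroup. The compatibility I would establish is $(X^H_{bd})^{G/H}_{bd}=X^G_{bd}$, so that induction on $|G|$ applies; the parity statement accumulates because each stage changes the dimension by an even amount when $p$ is odd.

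The heart of the argument is $G=\cy{p}=\langle t\rangle$. Let $C_*$ be a locally finite chain complex computing the mod $p$ coarse homology of $X$, regarded as a complex of $\bbF_p[\cy{p}]$-modules. Writing $\sigma=t-1$ and $\tau=1+t+\dots+t^{p-1}$, one has the identities $\tau=\sigma^{p-1}$ and $\sigma\tau=0$ in $\bbF_p[\cy{p}]$, giving a filtration
\[
C_* \supseteq \sigma C_* \supseteq \sigma^2 C_* \supseteq \dots \supseteq \sigma^{p-1}C_* = \tau C_* \supseteq 0.
\]
The essential geometric input, and the place where Definition~\ref{bdd} is used, is the identification of the mod $p$ coarse homology of $X^G_{bd}$ with the homology of the $\tau$-part of this complex: the approximate fixed sets whose coarse type stabilizes are exactly what realize the image of $\tau$ coarsely. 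Finite asymptotic dimension is what guarantees that mod $p$ coarse homology is computed by such a controlled complex, and that the coarse Mayer--Vietoris and excision statements hold, so that the operators $\sigma$ and $\tau$ acquire geometric meaning and the sequences below are exact.

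From the short exact sequences $0\to\sigma^{i+1}C_*\to\sigma^i C_*\to\sigma^i C_*/\sigma^{i+1}C_*\to 0$, together with the sequences relating these subquotients to the coarse chains of $X$ and of $X^G_{bd}$, I would extract the two special Smith long exact sequences (for $\rho=\sigma$ and $\rho=\tau$). Feeding in the hypothesis that $X$ is a coarse homology $m$-sphere --- so that its mod $p$ coarse homology is $\bbF_p$ in degree $m$ and zero otherwise --- the standard rank count along the filtration forces the mod $p$ coarse homology of $X^G_{bd}$ to be a single $\bbF_p$ in some degree $r$ with $0\le r\le m$; the lower bound $r\ge 0$ reflects that tameness makes $X^G_{bd}$ coarsely nonempty. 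Comparing the $\sigma$- and $\tau$-sequences, whose connecting homomorphisms differ by the appropriate degree shift, yields $m-r$ even when $p$ is odd (for $p=2$ one has $\sigma=\tau$ and no constraint).

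I expect the main obstacle to be the chain-level identification in the second paragraph: proving that the coarse homology of the bounded fixed set is exactly the homology of $\tau C_*$, equivalently that the stabilized approximate fixed sets compute the $\sigma$-invariant coarse homology. This is the step where the coarse and the equivariant structures must be reconciled, and it is precisely where finite asymptotic dimension --- through a collapsing argument for the relevant coarsening spectral sequence --- must be invoked to keep the complexes controlled and the Smith operators exact. Once this identification is in place, the remainder is the classical Smith rank bookkeeping carried out in the coarse category.
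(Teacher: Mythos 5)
Your overall architecture --- reduction to $\cy p$ by induction on $|G|$, Smith special homology built from the operators $1-t$ and the norm $N=1+t+\dots+t^{p-1}$, a rank count in the resulting exact sequences, and parity from an Euler--characteristic comparison --- is the same as the paper's. But the step you yourself single out as the heart of the argument is stated incorrectly, and as stated it would fail. The homology of the norm part $NC_*$ (your $\tau C_*$) is \emph{not} the coarse homology of $X^G_{bd}$: already in the classical simplicial setting one has $\Hlf{\sigma}_*(K,L)\cong \HLF_*(K/G,\,K^G\cup L/G)$, the homology of the \emph{orbit space relative to the fixed set} (equation \eqref{orbit space iso}); for a free action $NC_*$ computes $H_*(K/G)$ while $K^G=\emptyset$. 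The fixed--set chains enter the Smith machinery differently: for a \emph{regular} $G$-complex, $\ker(\varrho\colon C\to\varrho C)$ splits as $\bar\varrho C\oplus C(K^G)$, so the fixed set appears as a direct summand in the third vertex of the exact triangle \eqref{coarse exact triangle}, not as the homology of the norm subcomplex. If you run the rank count with the identification you propose, the bounded fixed set never actually enters the exact sequences, and no conclusion about $X^G_{bd}$ follows.

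The geometric inputs your sketch does not supply are exactly the paper's two technical pillars. First, finite asymptotic dimension is used to build a \emph{regular} $G$-coarsening system whose nerves have uniformly bounded dimension (Theorem \ref{regular coarsening system}, via $\varepsilon$-Lipschitz effectively proper maps to bounded-dimensional polyhedra and barycentric subdivision); the uniform dimension bound is what makes the infinite sums in the Smith inequalities (Theorem \ref{thm: pasmith}) terminate --- this is the ``finitistic'' role of the hypothesis, not Mayer--Vietoris or excision. Second, one needs the identification $\ch_*(X^G_{bd})\cong\varinjlim \HLF_*\bigl(K(\cU_n)^G\bigr)$ (Corollary \ref{rhomap}), proved by an interchange-of-limits argument comparing the subcomplexes $K(\cU_n|X^G_r)$ over the approximate fixed sets with the genuinely fixed subcomplexes $K(\cU_n)^G_r$, using regularity of the coverings; this is the statement that reconciles the coarse and equivariant structures and must replace your claimed identification before the Smith bookkeeping can begin. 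Two lesser points: the compatibility $(X^H_{bd})^{G/H}_{bd}=X^G_{bd}$ that your induction needs is not automatic and requires checking that the intermediate space is again finitistic and tame (as in the proof of Theorem \ref{final}); and the exclusion of rank zero together with the parity of $m-r$ both come from the congruence $\chi(X^G_{bd})\equiv\chi(X)\pmod p$ of Theorem \ref{thm: coarse euler}, not from comparing connecting homomorphisms.
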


The coarse geometry of group actions extends to \emph{quasi-actions} on proper metric spaces (see Section \ref{sec: quasi}).  In particular, the bounded fixed set is a quasi-isometry invariant (see Proposition \ref{bdfix-same}).
The coarse analogues of the usual Smith theory inequalities are established in Theorem \ref{thm: pasmith}, and used to derive Theorem A in Section \ref{proof of thma}.  

Another well-known application of the classical P.~A.~Smith theory is  that a rank two group $G = \cy p \times \cy p$, for $p$ a prime, can not act freely on a finitistic mod $p$ homology $m$-sphere (see Bredon \cite[III.8.1]{bredon1}). 
In Theorem \ref{thm: semifree}, we give a coarse version of this result.

\begin{thmb} The group $G = \cy p \times \cy p$, for $p$ a prime, can not act tamely and semifreely at the large scale on a (mod $p$) coarse
homology $m$-sphere $X$, whenever
 $X$ is a proper geodesic metric space with finite asymptotic dimension, and
$X^G_{bd}$ is
a (mod $p$)  coarse homology $r$-sphere, for some $0\le r< m$.
\end{thmb}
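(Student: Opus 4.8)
The plan is to reduce Theorem B to a numerical impossibility by playing the coarse Smith theory of Theorem A against a coarse analogue of Borel's dimension formula. The group $G=\cy p\times\cy p$ has exactly $p+1$ subgroups of order $p$, say $H_0,\dots,H_p$; each has index $p$ with $G/H_i\cong\cy p$, and every nontrivial element of $G$ lies in exactly one of them, so the entire singular structure of the action is carried by the $H_i$. I would begin by fixing this list and recording the elementary coarse inclusions $X^G_{bd}\subseteq X^{H_i}_{bd}$, valid for each $i$ because $H_i\le G$.

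Since the action is tame, every bounded fixed set $X^{H_i}_{bd}$ exists, and each $H_i\cong\cy p$ acts by isometries on the proper geodesic metric space $X$ of finite asymptotic dimension, which is a (mod $p$) coarse homology $m$-sphere. Theorem A, applied one subgroup at a time, then shows that $X^{H_i}_{bd}$ is a (mod $p$) coarse homology $r_i$-sphere with $0\le r_i\le m$. The hypothesis that the action is semifree at the large scale says that for every nontrivial subgroup the bounded fixed set agrees coarsely with $X^G_{bd}$; combined with the inclusions above this forces $r_i=r$ for every $i=0,\dots,p$.

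The decisive step is an additivity relation. I would establish a coarse Borel formula
\[
m-r \;=\; \sum_{i=0}^{p}\,(r_i-r),
\]
which reads the codimension of the total $G$-fixed set as the sum of the codimensions contributed by the $p+1$ index-$p$ subgroups. Substituting $r_i=r$ collapses the right-hand side to $0$, whence $m=r$, in direct contradiction with the hypothesis $0\le r<m$. This contradiction shows that no tame, large-scale semifree action of $\cy p\times\cy p$ on such an $X$ can exist, proving Theorem B.

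The heart of the matter, and the step I expect to be the main obstacle, is the coarse Borel formula: the single-subgroup inequalities of Theorem \ref{thm: pasmith} pin down each $X^{H_i}_{bd}$ in isolation but do not by themselves produce additivity across all $p+1$ subgroups. To obtain it I would iterate the coarse Smith sequences of Theorem \ref{thm: pasmith} along the residual actions of $G/H_i\cong\cy p$ on $X^{H_i}_{bd}$, whose large-scale fixed set is again $X^G_{bd}$, and assemble the long exact sequences into a global codimension count, tracking the single degree in which each coarse homology sphere is concentrated. The delicate point is to confirm that forming bounded fixed sets --- together with the stabilization of the approximate fixed sets that underlies Definition \ref{bdd} --- is compatible with the long exact Smith sequences in coarse (mod $p$) homology, so that the classical Borel additivity argument transfers faithfully to the coarse category.
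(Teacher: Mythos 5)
Your reduction of Theorem B to the additivity relation $m-r=\sum_{i=0}^{p}(r_i-r)$ over the $p+1$ index-$p$ subgroups is logically sound \emph{if} that coarse Borel formula is available, and the bookkeeping around it (tameness of $G$ passing to each $H_i$, semifreeness forcing $r_i=r$) is fine. But the formula itself is the entire content of the argument, and it is asserted rather than proved. This is a genuine gap, not a routine verification: even in the classical setting Borel's formula is \emph{not} a formal consequence of iterating the rank-one Smith inequalities subgroup by subgroup. Its proof (Borel Seminar, Chap.~XIII) runs through the equivariant cohomology of the Borel construction $EG\times_G X$, the module structure of $H^*(B(\cy p\times\cy p);\Fp)$, and a localization argument — machinery that the paper does not develop in the coarse category and whose transfer is far from automatic (one would need, at minimum, a coarse analogue of the fibration $X\to X_G\to BG$ compatible with the direct limits over a regular $G$-coarsening system, and a localization theorem for the resulting limit of spectral sequences). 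Your sketch — ``iterate the coarse Smith sequences along the residual $G/H_i$-actions and assemble the long exact sequences into a global codimension count'' — does not produce additivity: applying Theorem \ref{thm: pasmith} to the $G/H_i$-action on $X^{H_i}_{bd}$ only yields the chain of inequalities $r\le r_i\le m$ for each $i$ separately, with no mechanism coupling the different $i$'s. So you have reduced Theorem B to a strictly harder unproved statement.

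The paper avoids the Borel formula entirely. It crosses $X$ with $E=S^k\times S^k$ ($k>2m$ odd) carrying a free isometric $G_1\times G_2$-action, so that $Z=X\times E$ is coarsely equivalent to $X$ with $Z^G_{bd}=X^G_{bd}$, and then plays two computations of $\ch_{m+1}(Z/G,Z^G_{bd})$ against each other: a spectral sequence $H_s(BG;\ch_q(Z,Z^G_{bd}))\Rightarrow \ch_*(Z/G,Z^G_{bd})$ (obtained as a direct limit of fibration spectral sequences for the nerves) shows this group is nonzero when $r<m$, while the Smith inequalities of Theorem \ref{thm: pasmith} together with \eqref{rel.hom}, applied first to the $G_1$-action on $Z$ and then to the $G_2$-action on $Z/G_1$, force it to vanish. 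If you want to salvage your approach you would need to supply a full proof of the coarse Borel formula, which would be a substantial theorem in its own right; otherwise the two-step Smith/spectral-sequence contradiction is the workable route.
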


We do not yet know complete necessary and sufficient conditions for
tameness of actions on a given metric space. 
Example \ref{theexample} shows that the sequence of approximate fixed sets does not
always stabilize. On the other hand, in Section \ref{sec: hyperbolic} we show that the action of any finite subgroup
of isometries of hyperbolic $n$-space, or more generally  any proper $\text{CAT}(0)$ space, is tame.  In Theorem \ref{thm: existence}, we show that a finite group action on a coarsely homogeneous metric space $X$ is tame (e.g.~if $X$ admits a compatible proper and cocompact discrete group of isometries).

\section{Coarse geometry}

Coarse geometry studies the properties of coarse spaces and coarse maps. We will
consider only the metric examples of coarse spaces. For the general definition of a
coarse space see Roe \cite{roe3} or Mitchener \cite{mitchener1}, \cite{mitchener2}.

\begin{definition}\label{def: coarse map}
Let $(X, d_X)$ and $(Y, d_Y)$ be metric spaces and $f \colon X\to Y$ a map, not necessarily
continuous.
\begin{itemize}
\item[(a)] The map $f$ is (metrically) \emph{proper} if the inverse image under $f$
of any bounded subset of $Y$ is a bounded subset of $X$.
\item[(b)] The map $f$ is \emph{bornologous} if for every $R>0$ there is $S_R>0$ such that
$$d_X(x_1,x_2)\le R \Rightarrow d_Y\bigl(f(x_1),f(x_2)\bigr)\le S_R,$$
for all $x_1,x_2\in X$.
\item[(c)] The map $f$ is \emph{coarse} if it is proper and bornologous.
\end{itemize}
\end{definition}

Two maps $f$, $f'$ from a set $X$ to a metric space $Y$ are said to be \emph{close} if
$d_Y\bigl(f(x),f'(x)\bigr)$ is bounded, uniformly in $X$.

\begin{definition} \label{coarse-equiv}
Two metric spaces $X$ and $Y$ are \emph{coarsely equivalent} if there exist coarse maps
$f \colon X\to Y$ and $f'\colon Y\to X$ such that $f\circ f'$ and $f'\circ f$ are close to the
identity maps on $Y$ and on $X$ respectively. The maps $f$ and $f'$ are called
\emph{coarse equivalences}.
\end{definition}
We remark that if $f\colon X \to Y$ and $g\colon Y \to Z$ are coarse equivalences, then the composite $g\circ f \colon X \to Z$ is also a coarse equivalence.
\begin{definition}\label{eff_prop}
Let $X$, $Y$ be metric spaces and $f \colon X\to Y$.
\begin{itemize}
\item[(a)] $f$ is called \emph{eventually Lipschitz} (or \emph{large-scale Lipschitz})
if there are positive constants $\L$ and $C$ such that
$$ d_Y\bigl(f(x_1),f(x_2)\bigr)\le \L\cdot d_X(x_1,x_2)+C, $$
for any $x_1, x_2\in X$.
\item[(b)] $f$ is called \emph{effectively proper} if for every $R>0$, there is
$S>0$ such that the inverse image under $f$ of each ball of radius $R$ in $Y$ is
contained in a ball of radius $S$ in $X$.
\end{itemize}
\end{definition}

\begin{lemma}\label{equiv-to-image}
Let $f \colon X\to Y$ be a coarse equivalence and $A\subseteq X$. Then $A$ and $f(A)$ (with
the induced metrics) are coarsely equivalent.
\end{lemma}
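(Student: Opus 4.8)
The plan is to produce explicit coarse maps between $A$ and $f(A)$ by restricting the given coarse equivalence $f$ and its coarse inverse $f'$. Write $g = f|_A \colon A \to f(A)$ for the corestriction of $f$ to its image. The first step is to check that $g$ is a coarse map: properness and the bornologous property are inherited from $f$ since the metrics on $A$ and $f(A)$ are the subspace metrics, so the defining inequalities for $f$ hold verbatim for $g$ on the smaller domain and codomain. The only point requiring a moment's care is metric properness, since a bounded subset of $f(A)$ is also a bounded subset of $Y$, and its preimage under $g$ is contained in the preimage under $f$, hence bounded in $X$ and therefore bounded in $A$.

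The main issue, and the step I expect to be the real obstacle, is constructing a coarse inverse for $g$. The naive candidate $f'|_{f(A)}$ does not land in $A$: we only know $f'(f(A)) \subseteq X$, not $f'(f(A)) \subseteq A$. To fix this, I would use that $f' \circ f$ is close to $\id_X$, so there is a constant $K>0$ with $d_X\bigl(f'(f(a)), a\bigr) \le K$ for all $a \in A$. The plan is to define $h \colon f(A) \to A$ by choosing, for each $y \in f(A)$, a point $h(y) \in A$ at bounded distance from $f'(y)$; concretely, writing $y = f(a)$ for some $a \in A$ (using the axiom of choice to pick such a preimage $a$ when $g$ is not injective) and setting $h(y) = a$ works, since then $d_X\bigl(f'(y), h(y)\bigr) = d_X\bigl(f'(f(a)), a\bigr) \le K$, so $h$ is close to $f'|_{f(A)}$.

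With this $h$ in hand, the next step is to verify that $h$ is a coarse map. Because $h$ is close to the coarse map $f'|_{f(A)}$ (which is coarse by the same restriction argument used for $g$, reading distances in $A$ as distances in $X$ on the image), and since closeness of maps into a metric space preserves both the bornologous and the metrically proper properties, $h$ is coarse. It then remains to check the two composition conditions. The composite $h \circ g \colon A \to A$ sends $a$ to a chosen preimage of $f(a)$, and is close to $f' \circ f|_A$, which is close to $\id_A$ by the bound above; hence $h \circ g$ is close to $\id_A$. Symmetrically, $g \circ h \colon f(A) \to f(A)$ sends $y$ to $f(h(y))$, which is close to $f(f'(y))$, and since $f \circ f'$ is close to $\id_Y$ this composite is close to $\id_{f(A)}$ once we note all these points lie in $f(A)$. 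This exhibits $g$ and $h$ as mutually inverse coarse equivalences, completing the proof.
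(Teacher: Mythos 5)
Your proof is correct and follows essentially the same route as the paper: restrict $f$ to obtain the forward coarse map $A\to f(A)$, and define the inverse $f(A)\to A$ by choosing for each $y\in f(A)$ a preimage lying in $A$. The paper leaves the coarseness of this inverse as an easy check, whereas you justify it via closeness to $f'|_{f(A)}$, which is a valid way of filling in that detail.
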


\begin{proof}
The restriction of $f$ to $A$ is a coarse map. For any
$y\in f(A)$ we choose $x\in f^{-1}(y)$ and define $\bar{f}(y)=x$. We obtain a map
$\bar{f}\colon f(A)\to A$ such that $f\circ\bar{f}=id_{f(A)}$ and $\bar{f}\circ f$ is close
to $id_A$. One can easily check that $\bar{f}$ is a coarse map.
\end{proof}

\begin{definition}
Let $X$ and $Y$ be two metric spaces. A map $f \colon X \to Y$ is an 
$(\L, \C)$-\emph{quasi-isometric embedding}
for the positive constants $\L$ and $\C$, if for any
$x_1,\ x_2 \in X$ we have:
\eqncount
\begin{equation}\label{quasi}
\frac{1}{\L} \cdot d_X(x_1, x_2)-\C\le d_Y\bigl(f(x_1), f(x_2)\bigr)
\le \L\cdot d_X(x_1, x_2) + \C.
\end{equation}
If $f(X)$ is also \emph{coarsely dense} in $Y$ (that is, if
 any point in $Y$ lies in the
$\C$-neighbourhood $N_{\C}(f(X))$ of $\Image  f$), then $f$ is called an 
$(\L, \C)$-\emph{quasi-isometry}, or just a \emph{quasi-isometry} for short. 
\end{definition}
\begin{remark} If $f\colon X \to Y$ and $g\colon Y \to Z$ are quasi-isometries, then the composite $g\circ f\colon X \to Z$ is also a quasi-isometry. 
\end{remark}
Any quasi-isometric embedding is a coarse map: the first part of the above
inequality shows that $f$ is proper and the second part shows that $f$ is eventually
Lipschitz, thus bornologous. The next result shows that any quasi-isometry is a coarse
equivalence.
\begin{proposition}[{\cite[p.~138]{bridson-haefliger1}}]
If $f \colon X \to Y$ is a quasi-isometry, then there exists a quasi-isometry
$f'\colon Y \to X$ such that $f\circ f'$ is close to the identity map on $Y$ and $f'\circ f$
is close to the identity map on $X$.
\end{proposition}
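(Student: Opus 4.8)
The plan is to construct a quasi-inverse $f'$ by a choice of preimages, and then verify the three required properties by direct estimates using the quasi-isometry inequality \eqref{quasi}. Since $\Image f = f(X)$ is $\C$-dense in $Y$, for each $y \in Y$ I can choose a point $f'(y) \in X$ with $d_Y\bigl(f(f'(y)), y\bigr) \le \C$; invoking the axiom of choice, this defines a map $f' \colon Y \to X$. The estimate $d_Y\bigl(f(f'(y)), y\bigr) \le \C$ holds for every $y$ by construction, so $f \circ f'$ is automatically close to $\id_Y$.

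For the other composite, fix $x \in X$ and set $x' = f'(f(x))$. By construction $d_Y\bigl(f(x'), f(x)\bigr) \le \C$, and feeding this into the lower half of \eqref{quasi} gives $\frac{1}{\L}\, d_X(x', x) - \C \le d_Y\bigl(f(x'), f(x)\bigr) \le \C$, whence $d_X(x', x) \le 2\L\C$. Thus $f' \circ f$ is close to $\id_X$, and as a byproduct every $x \in X$ lies within $2\L\C$ of $f'(f(x)) \in \Image f'$, so $\Image f'$ is coarsely dense in $X$.

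It remains to check that $f'$ is itself a quasi-isometric embedding. Given $y_1, y_2 \in Y$, write $x_i = f'(y_i)$, so that $d_Y\bigl(f(x_i), y_i\bigr) \le \C$ and hence $\bigl\lvert d_Y(f(x_1), f(x_2)) - d_Y(y_1, y_2)\bigr\rvert \le 2\C$ by the triangle inequality. Combining this with the two halves of \eqref{quasi} applied to $x_1, x_2$—using the lower half to bound $d_X(x_1, x_2)$ from above and the upper half to bound it from below—yields the two-sided inequality of \eqref{quasi} for $f'$, with multiplicative constant $\L$ and an additive constant of order $\L\C$. Together with the coarse density established above, this shows $f'$ is a quasi-isometry.

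There is no essential obstacle here, since the lower bound in \eqref{quasi} is precisely what allows one to invert the estimates; the only real work is the bookkeeping of the constants, which is routine.
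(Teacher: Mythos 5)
Your proposal is correct and follows essentially the same route as the paper: construct $f'$ by choosing, for each $y$, a point whose image is within $\C$ of $y$, then verify closeness of both composites, coarse density of $\Image f'$, and the quasi-isometric embedding inequality for $f'$ via the lower bound in \eqref{quasi}. The only differences are cosmetic (a uniform choice of approximate preimage instead of the paper's case split on $y\in f(X)$, yielding $2\L\C$ in place of $\L\C$, and you actually carry out the embedding estimate that the paper leaves to the reader).
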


\begin{proof} Let $y\in Y$. If $y\in f(X)$, then choose $x\in X$ such that $f(x)=y$. If
$y\notin f(X)$, choose $\overline{x}\in X$ such that $d_Y\bigl(f(\overline x), y\bigr)\le C$.
Define $f'\colon Y \to X$ by:
$$f'(y)=\begin{cases}
x& \text{if }y\in f(X),\\
\overline{x} & \textrm{if }y \notin f(X).
\end{cases} $$

For any $x_1, x_2 \in f^{-1}(y)$ we have that $d_X(x_1, x_2) \le \L\cdot \C$
(from \ref{quasi}), so
$d_X\bigl(f'(f(x)), x\bigr) \le \L\cdot \C$ for any $x \in X$. Also, from the
definition of $f'$ we get that $d_Y\bigl(f(f'(y)), y\bigr) \le C$ for any
$y \in Y$. Therefore $f\circ f'$ and $f'\circ f$ are close to the identity maps on $Y$
and on $X$ respectively.

The image $f'(Y)$ is coarsely dense in $X$:
for any $x \in X$, let $y=f(x)\in Y$. Then
$d_X\bigl(x,f'(y)\bigr)= d_X\bigl(x,f'(f(x))\bigr)\le\L\cdot\C$.

Using (\ref{quasi}) and the triangle inequality, one can prove that $f'$ is a quasi-isometric embedding.
\end{proof}

For certain metric spaces the converse also holds.
\begin{proposition}[{\cite[1.10]{roe3}}]
If $X$ and $Y$ are length spaces, then any coarse equivalence $f \colon X\to Y$ is
a quasi-isometry.
\end{proposition}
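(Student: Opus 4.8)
The plan is to show that the length-space structure upgrades the bornologous condition into an eventually Lipschitz bound, and then to play the coarse equivalence and its coarse inverse off against each other to recover both inequalities in (\ref{quasi}).

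First I would record the key geometric input. In a length space the distance $d_X(x_1,x_2)=D$ is the infimum of lengths of paths from $x_1$ to $x_2$, so I can choose a path of length less than $D+1$ and subdivide it into subarcs of length at most $1$, producing a chain $x_1=p_0,p_1,\dots,p_N=x_2$ with $d_X(p_{i-1},p_i)\le 1$ and $N\le D+2$. Since $f$ is coarse, hence bornologous, there is a constant $S_1$ with $d_X(u,v)\le 1\Rightarrow d_Y\bigl(f(u),f(v)\bigr)\le S_1$. Applying the triangle inequality along the chain,
$$d_Y\bigl(f(x_1),f(x_2)\bigr)\le\sum_{i=1}^N d_Y\bigl(f(p_{i-1}),f(p_i)\bigr)\le N\,S_1\le S_1\cdot d_X(x_1,x_2)+2S_1,$$
so $f$ is eventually Lipschitz, which is exactly the right-hand inequality of (\ref{quasi}). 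The same argument shows that \emph{any} coarse map out of a length space is eventually Lipschitz.

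For the lower bound I would use the coarse inverse. Let $g\colon Y\to X$ be a coarse map with $g\circ f$ close to $\id_X$ and $f\circ g$ close to $\id_Y$; fix $K$ with $d_X\bigl(g(f(x)),x\bigr)\le K$ and $K'$ with $d_Y\bigl(f(g(y)),y\bigr)\le K'$ for all $x,y$. Since $Y$ is also a length space, the first step applied to $g$ yields constants $\L',\C'$ with $d_X\bigl(g(y_1),g(y_2)\bigr)\le \L'\, d_Y(y_1,y_2)+\C'$. Taking $y_i=f(x_i)$ and inserting the points $g(f(x_i))$ via the triangle inequality,
$$d_X(x_1,x_2)\le 2K+d_X\bigl(g(f(x_1)),g(f(x_2))\bigr)\le 2K+\L'\, d_Y\bigl(f(x_1),f(x_2)\bigr)+\C',$$
which rearranges to the left-hand inequality of (\ref{quasi}) with multiplicative constant $\L'$ and additive error $(\C'+2K)/\L'$. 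Taking $\L$ and $\C$ large enough to dominate $S_1,\L',(\C'+2K)/\L'$ gives both halves of (\ref{quasi}). Finally, coarse density of $f(X)$ is immediate, since every $y\in Y$ satisfies $d_Y\bigl(y,f(g(y))\bigr)\le K'$, so $y\in N_{K'}\bigl(f(X)\bigr)$; hence $f$ is a quasi-isometry.

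The one nontrivial point is the passage from bornologous to eventually Lipschitz, which is where the length-space hypothesis is essential: without it, bornologous control over pairs at distance $\le 1$ need not propagate linearly to far-apart pairs. Making the chaining argument precise, so that the number of edges in the chain is bounded linearly in $D$, is the heart of the matter; the remaining steps are bookkeeping with the triangle inequality and the closeness constants $K$ and $K'$.
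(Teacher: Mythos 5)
Your proof is correct, and it is the standard argument: the paper itself gives no proof here, simply citing Roe \cite[1.10]{roe3}, and the chaining argument you use to upgrade bornologous to eventually Lipschitz in a length space, combined with the coarse inverse for the lower bound and coarse density, is precisely the proof found there. The constants all check out ($N\le D+2$ edges, and the rearrangement giving additive error $(\C'+2K)/\L'$), so nothing further is needed.
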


\section{Quasi-actions}\label{sec: quasi}
Let $(X,d)$ be a metric space and $G$ be a discrete group. We say that $G$ \emph{acts coarsely} on $X$  (or $X$ is a \emph{coarse $G$-space}) if there are positive constants $\L$ and $\C$, and a map $\varphi\colon G\times  X \to X$ such that
\begin{enumerate}
\item For each $g\in G$, the map $x \mapsto \varphi(g,x):=g\cdot x$ is an $(\L, \C)$-quasi-isometry of $X$ (with $N_{\C}(g\cdot X) = X$).
\item The identity $e\in G$ acts as the identity on $X$, so $e\cdot x = x $ for all $x \in X$.
\item  For each $x\in X$ and each $g, h \in G$, $d(g(hx), (gh)x) \leq \C$.
\end{enumerate}
\begin{remark}
Sometimes the  condition (ii)  is omitted in the definition of coarse actions. Notice that any coarse action on $X$ in this more general sense is coarsely $G$-equivalent (via the identity map on $X$) to a coarse action on $X$ in which  $e\cdot x = x$, for all $x\in X$. 
\end{remark}
A coarse $G$-action is also called a \emph{quasi-action} of $G$ on $X$ (see \cite{kleiner-leeb1}). 
Sometimes we say that $G$ has an $(\L, \C)$-quasi-action on $X$ to specify the constants.
\begin{enumerate}
\item A coarse action of $G$ on $X$ is \emph{cobounded} if there exists $R>0$ such that for each $x\in X$, we have $N_R(G\cdot x) = X$.
\item A coarse action is \emph{proper} if for each $R>0$, there exists $M>0$ such that  for all $x, y \in X$, we have $\sharp\{g\in G\vv g\cdot N_R(x) \cap N_R(y) \neq \emptyset\} \leq M$.
\end{enumerate}

\begin{definition}
Let $X$ and $Y$ be coarse $G$-spaces.  A map $f \colon X\to Y$
is called \emph{coarsely $G$-equivariant} if there is a constant $N$ such that
$d_Y\bigl(gf(x), f(gx)\bigr)\le N$ for any $g\in G$ and $x\in X$.  We say that the actions are \emph{coarsely $G$-equivalent} if there exists  a quasi-isometry 
  $f\colon X \to Y$ which is a coarsely $G$-equivariant map.
\end{definition}

Coarsely $G$-equivalent $G$-actions are also called \emph{quasi-conjugate} in the literature. We also remark that the properties \emph{cobounded} or \emph{proper} for coarse actions are preserved by coarse $G$-equivalence. 

\begin{lemma}[Milnor-\u{S}varc]\label{milnor-svarc}
Let $(X,d)$ be a proper geodesic metric space. If $\Gamma$ is a discrete group with a proper, cobounded coarse action on $X$, then $\Gamma$ is  finitely-generated, and $X$ is quasi-isometric to the group $\Gamma$ with word metric.
\end{lemma}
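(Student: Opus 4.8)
The plan is to adapt the classical proof of the Milnor--\u{S}varc lemma, fixing a basepoint $x_0 \in X$ and studying the orbit map $\phi\colon \Gamma \to X$, $\phi(g) = g\cdot x_0$; the only genuinely new feature is that the quasi-action conditions (i)--(iii) must replace the exact identities available for honest isometric actions, so the argument becomes a careful exercise in constant-bookkeeping. Let $\L, \C$ be the constants of the $(\L,\C)$-quasi-action and let $R_0 > 0$ be a coboundedness constant, so that $N_{R_0}(\Gamma\cdot x_0) = X$.

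First I would isolate the key displacement estimate: for any $a, b \in \Gamma$,
\begin{equation*}
d\bigl((a^{-1}b)\cdot x_0,\, x_0\bigr) \le \L\cdot d\bigl(a\cdot x_0,\, b\cdot x_0\bigr) + 3\C .
\end{equation*}
This follows by placing $(a^{-1}b)\cdot x_0$ within $\C$ of $a^{-1}\cdot(b\cdot x_0)$ by condition (iii), using that $a^{-1}$ acts as an $(\L,\C)$-quasi-isometry (condition (i)) to compare $a^{-1}\cdot(b\cdot x_0)$ with $a^{-1}\cdot(a\cdot x_0)$, and then moving $a^{-1}\cdot(a\cdot x_0)$ within $\C$ of $(a^{-1}a)\cdot x_0 = e\cdot x_0 = x_0$ by conditions (iii) and (ii).

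Next, set $\lambda := \L(2R_0 + 1) + 3\C$ and $S := \{g \in \Gamma \vv d(g\cdot x_0, x_0) \le \lambda\}$. Properness, applied with $R = \lambda$ and $x = y = x_0$ (note $g\cdot x_0 \in (g\cdot N_\lambda(x_0)) \cap N_\lambda(x_0)$ for $g\in S$), shows $S$ is finite. To prove $S$ generates $\Gamma$ --- and simultaneously to obtain the lower quasi-isometry bound --- I would take $g, h \in \Gamma$, join $g\cdot x_0$ to $h\cdot x_0$ by a geodesic (available since $X$ is geodesic), subdivide it into $n \le d(g\cdot x_0, h\cdot x_0) + 1$ segments of length at most $1$, and use coboundedness to choose $\gamma_0 = g, \gamma_1, \dots, \gamma_n = h$ with each $\gamma_i\cdot x_0$ within $R_0$ of the $i$-th subdivision point. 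The key estimate then forces $\gamma_i^{-1}\gamma_{i+1} \in S$, while the \emph{exact} group identity $g^{-1}h = \prod_i (\gamma_i^{-1}\gamma_{i+1})$ writes $g^{-1}h$ as a product of at most $n$ generators. Specializing to $g = e$ proves $\Gamma$ is finitely generated; in general it yields $d_S(g,h) \le d(g\cdot x_0, h\cdot x_0) + 1$, where $d_S$ denotes the word metric.

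For the matching upper bound I would estimate, for a geodesic word $g = s_1\cdots s_k$ with $s_i \in S$, the displacement $d(g\cdot x_0, x_0)$ by telescoping along the partial products $w_j = s_1\cdots s_j$: conditions (i) and (iii) give $d(w_j\cdot x_0, w_{j-1}\cdot x_0) \le \L\lambda + 2\C$, hence $d(g\cdot x_0, x_0) \le (\L\lambda + 2\C)\, d_S(e,g)$, and one further application of the quasi-action axioms transfers this to $d(g\cdot x_0, h\cdot x_0) \le \L(\L\lambda + 2\C)\, d_S(g,h) + 2\C$. Together the two bounds show $\phi$ is a quasi-isometric embedding, and coboundedness ($N_{R_0}(\phi(\Gamma)) = X$) supplies coarse density, so $\phi$ is a quasi-isometry. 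The main obstacle is not conceptual but is exactly the propagation of the additive $\C$-errors: because composition is only quasi-associative and each $g^{-1}$ is only a quasi-isometry, every equality used in the isometric case must be downgraded to an inequality, and one must check that the accumulated errors remain uniformly bounded, independent of $g$, $h$, and the subdivision number $n$. This does hold because at each step the per-step error $\C$ is multiplied only by the single fixed constant $\L$ of one group element, and never by $n$.
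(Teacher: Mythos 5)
The paper does not actually prove this lemma; it simply cites Ghys--de la Harpe [Proposition 10.9], so your proposal is being compared against a citation rather than an internal argument. What you have written is the standard Milnor--\u{S}varc proof correctly adapted to the quasi-action axioms (i)--(iii), and it is correct: the displacement estimate $d\bigl((a^{-1}b)\cdot x_0, x_0\bigr) \le \L\, d(a\cdot x_0, b\cdot x_0) + 3\C$ is exactly the right replacement for the equality available for honest isometric actions; the finiteness of $S$ is a legitimate use of the properness axiom with $x = y = x_0$, since $g\cdot x_0$ lies in both $g\cdot N_\lambda(x_0)$ and $N_\lambda(x_0)$ when $g\in S$; and in the telescoping upper bound each step composes only one group element's quasi-isometry constant with the per-step error, so the accumulated error is indeed linear in the word length with uniform constants, as you note. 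One detail worth adding: for $d_S$ to be a word metric you should take the generating set to be $S\cup S^{-1}$, since the key estimate only gives $d(g^{-1}\cdot x_0, x_0)\le \L\lambda + 3\C$ for $g\in S$ and so $S$ need not be symmetric; this enlarged set is still finite, and because the chain elements $\gamma_i^{-1}\gamma_{i+1}$ and their inverses $\gamma_{i+1}^{-1}\gamma_i$ both satisfy the defining inequality for $S$, all of your stated bounds survive unchanged. The net effect of your approach is to make the paper self-contained in exactly the generality it needs (coarse actions on proper geodesic spaces), at the cost of the constant bookkeeping you describe.
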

\begin{proof} See Ghys and de la Harpe \cite[Proposition 10.9]{ghys-delaharpe2}. 
\end{proof}

If $f \colon X\to Y$ is a coarse $G$-equivalence, then the inverse quasi-isometry $f'\colon Y\to X$ (as in Definition \ref{coarse-equiv})
is also coarsely $G$-equivariant:  for any $y\in Y$, we have $d_Y\bigl(y, f(x)\bigr)\le M$,
for $x=f'(y)\in X$, by definition of $f'$. Then if $G$ has an $(\L', \C')$-quasi-action on $Y$, we have
\begin{gather*}
d_X\bigl(f'(gy),gf'(y)\bigr)\le d_X\bigl(f'(gy),f'(gf(x))\bigr)+
d_X\bigl(f'(gf(x)),f'(f(gx))\bigr)\\
+d_X\bigl(f'(f(gx)),gx\bigr)+d_X\bigl(gx,gf'(f(x))\bigr)+d_X\bigl(gf'(f(x)), gf'(y)\bigr)\\
\le (L+1)M + 2C + LS'_M + S'_{L'M + C'} + S'_N.
\end{gather*}
Here we have assumed that $d_X(f'(f(x)), x) \leq M$ for all $x \in X$, and denoted the constants for $f'$ from Definition \ref{def: coarse map} by $S'_N$, etc.

\begin{lemma} Let $f\colon X \to Y$ be a quasi-isometry, and suppose that $Y$ has a coarse action of  a discrete group $G$. Then $X$ admits a coarse action of $G$ so that $f$ is a coarse $G$-equivalence.
\end{lemma}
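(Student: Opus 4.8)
The plan is to transport the coarse $G$-action on $Y$ back to $X$ by conjugating with a quasi-inverse of $f$. By the Proposition of Bridson--Haefliger quoted above, since $f$ is a quasi-isometry there is a quasi-isometry $f'\colon Y\to X$ with $f\circ f'$ close to $\id_Y$ and $f'\circ f$ close to $\id_X$; I fix a constant $M$ with $d_Y\bigl(f(f'(y)),y\bigr)\le M$ and $d_X\bigl(f'(f(x)),x\bigr)\le M$ for all $x,y$. Writing the given $(\L',\C')$-quasi-action of $G$ on $Y$ as $(g,y)\mapsto g\cdot y$, I would define $\psi\colon G\times X\to X$ by $g\cdot_X x = x$ for $g=e$ and $g\cdot_X x = f'\bigl(g\cdot f(x)\bigr)$ for $g\neq e$. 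Setting the value at $e$ by hand secures condition (ii); alternatively one could define $g\cdot_X x=f'(g\cdot f(x))$ for all $g$ and pass to a unital action using the Remark after the definition of coarse action.

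Conditions (i) and the coarse equivariance of $f$ are then straightforward. For $g\neq e$ the map $x\mapsto g\cdot_X x$ is the composite $f'\circ(g\cdot)\circ f$ of three quasi-isometries, hence a quasi-isometry by the Remark on composites (in particular it has coarsely dense image); for $g=e$ it is the identity. For equivariance, when $g\neq e$ we have $f(g\cdot_X x)=f\bigl(f'(g\cdot f(x))\bigr)$, which lies within $M$ of $g\cdot f(x)$, and when $g=e$ both $f(e\cdot_X x)$ and $e\cdot f(x)$ equal $f(x)$; hence $d_Y\bigl(g\cdot f(x),f(g\cdot_X x)\bigr)\le M$ for all $g,x$. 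Since $f$ is a quasi-isometry, this makes $f$ a coarse $G$-equivalence.

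The substance of the argument is the cocycle condition (iii): I must bound $d_X\bigl(g\cdot_X(h\cdot_X x),(gh)\cdot_X x\bigr)$ uniformly in $g,h,x$. When $g,h,gh$ are all distinct from $e$, unwinding the definition gives $g\cdot_X(h\cdot_X x)=f'\bigl(g\cdot f(f'(h\cdot f(x)))\bigr)$. Here $f(f'(h\cdot f(x)))$ is within $M$ of $h\cdot f(x)$; the eventually Lipschitz map $(g\cdot)$ enlarges this gap to at most $\L' M+\C'$, and the associativity estimate (iii) on $Y$ adds a further $\C'$, so $g\cdot f(f'(h\cdot f(x)))$ lies within $\L' M+2\C'$ of $(gh)\cdot f(x)$. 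Applying the eventually Lipschitz map $f'$ then places $g\cdot_X(h\cdot_X x)$ within a fixed constant of $f'\bigl((gh)\cdot f(x)\bigr)=(gh)\cdot_X x$.

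It remains to treat the cases where some of $g,h,gh$ equal $e$. If $g=e$ or $h=e$ the two sides of (iii) agree identically, and if $gh=e$ with $g,h\neq e$ I would use $(gh)\cdot f(x)=e\cdot f(x)=f(x)$ together with $d_X\bigl(f'(f(x)),x\bigr)\le M$ to compare the chain above with $(gh)\cdot_X x=x$. Taking the maximum of the bounds over all cases gives a single constant for (iii), so $\psi$ is a coarse action of $G$ on $X$ and $f$ is the required coarse $G$-equivalence. I expect the only genuine obstacle to be the constant-chasing in (iii); conceptually the point is just that conjugation by a quasi-isometry carries quasi-actions to quasi-actions up to bounded error.
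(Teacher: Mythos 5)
Your proposal is correct and follows essentially the same route as the paper: both transport the action via $g\cdot x := f'(g\cdot f(x))$ for a quasi-inverse $f'$ and observe that $f$ then becomes a coarse $G$-equivalence. The paper leaves the verification of the quasi-action axioms as an assertion, whereas you carry out the constant-chasing (including the adjustment at $e$ and the cocycle estimate) explicitly and correctly.
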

\begin{proof} 
The given coarse action of $G$  on $(Y,d)$ induces a map $G\times X \to X$, via the formula $g\cdot x := f'(g\cdot f(x))$, where $f'$ denotes a quasi-inverse for $f$. Since $f$ is a quasi-isometry, it follows that this formula defines a coarse action of $G$ on $(X,d)$.  The induced coarse action on $X$ is coarsely $G$-equivalent (by $f$) to the original coarse action on $Y$. 
\end{proof}
A recent result of Kleiner and Leeb \cite{kleiner-leeb1} shows that a coarse action is always coarsely equivalent to an isometric action.
\begin{theorem}[{Kleiner-Leeb \cite[Corollary 1.1]{kleiner-leeb1}}]\label{kleiner-leeb}If a discrete group $G$ has an $(\L, \C)$-quasi-action $\varphi\colon G \times X \to X$ on a metric space $(X, d)$, then $\varphi$ is $(\L, 3\C)$-quasi-conjugate to a canonically defined isometric $G$-action on a metric space $(Y,d)$.
\end{theorem}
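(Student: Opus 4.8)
The plan is to realize the quasi-action as an honest isometric action on a space of orbit maps. Equip $\Map(G,X)$, the set of all maps $G \to X$, with the supremum metric $d_\infty(F_1, F_2) = \sup_{g \in G} d\bigl(F_1(g), F_2(g)\bigr)$, and let $G$ act by right translation, $(h\cdot F)(g) = F(gh)$. One checks immediately that $h \mapsto (F \mapsto h\cdot F)$ is a left action and that each $h\cdot(-)$ preserves $d_\infty$, since reindexing the supremum by $g \mapsto gh$ leaves it unchanged; thus $G$ acts by isometries. The candidate equivariant map is the orbit map $\Phi\colon X \to \Map(G,X)$, $\Phi(x)(g) = \varphi(g,x) = g\cdot x$.

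First I would record the elementary estimates for $\Phi$. Because each $g\in G$ acts as an $(\L,\C)$-quasi-isometry, $d(g\cdot x, g\cdot y) \le \L\, d(x,y) + \C$ for every $g$, so $d_\infty\bigl(\Phi(x),\Phi(y)\bigr) \le \L\, d(x,y)+\C$; evaluating at $g=e$ and using condition (ii), that $e$ acts as the identity, gives $\Phi(x)(e)=x$ and hence $d_\infty\bigl(\Phi(x),\Phi(y)\bigr) \ge d(x,y)$. Thus $\Phi$ is an $(\L,\C)$-quasi-isometric embedding (taking $\L\ge 1$ without loss), and in particular the values of $\Phi$ lie at finite $d_\infty$-distance from one another.

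Next I would take $Y := G\cdot \Phi(X) \subseteq \Map(G,X)$, the orbit of the image under the isometric action, with the restricted metric $d_\infty$; it is $G$-invariant by construction, and by the embedding estimate together with condition (iii) below the metric $d_\infty$ stays finite and nondegenerate on $Y$. The cocycle condition (iii) is what ties $\Phi$ to the action: for all $g,h\in G$ and all $x$,
$$ d\bigl(\Phi(h\cdot x)(g),\, (h\cdot \Phi(x))(g)\bigr) = d\bigl(g\cdot(h\cdot x),\, (gh)\cdot x\bigr) \le \C, $$
so $d_\infty\bigl(\Phi(h\cdot x), h\cdot \Phi(x)\bigr) \le \C$; that is, $\Phi$ is coarsely $G$-equivariant. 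The same inequality shows each orbit element $h\cdot\Phi(x)$ lies within $\C$ of $\Phi(h\cdot x) \in \Phi(X)$, so $\Phi(X)$ is $\C$-dense in $Y$. Combined with the previous paragraph, $\Phi$ is a coarsely $G$-equivariant quasi-isometry onto the isometric $G$-space $(Y, d_\infty)$, which is the desired quasi-conjugacy, and the construction of $(Y,d_\infty)$ from $\varphi$ is canonical.

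The main point requiring care is bookkeeping rather than anything conceptual: one must confirm finiteness of $d_\infty$ on $Y$, and then verify that the three separate additive errors appearing above — the $+\C$ in the upper quasi-isometry bound, the equivariance constant $\C$, and the $\C$-density constant — assemble into the stated $(\L, 3\C)$ bound. If one works from a definition in which condition (ii) is not imposed a priori, I would first replace $\varphi$ by the coarsely equivalent normalized action with $e\cdot x = x$ (as in the Remark following the definition of coarse actions), which is the natural source of the extra additive slack over $\C$.
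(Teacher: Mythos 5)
The paper does not actually prove this statement---it is quoted from Kleiner--Leeb---but your argument is a correct, self-contained proof and is essentially their construction: embed $X$ into a function space of orbit maps with the sup metric, let $G$ act by right translation, and verify that the orbit map $\Phi$ is a coarsely $G$-equivariant quasi-isometric embedding with $\C$-dense image in the $G$-invariant subspace $Y = G\cdot\Phi(X)$, on which $d_\infty$ is finite. Your constants $(\L,\C)$ are in fact sharper than the stated $(\L,3\C)$; the extra slack in the published statement comes from Kleiner--Leeb's slightly larger model space (all maps uniformly close to quasi-orbits, rather than just the orbit of $\Phi(X)$), so nothing is lost.
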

\begin{remark} The proof of Kleiner and Leeb \cite[p.~1566]{kleiner-leeb1} shows that if $G$ is a finite group and $(X,d)$ is a proper metric space, then so is $(Y,d)$.
\end{remark}

\section{Bounded fixed sets}\label{sec: four}

Let $(X,d)$ be a metric space with an $(\L, \C)$-quasi-action of a discrete group $G$. For any $k\ge 0$, let
$$X^G_k =
\{x\in X\vv d(x, gx) \le k,\ \forall g\in G \}.$$ The sets
$\{X^G_k\}$ form an increasing family of subsets of $X$ and one
can ask if their coarse geometry type stabilizes. The following definition was given in \cite{savin1}.

\begin{definition}\label{bdd} We say that the \emph{bounded fixed set of a coarse action $(X, G)$ exists}, provided that there exists a subspace $Y \subseteq X$ such that 
 \begin{enumerate}
 \item $Y \subset X^G_{k_0}$ for some $k_0 > 0$, and 
 \item the inclusion map 
$i\colon  Y\to X^G_k$ is a coarse equivalence for all $k\ge k_0$.
\end{enumerate}
 In this case we  write $Y = X^G_{bd}$. If
the coarse type of the subspaces $\{X^G_k\}$ does not stabilize, we say that $X^G_{bd}$ does
not exist.
\end{definition}

In general, $X^G_k$ can be empty for all $k$ (and in this case we have $X^G_{bd} = \emptyset$). For example, take
$G= \bZ$ acting on $X= \bbR$ by translations.
However, if $G$ is finite, then the sets $X^G_k$ are always
nonempty for large $k$. In fact, we have that
$$X=\bigcup_{k \ge 0} X^G_k.$$

\begin{remark}
The inclusion $i\colon X^G_k\to X^G_r$ ($r\ge k$) preserves the metric, so it is a quasi-isometric
embedding. If $X^G_k$ is coarsely dense in $X^G_r$, then $i$ is a quasi-isometry,
therefore a coarse equivalence. Note that the subspaces are coarsely $G$-invariant, in the sense that $G\cdot X^G_k \subseteq X^G_{r}$, where $r = Lk + 4C$.
\end{remark}

The orbit space $X/G$ of an isometric action has
 a natural (pseudo)-metric $d^*$ induced by the standard projection
$p\colon X\to X/G$ from the metric on $X$:

$$d^*\bigl(p(x),p(y)\bigr)=\inf_{g\in G}d(x,gy).$$
If $(X, G)$ is a quasi-action, we can define $X/G := X'/G$, where $(X', G)$ is any isometric action coarsely equivalent to $(X,G)$, as provided by Theorem \ref{kleiner-leeb}. This construction and the induced metric $d^*$ are both well-defined up to quasi-isometry.

\begin{definition}\label{def: coarsely ineffective}
A coarse $G$-action is called \emph{coarsely ineffective} if the map $p\colon  X\to X/G$ is a
coarse equivalence.
\end{definition}
If $G$ acts coarsely ineffectively on $X$, there is a coarse equivalence
$h\colon X/G\to X$. For any $x\in X$ and $g\in G$, we have that $p(x)=p(gx)$, so
$h\bigl(p(x)\bigr)=h\bigl(p(gx)\bigr)$. Also, from the definition of a coarse
equivalence, there is a constant $C$ so that $d\bigl(h\bigl(p(x)\bigr),x\bigr)\le C$
for any $x\in X$.Then
\begin{gather*}
d(gx,x)\le d\bigl(gx,h\bigl(p(gx)\bigr)\bigr)+d\bigl(h\bigl(p(x)\bigr),x\bigr)\le2C,
\end{gather*}
so $X^G_{bd}$ exists and is coarsely equivalent to $X$.

The converse also holds: if $X^G_{bd}$ exists and is coarsely equivalent to $X$, then
the action is coarsely ineffective. Choose $k>0$ so that $X^G_k$ is coarsely
dense in $X$. For any $x\in X$, there is $x'\in X^G_k$ such that $d(x,x')\le C$. Then
\begin{gather*}
d(gx,x)\le d(gx,gx')+d(gx',x')+d(x',x)\le 2C+k
\end{gather*}
It follows that
$$d(x,y)\le d(x,gy)+d(gy,y)\le d^*\bigl(p(x),p(y)\bigr)+2C+k$$
since given $\epsilon >0$, we can pick $g\in G$ such that $d(x,gy) \leq d^*\bigl(p(x),p(y)\bigr)+ \epsilon$.
Also, from the definition of $d^*$, we have that $d^*\bigl(p(x),p(y)\bigr)\le d(x,y)$.
The map $p$ is, obviously, surjective. Therefore, $p$ is a quasi-isometry, so it is a coarse
equivalence.

\begin{definition}
Let $(X,d)$ be a metric space with a coarse $G$-action. The coarse action of $G$ is called \emph{tame} if $X^H_{bd}$ exists for all
subgroups $H$ in $G$.
\end{definition}

It would be interesting to find a geometrical condition on $X$ which would
guarantee that the action of any finite subgroup of quasi-isometries of $X$ is tame.
We first point out that \emph{tameness} of the action is a coarse invariant.

\begin{proposition}\label{bdfix-same} Suppose that $(X, d)$ and $(Y,d)$ are 
coarse $G$-spaces.
If $f \colon X\to Y$ is a coarse $G$-equivalence
and $X^G_{bd}$ exists, then
$Y^G_{bd}$ exists and it is coarsely equivalent to $X^G_{bd}$.
\end{proposition}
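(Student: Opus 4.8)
The plan is to transport the stabilized tower $\{X^G_k\}$ across to $\{Y^G_m\}$ using $f$ together with its quasi-inverse $f'$, and to take $Z := f(X^G_{bd})$ as the candidate for $Y^G_{bd}$. The whole argument rests on a single observation: a coarsely $G$-equivariant quasi-isometry carries each approximate fixed set into an approximate fixed set of a controlled (shifted) level, so $f$ and $f'$ set up a correspondence between the two towers, and the stabilization on the $X$-side can be pushed forward.

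First I would record how $f$ and $f'$ interact with the approximate fixed sets. For $x\in X^G_k$ and $g\in G$, the triangle inequality gives
$$d_Y\bigl(f(x), g f(x)\bigr) \le d_Y\bigl(f(x), f(gx)\bigr) + d_Y\bigl(f(gx), g f(x)\bigr).$$
The first term is at most $\L\cdot d(x,gx)+\C\le \L k+\C$, since $f$ is a quasi-isometry and hence eventually Lipschitz; the second is at most the coarse-equivariance constant $N$ of $f$. Thus $f(X^G_k)\subseteq Y^G_{k'}$ with $k'=\L k+\C+N$. Because the quasi-inverse $f'\colon Y\to X$ is also coarsely $G$-equivariant (established in the computation preceding this proposition) and is itself a quasi-isometry, the identical estimate yields $f'(Y^G_m)\subseteq X^G_{m'}$ for a suitable level $m'=m'(m)$.

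Next I would verify the two conditions of Definition \ref{bdd} for $Z=f(X^G_{bd})$. Since $X^G_{bd}\subseteq X^G_{k_0}$, the first display gives $Z\subseteq Y^G_{k_0'}$ with $k_0'=\L k_0+\C+N$, which is condition (i). For condition (ii), fix $m\ge k_0'$; as $Z\subseteq Y^G_m$ with the induced metric, the inclusion $Z\hookrightarrow Y^G_m$ is isometric, hence a quasi-isometric embedding, and by the Remark following Definition \ref{bdd} it is a coarse equivalence once $Z$ is shown to be coarsely dense in $Y^G_m$. Given $y\in Y^G_m$, we have $f'(y)\in X^G_{m'}\subseteq X^G_{\max(m',k_0)}$, and since $X^G_{bd}$ is coarsely dense in $X^G_{\max(m',k_0)}$ (it is coarsely equivalent to it, as $\max(m',k_0)\ge k_0$), I can choose $x_0\in X^G_{bd}$ with $d\bigl(f'(y),x_0\bigr)$ bounded by a constant depending only on $m$. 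Pushing forward and using that $f$ is eventually Lipschitz and $f\circ f'$ is close to $\id_Y$,
$$d_Y\bigl(y, f(x_0)\bigr) \le d_Y\bigl(y, f(f'(y))\bigr) + \L\cdot d\bigl(f'(y), x_0\bigr) + \C$$
is bounded uniformly over $y\in Y^G_m$, and $f(x_0)\in Z$, proving coarse density.

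Having verified both conditions, $Y^G_{bd}=Z$ exists, and Lemma \ref{equiv-to-image} applied to the coarse equivalence $f$ and the subset $A=X^G_{bd}$ shows $X^G_{bd}$ and $Z=f(X^G_{bd})$ are coarsely equivalent, completing the statement. The step I expect to be the main obstacle is the coarse-density verification in condition (ii): the honest content there is the round trip through $f'$ and back through $f$, and the care needed is purely in the bookkeeping of levels — one must check that the relevant index $\max(m',k_0)$ is large enough to invoke the stabilization of the $X$-tower, and that all constants produced depend only on the fixed level $m$ and not on the point $y$, which is exactly what coarse density demands.
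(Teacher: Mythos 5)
Your argument is correct and follows essentially the same route as the paper: both proofs use coarse $G$-equivariance of $f$ and its quasi-inverse $f'$ to show each $X^G_k$ maps into some $Y^G_l$ and vice versa, then combine the stabilization of the $X$-tower with a round trip through $f'$ and $f$ to get coarse density, finishing with Lemma \ref{equiv-to-image}. The only cosmetic difference is that you verify Definition \ref{bdd} directly for the witness $Z=f(X^G_{bd})$ inside $Y$, whereas the paper runs the density argument on the $X$-side and then transfers it to the inclusions $Y^G_{l_1}\to Y^G_{l_2}$.
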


\begin{proof}
The existence of $X^G_{bd}$ implies that there is some $k_0$ such that the inclusion
$i\colon X^G_{k_1}\to X^G_{k_2}$ is a coarse equivalence, for any $k_2\ge k_1\ge k_0$. For any
$x\in X^G_k$ we have
$$d_Y\bigl(f(x),gf(x)\bigr)\le d_Y\bigl(f(x),f(gx)\bigr)+
d_Y\bigl(f(gx),gf(x)\bigr)\le S_k+N,$$
thus $f(X^G_k)\subseteq Y^G_l$ for $l\ge S_k+N$. Similarly, $f'(Y^G_l)\subseteq X^G_r$,
for some $r>0$. Then, we have
$$ f'\bigl(f(X^G_k)\bigr)\subseteq f'(Y^G_l)\subseteq X^G_r, $$
for $r>k\ge k_0$. $X^G_k$ is coarsely dense in $X^G_r$, so for any $x\in X^G_r$, there is
$x'\in X^G_k$ such that $d_X(x,x')\le C$, for some constant $C$. Then
$f'\bigl(f(X^G_k)\bigr)$ is also coarsely dense in $X^G_r$: for any $x\in X^G_r$ we have:
$$d_X\bigl(x,f'(f(x'))\bigr)\le d_X(x,x')+d_X\bigl(x',f'(f(x'))\bigr)\le C+M. $$
It follows that $f'(Y^G_l)$ is also coarsely dense in $X^G_r$, thus they
are coarsely equivalent. Lemma \ref{equiv-to-image} implies that $Y^G_l$ is coarsely
equivalent to $X^G_{bd}$ for any $l$ greater than some value $l_0$.

To finish the proof, we need to
show that the inclusion $Y^G_{l_1}\to Y^G_{l_2}$ is a coarse equivalence, for any
$l_2\ge l_1\ge l_0$. We have that $f'(Y^G_{l_1})\subseteq f'(Y^G_{l_2})\subseteq X^G_r$
for some $r>0$ and that $f'(Y^G_{l_1})$ is coarsely dense in $X^G_r$. This means that
$f'(Y^G_{l_1})$ is coarsely dense in $f'(Y^G_{l_2})$. So, for any $y\in Y^G_{l_2}$
there is a $y'\in Y^G_{l_1}$ such that $d_X\bigl(f'(y),f'(y')\bigr)\le C$. Then
\begin{gather*}
d_Y(y,y')\le d_Y\bigl(y, f(f'(y))\bigr)+d_Y\bigl(f(f'(y)), f(f'(y'))\bigr)\\
+d_Y\bigl(y',f(f'(y'))\bigr)\le 2M+S_C,
\end{gather*}
which completes the proof.
\end{proof}

After this result, and Theorem \ref{kleiner-leeb}, to study the tameness of a coarse $G$-action we can assume that $G$ is acting by isometries on $(X,d)$. 
In the rest of the paper we will also assume that $G$ is a \emph{finite group}, and that $(X,d)$ is a proper metric space.

The next example shows that assuming finite asymptotic dimension (see Section \ref{dim-asym}, or Roe \cite[\S 9]{roe3}) is not sufficient to ensure that $X^G_{bd}$ exists.

\begin{example}[Non-existence]\label{theexample}
Let $G=\bZ/2$. We will construct a space $X\subset\bbR^3$, consisting of
infinitely many ``goalposts'', on which $G$ acts freely and
the coarse type of $X^G_k$ does not stabilize.

\begin{figure}[!htp]
\begin{center}
\includegraphics[height=5cm]{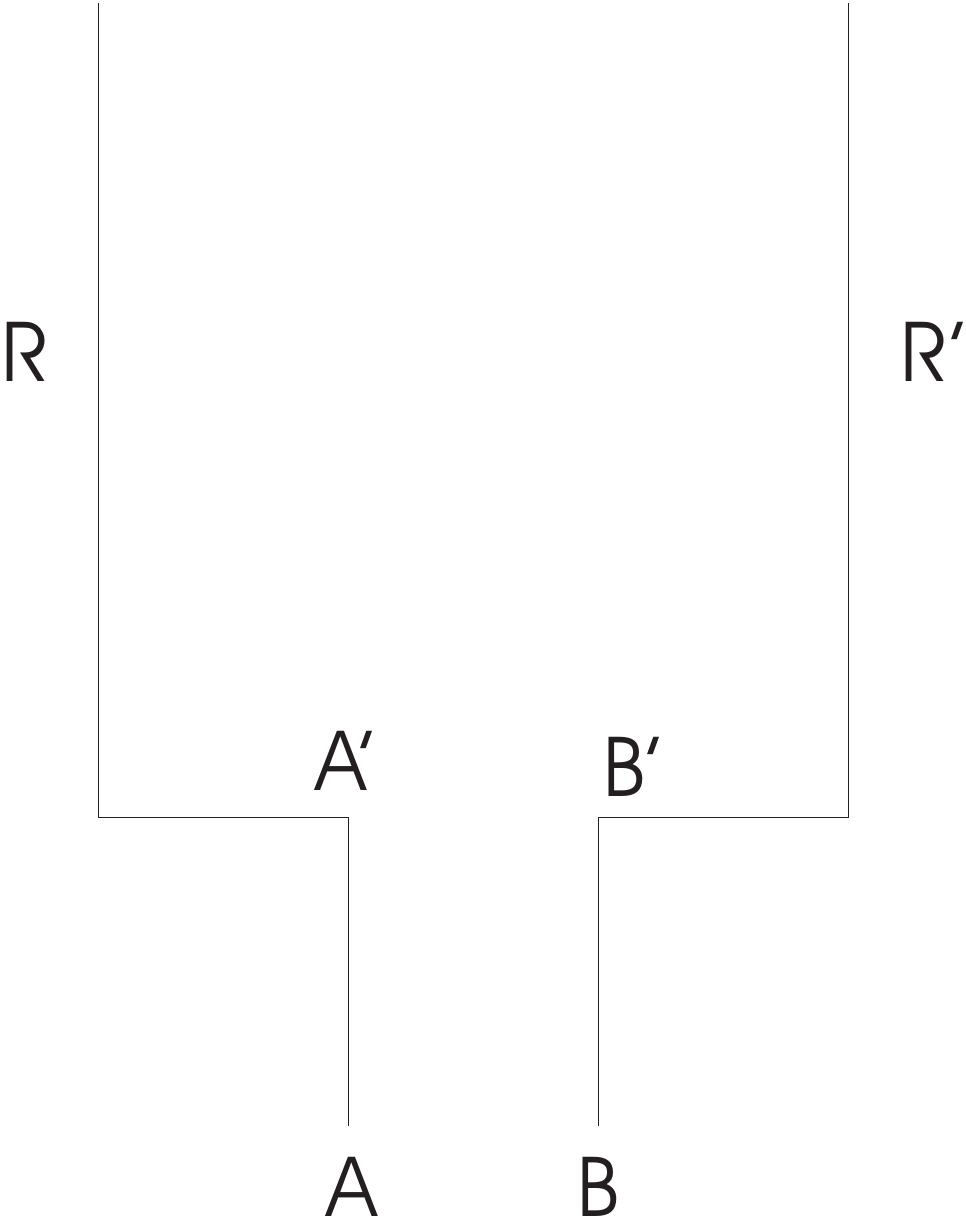}
\caption{A generic set $Y_n$}
\end{center}
\end{figure}

Start with two parallel lines, $Y_0=\{x=\frac{1}{2},z=0\}\cup\{x=-\frac{1}{2},z=0\}$.
Let $Y_n$ be the set shown in Figure 1 below, where the distances $AA'$, $BB'$ and $AB$
are equal to $1$ and the vertical rays $R$ and $R'$ are at distance $n$ from each other.

For any $n\ge1$, consider the points $(\frac{1}{2},n,0)$ and $(-\frac{1}{2},n,0)$
in $Y_0$ and identify these points to the
points $A$ and $B$ of a copy of $Y_n$ such that $Y_n$ makes an angle
of $\frac{n\pi}{2n+2}$ with the $xy$-plane (see Figure 2 below).

\begin{figure}[!htp]
\begin{center}
\includegraphics[height=5cm]{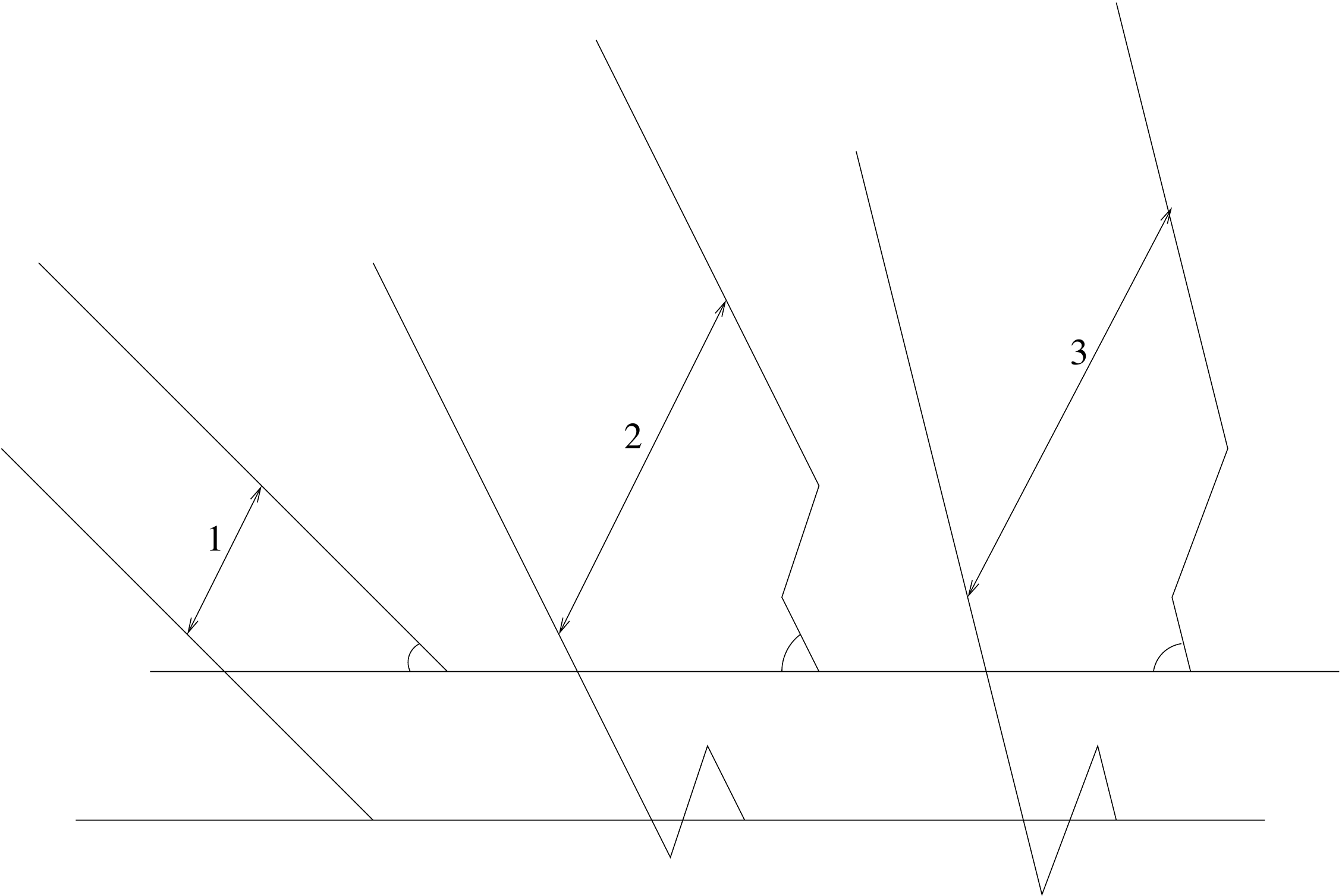}
\caption{The set $X$}
\end{center}
\end{figure}

We obtain a space $X=\bigcup Y_n$ on which $\bZ/2$ acts freely, by interchanging
the two branches. The asymptotic dimension of $X$ (with the induced metric) is at most 3, 
because it is a subset of $\bbR^3$ (see \cite[9.10]{roe3}).

One can see that the coarse type of the sets $X^G_k$ does not stabilize: for any integer
$n$, we have that $Y_n\subset X^G_n$, but $Y_n$ contains points arbitrarily far away from
$X^G_r$, for any $r<n$. So the coarse type of $X^G_k$ changes when $k$ takes integer values,
therefore it does not stabilize.
\end{example}

Here is a condition which ensures that that the bounded fixed set exists. 

\begin{theorem}\label{thm: existence} Let $(X,d)$ be a proper geodesic metric space. If $(X,d)$ admits a proper and cobounded coarse action by a discrete group $\Gamma$, then the bounded fixed set $X^G_{bd}$ exists for any finite subgroup $G \subset \Gamma$.
\end{theorem}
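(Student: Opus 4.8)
The plan is to reduce the statement to a purely group-theoretic computation inside $\Gamma$ and then to exploit the finiteness of metric balls in a finitely generated group.

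First I would invoke the Milnor--\u{S}varc Lemma \ref{milnor-svarc}: since $\Gamma$ acts properly and coboundedly on the proper geodesic space $X$, the group $\Gamma$ is finitely generated and (for a fixed basepoint $x_0$) the orbit map $\psi\colon \Gamma \to X$, $\psi(\gamma)=\gamma\cdot x_0$, is a quasi-isometry, where $\Gamma$ carries the word metric $d$ of some finite generating set. Property (iii) of a coarse action gives $d_X\bigl(\psi(\gamma'\gamma),\gamma'\psi(\gamma)\bigr)=d_X\bigl((\gamma'\gamma)x_0,\gamma'(\gamma x_0)\bigr)\le \C$, so $\psi$ is coarsely $\Gamma$-equivariant, hence coarsely $G$-equivariant once we let $G$ act on $\Gamma$ by left multiplication (a genuine isometric action, since the word metric is left-invariant). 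Thus $\psi$ is a coarse $G$-equivalence, and by Proposition \ref{bdfix-same} it suffices to prove that $\Gamma^G_{bd}$ exists for the left-translation action of $G$ on $(\Gamma,d)$. The equivariance check here is routine.

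Next I would analyze the approximate fixed sets in $\Gamma$. Left-invariance gives $d(\gamma,g\gamma)=|\gamma^{-1}g\gamma|$ (word length), so
\[
\Gamma^G_k=\{\gamma\in\Gamma : |\gamma^{-1}g\gamma|\le k \text{ for all } g\in G\}.
\]
Because $\Gamma$ is finitely generated, the ball $B(e,k)$ is finite, so only finitely many subgroups of $\Gamma$ lie inside it. For $\gamma\in\Gamma^G_k$ the conjugate $\gamma^{-1}G\gamma$ is such a subgroup, and a short computation shows that the set of all $\gamma$ realizing a fixed conjugate $H=\gamma^{-1}G\gamma$ is exactly one right coset $N\gamma_H$ of the normalizer $N:=N_\Gamma(G)$ (indeed $\gamma^{-1}G\gamma=\gamma_H^{-1}G\gamma_H$ iff $\gamma\gamma_H^{-1}\in N$). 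Hence $\Gamma^G_k=\bigsqcup_j N\gamma_j$ is a \emph{finite} union of right cosets of $N$, indexed by the finitely many conjugates of $G$ contained in $B(e,k)$; the coset of $e$ shows that $N\subseteq \Gamma^G_k$ as soon as $k\ge k_0:=\max_{g\in G}|g|$.

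Finally I would control the coarse geometry of this union. Using left-invariance one checks that, for each fixed $\gamma_j$, the entire right coset $N\gamma_j$ sits at the constant distance $d(\gamma_j,N)$ from $N$, and symmetrically every point of $N$ lies within a fixed distance of $N\gamma_j$; so each $N\gamma_j$ is within finite Hausdorff distance of $N$. Since there are only finitely many cosets, there is a single $R_k<\infty$ with $\Gamma^G_k\subseteq N_{R_k}(N)$, while $N\subseteq\Gamma^G_k$, so $N$ is coarsely dense in $\Gamma^G_k$. The metric-preserving inclusions $N\hookrightarrow\Gamma^G_{k_1}\hookrightarrow\Gamma^G_{k_2}$ then have coarsely dense image, hence (by the Remark following Definition \ref{bdd}) are coarse equivalences for all $k_2\ge k_1\ge k_0$; thus $\Gamma^G_{bd}$ exists and is coarsely the normalizer $N_\Gamma(G)$, and transporting back along $\psi$ yields the existence of $X^G_{bd}$. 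I expect the stabilization step to be the main obstacle: individual right cosets of $N$ can lie arbitrarily far from $N$, so the argument hinges entirely on the finiteness of balls in $\Gamma$, which bounds the number of cosets occurring in each $\Gamma^G_k$ and thereby keeps $R_k$ finite.
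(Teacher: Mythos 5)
Your proof is correct and follows essentially the same route as the paper: reduce via Milnor--\u{S}varc to left translation on $(\Gamma, d_{\mathrm{word}})$, then use finiteness of the ball $B(e,k)$ to split $\Gamma^G_k$ into finitely many pieces, each a bounded distance from a fixed subgroup. The only difference is cosmetic: the paper uses the centralizer $C_\Gamma(G)$ and partitions $\Gamma^G_k$ into finitely many $C_\Gamma(G)$-orbits via the fingerprint $\gamma \mapsto (\gamma^{-1}g\gamma)_{g\in G}$, whereas you use the normalizer $N_\Gamma(G)$ and the coarser invariant $\gamma \mapsto \gamma^{-1}G\gamma$; since $C_\Gamma(G)$ has finite index in $N_\Gamma(G)$ (the quotient embeds in $\Aut(G)$), the two identifications of $X^G_{bd}$ agree up to coarse equivalence.
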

\begin{proof} By the Milnor-\u{S}varc Theorem \ref{milnor-svarc}, we may assume that $(X,d) = (|\Gamma|, d)$ is just the group $\Gamma$ with the (left invariant) word metric. The identity element $e\in \Gamma$ will be taken as a base-point, and the action of an element $g\in G$ on $X$ will be denoted $\gamma \mapsto g\cdot\gamma$, for $\gamma \in \Gamma$. 

Let $H = C_\Gamma(G) = \{ z\in \Gamma\vv gz = zg, \ \forall g\in G\}$ 
 denote the centralizer subgroup  for $G$ in $\Gamma$, and let $Y = |C_\Gamma(G)|\subset X$ denote the subspace of $X$ consisting of the group elements in the centralizer.
Since $d(z, g\cdot z) = d(e, z^{-1}(g\cdot z)) = d(e,g\cdot e)$, for all $z\in H$ and all $g\in G$, we see that $Y \subset X^G_k$ as soon as $k > \max \{d(e,g\cdot e) \vv g \in G\}$. We also observe that $H$ acts on $X^G_k$, for any $k >0$, defined by the formula $x \mapsto zx$, for $x\in X^G_k$ and $z\in H$, and $H\cdot X^G_k \subseteq X^G_{k}$.

Now suppose that $k>0$ is large enough so that $Y \subset X^G_k \neq \emptyset$. Let $S: =\{ y_1, y_2, \dots, y_t\}$ denote the distinct elements in the ball $B(e, k) \subset \Gamma $ of radius $k$ around the identity element $e\in \Gamma$. Since $d(x, g\cdot x) = d(e, x^{-1}(g\cdot x)) < k$ for any $x\in X^G_k$ and any $g\in G$, we see that each such element $x^{-1}(g\cdot x)$ must equal one of the $y_i$.
We fix an ordering of the elements of $G$ and obtain a map
$$\varphi\colon X^G_k \to \cP(S)$$ defined by
$x\mapsto \{x^{-1}(g\cdot x)\vv g \in G\}$ from $X^G_k$  to the finite subsets $\cP(S)$ of $S$. Since 
$\varphi(zx) =\varphi(x)$, for all $z\in H$ and all $x\in X^G_k$
it follows that $X^G_k$ is contained in the union of finitely many $H$-orbits. Pick elements $\{x_1, x_2, \dots, x_m\}$ in $X^G_k$ representing the distinct  $H$-orbits. Then any $x \in X^G_k$ can be expressed as $x = zx_i$, for some $z\in H$ and $1\leq i\leq m$. But then $d(z, x) = d(z, zx_i) = d(e, x_i)$. Therefore the inclusion $Y \subset X^G_k$ is coarsely dense (with maximum distance $N = \max \| x_i\|$, $1\leq i\leq m$).
\end{proof}
\begin{remark} We have actually shown that $X^G_{bd}$ is quasi-isometric to the subspace $Y = |C_\Gamma(G)|$ defined in the proof. 
\end{remark}

Here are some examples of tame actions:

\begin{example}[Euclidean space]
Let $X=\bbR^n$ with the Euclidean metric and $G$ be a finite group which
acts on $\bbR^n$ by isometries.  Then $X^G_{bd} = X^G$ is the linear subspace of $\bbR^n$ fixed by $G$.
\end{example}
\begin{example} (Semi-direct products) The example which inspired the definition of the bounded fixed set was a semi-direct product
$\Gamma=\bZ^n\rtimes_{\alpha}G$, given by an integral representation $\alpha\colon G\to GL_n(\bZ)$. In this case, $X = (|\Gamma|,d)$ is just the group $\Gamma$ equipped with the word metric, and $G$ acts by left multiplication. Then $X^G_{bd}$ is coarsely equivalent to the fixed sub-representation of $\bbR^n$ induced by the conjugation action $\alpha$ of $G$ on the normal subgroup $\bZ^n$. 
\end{example}

\section{Hyperbolic space}\label{sec: hyperbolic}

In this section we will show that the action of any finite subgroup of
$\text{Isom}(\bbH^n)$ on $\bbH^n$ is tame, where $\bbH^n$ is
the hyperbolic $n$-space and $\text{Isom}(\bbH^n)$ is its group of isometries.
We will use the Poincar\'e model, in which the points of hyperbolic
$n$-space are represented by the points of the open unit ball $B^n$ in $\bbR^n$.
The geodesic lines are the intersection of $B^n$ with those Euclidean lines and circles
which are orthogonal to the boundary of $B^n$. An advantage of this model is that the
angle between two geodesics issuing from the same point is the Euclidean angle between them.

\begin{definition}
Let $X$ be a metric space and consider two geodesic rays $c,c'\colon [0,\infty)\to X$.
We say that $c$ and $c'$ are \emph{asymptotic} if there is a
constant $K$ such that $d\bigl(c(t),c'(t)\bigr)\le K$ for any $t\ge0$.
\end{definition}

One can easily check that this is an equivalence relation. The set of equivalence
classes is called the \emph{boundary} of $X$ and is denoted by $\partial X$. The
points of $\partial X$ are called \emph{points at infinity}. The equivalence
class of a geodesic ray $c$ will be denoted $c(\infty)$.

Notice that the images of two asymptotic geodesic rays under any isometry of $X$ are
again asymptotic geodesic rays. So, if $G$ is a group which acts on $X$ by isometries,
there is an induced $G$-action on $\partial X$.

\begin{proposition}[\cite{bridson-haefliger1}]
Let $c\colon [0,\infty)\to\bbH^n$ be a geodesic ray issuing from $x$ (i.e. $c(0)=x$).
Then, for any $x'\in\bbH^n$, there is a unique geodesic ray which issues from
$x'$ and is asymptotic to $c$.
\end{proposition}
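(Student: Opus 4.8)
The plan is to use two standard features of $\bbH^n$: it is a complete, proper CAT($0$) space (indeed CAT($-1$)), so that the distance function between any two geodesics is convex, and geodesic segments between given endpoints are unique. I will treat existence and uniqueness separately, expecting existence to be the more delicate of the two.

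For uniqueness, suppose $c'$ and $c''$ are two geodesic rays issuing from the same point $x'$, each asymptotic to $c$. Then $c'$ and $c''$ are asymptotic to one another, so the function $f(t) = d\bigl(c'(t), c''(t)\bigr)$ is bounded. By convexity of the metric, $f$ is a convex function on $[0,\infty)$ with $f(0) = d(x', x') = 0$. A nonnegative bounded convex function vanishing at $0$ must vanish identically: if $f(t_0) > 0$ for some $t_0$, convexity gives $f(t) \ge (t/t_0)\, f(t_0)$ for $t \ge t_0$, contradicting boundedness. Hence $c' = c''$.

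For existence, fix $x'$ and for each $t \ge 0$ let $\sigma_t$ be the unique geodesic segment from $x'$ to $c(t)$. The idea is to show that as $t \to \infty$ these segments converge, uniformly on compact subsets of $[0,\infty)$, to a geodesic ray $c'$ issuing from $x'$. Applying convexity of the metric to the pair $\sigma_s$ and $\sigma_t$ (both issuing from $x'$), together with the bound $d\bigl(c(s), c(t)\bigr) = |s-t|$ coming from $c$ being unit-speed, I would obtain uniform estimates showing that $\{\sigma_t(u)\}$ is Cauchy as $t\to\infty$ for each fixed arc-length parameter $u$. Properness and completeness of $\bbH^n$ then let me define $c'(u) := \lim_{t\to\infty}\sigma_t(u)$, which inherits the property of being a unit-speed geodesic ray from $x'$. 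Finally I would verify that $c'$ is asymptotic to $c$ by bounding $d\bigl(c'(u), c(u)\bigr)$ uniformly in $u$, using the same comparison estimates.

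The main obstacle is the convergence of the segments $\sigma_t$ in the existence step: because the segments have different lengths, the convexity inequality must be applied with care to compare $\sigma_s(u)$ and $\sigma_t(u)$ at equal arc-length parameters and to control the behavior near infinity. In the concrete Poincar\'e ball model one can shortcut this: the ray $c$ limits in the Euclidean closure to a single boundary point $\xi = c(\infty) \in S^{n-1} = \bd B^n$, and the unique Euclidean circle or line through $x'$ orthogonal to $\bd B^n$ and passing through $\xi$ supplies the ray $c'$ directly, with uniqueness re-derived via the convexity argument above.
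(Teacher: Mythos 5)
The paper itself gives no argument for this proposition---it simply cites Bridson--Haefliger \cite{bridson-haefliger1}, II.8.2---so your proposal must stand on its own. Your uniqueness argument is correct and is the standard one: the function $t\mapsto d\bigl(c'(t),c''(t)\bigr)$ is convex, bounded, and vanishes at $0$, hence vanishes identically.

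The existence step as you outline it has a genuine gap. Comparing $\sigma_s$ and $\sigma_t$ (both issuing from $x'$) by convexity of the metric gives, after reparametrizing both proportionally on $[0,1]$, the estimate $d\bigl(\sigma_s(u\ell_s),\sigma_t(u\ell_t)\bigr)\le u\,d\bigl(c(s),c(t)\bigr)=u\,|t-s|$, where $\ell_t=d\bigl(x',c(t)\bigr)$. At a fixed arc length $v$ this reads $d\bigl(\sigma_s(v),\sigma_t(v\ell_t/\ell_s)\bigr)\le (v/\ell_s)\,|t-s|$, and this does \emph{not} tend to $0$ as $s,t\to\infty$ independently (take $t=2s$: since $\ell_s\sim s$, the bound is approximately $v$). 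So this comparison cannot establish that $\{\sigma_t(v)\}$ is Cauchy, precisely because the right-hand endpoints drift apart without bound. The estimate that actually works compares each $\sigma_t$ with the segment $c|_{[0,t]}$ of the \emph{given} ray: these two geodesics share the endpoint $c(t)$, and their initial points are at the fixed distance $d(x,x')$, so convexity gives $d\bigl(\sigma_t(u\ell_t),c(ut)\bigr)\le (1-u)\,d(x,x')\le d(x,x')$ for all $u\in[0,1]$ and all $t$. This uniform bound plus properness (Arzel\`a--Ascoli) yields a limiting unit-speed ray from $x'$ lying within bounded distance of $c$, which is the argument in \cite{bridson-haefliger1}. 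Your Poincar\'e-model fallback does rescue existence for $\bbH^n$ specifically, but it too leaves a step unproved: you must still verify that the circular arc through $x'$ ending at $\xi=c(\infty)$ is asymptotic to $c$; this is easiest in the upper half-space model with $\xi$ sent to infinity, where both rays become vertical and the distance between corresponding points is bounded (indeed tends to $0$).
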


\begin{proof} See Bridson-Haefliger \cite{bridson-haefliger1}, Chapter II.8, Proposition 8.2.
\end{proof}

Let $X$ be a complete $\text{CAT}(0)$ space (see \cite{bridson-haefliger1}, Chapter II.1,
Definition 1.1). There is a topology on $\overline X=X\cup\partial X$, called
the \emph{cone topology}, such that the subspace topology of $X$ is the original metric
topology. A neighbourhood basis for the points at infinity has the following
form: given a geodesic ray $c$ and positive constants $\varepsilon>0$ and $r>0$, then
\begin{gather*}
U(c,r,\varepsilon)=\{x\in X\vv d\bigl(x,c(0)\bigr)>r, d\bigl(p_r(x),c(r)\bigr)
<\varepsilon\}\\
\cup \{\xi\in\partial X\vv d\bigl(p_r(\xi),c(r)\bigr)<\varepsilon\}
\end{gather*}
where $p_r\colon \overline X\to B\bigl(c(0),r\bigr)$ is the projection of $\overline X$ onto
the closed ball $B\bigl(c(0),r\bigr)$ defined by:

$$ p_r(x)=\begin{cases}
x& \text{if }x\in B\bigl(c(0),r\bigr),\\
c'(r) & \textrm{if }x\in\overline X\backslash B\bigl(c(0),r\bigr),
\end{cases} $$
where $c'$ is the geodesic ray issuing from $c(0)$ and passing through $x$ (if $x\in X$)
or representing $x$ (if $x\in\partial X$).

If $X$ is the Poincar\'e model of the hyperbolic $n$-space, it is known that
$\partial X$ is homeomorphic to the $(n-1)$-sphere in $\bbR^n$.

Let $G$ be a finite subgroup of $\text{Isom}(\bbH^n)$. Then the
fixed point set $\bbH^G$ is nonempty (see \cite{bridson-haefliger1}, Chapter II.2,
Corollary 2.8 (1)). One can show that if $\bbH^G$ is bounded, then the action on
the boundary is free: if some point at infinity is fixed by $G$, let $c$
be the geodesic ray issuing from a fixed point $x\in\bbH^G$, so that
$c(\infty)$ is fixed. Then $c$ and $gc$ are asymptotic geodesic rays issuing
from the same point, thus $c(t)=gc(t)$, for all $t>0$, therefore $\bbH^G$ contains
a geodesic ray, which contradicts our assumption.

The converse also holds: if the action on $\partial\bbH^n$ is free, then
$\bbH^G$ is bounded. We will prove this claim by contradiction. Suppose that
$\bbH^G$ is unbounded. Then fix a point $x_0\in\bbH^G$ and, for
any $m>0$, there is $x_m\in\bbH^G$ so that $d(x_0,x_m)\ge m$. Consider the
geodesic rays $c_m$ issuing from $x_0$ and passing through $x_m$. Then $c_m(\infty)$
is a sequence in $\partial\bbH^n$, thus it has a convergent subsequence. Since
$c_m$ passes through the fixed points $x_0$ and $x_m$, it follows that $gc_m=c_m$
for all $g$ and $m$.

To simplify the notations, we assume that $\{c_m(\infty)\}$ converges to a point
$\xi\in\partial\bbH^n$. Since $G$ acts freely on the boundary, $g\xi\ne\xi$ if
$g$ is not the identity in $G$. Choose $c$ the geodesic ray issuing from $x_0$ so
that $c(\infty)=\xi$. Then, for any $\varepsilon>0$, there is $T_\varepsilon>0$
such that $d\bigl(c(t),gc(t)\bigr)>\varepsilon$ for all $t\ge T_\varepsilon$.

Choose $\varepsilon=1$, so $d\bigl(c(t),gc(t)\bigr)>1$ for all $t$ bigger or equal to
some $T_1$. Fix $\tau>T_1$ and choose $m$ so
that $c_m(\infty)\in U(c,\tau,\frac{1}{3})$. Then
\begin{gather*}
d\bigl(c(\tau),gc(\tau)\bigr)\le d\bigl(c(\tau),c_m(\tau)\bigr)+
d\bigl(c_m(\tau),gc_m(\tau)\bigr)\\
+d\bigl(gc_m(\tau),gc(\tau)\bigr)\le\frac{1}{3}+\frac{1}{3}<1,
\end{gather*}
which is the desired contradiction. One can slightly adjust this argument to prove
the following:

\begin{proposition}
$\bbH^G$ is coarsely dense in $\bbH^G_k$, for any $k>0$.
\end{proposition}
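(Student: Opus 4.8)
The plan is to prove something slightly stronger than coarse density: that there is a single constant $C = C(k, |G|)$ with $d(x, \bbH^G) \le C$ for every $x \in \bbH^G_k$. Rather than run a limiting/boundary argument, I would work directly with the nearest--point projection onto the fixed set. Since each $g \in G$ is an isometry of $\bbH^n$, its fixed set is totally geodesic, so $\bbH^G = \bigcap_g \mathrm{Fix}(g)$ is a complete convex (totally geodesic) subspace; as $\bbH^n$ is a Hadamard manifold, the nearest--point projection $\pi\colon \bbH^n \to \bbH^G$ is well defined. The first step is to record that $\pi$ is $G$-equivariant and, since $\bbH^G$ is fixed pointwise, satisfies $\pi(gx) = g\pi(x) = \pi(x)$ for all $g \in G$; in particular $x$ and $gx$ share the same foot point $p := \pi(x)$, and $d(gx,p) = d(x,p)$.

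Next I fix $x \in \bbH^G_k$, set $p = \pi(x)$ and $D = d(x, p) = d(x, \bbH^G)$, and examine the (isosceles) geodesic triangle with apex $p$ and vertices $x, gx$. Because $g$ is an isometry fixing $p$, we have $[p, gx] = g\cdot[p,x]$, so the initial direction of $[p,gx]$ at $p$ is $dg_p(v)$, where $v \in T_p\bbH^n$ is the initial unit direction of $[p,x]$; hence the angle at $p$ equals the angle between $v$ and $dg_p(v)$ in the isotropy representation of $G$ on $T_p\bbH^n$. The key geometric input is that $v$ points orthogonally off the fixed set: since $p$ is the foot of the perpendicular from $x$ to the totally geodesic $\bbH^G$, the vector $v$ lies in $(T_p\bbH^G)^\perp$, and $T_p\bbH^G$ is exactly the $G$-fixed subspace of $T_p\bbH^n$. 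Thus $v$ is a unit vector in a subrepresentation $W$ with $W^G = 0$.

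The heart of the argument is a uniform lower bound on the displacement angle, obtained by averaging. For any unit $v \in W$ the vector $\frac{1}{|G|}\sum_{g} dg_p(v)$ is $G$-fixed, hence zero, so $\sum_g dg_p(v) = 0$; pairing with $v$ and isolating the $g=e$ term gives $\sum_{g \ne e}\langle v, dg_p(v)\rangle = -1$, whence some $g$ satisfies $\langle v, dg_p(v)\rangle \le -\tfrac{1}{|G|-1}$. Therefore the angle $\gamma$ at $p$ between $[p,x]$ and $[p,gx]$ obeys $\cos\gamma \le -\tfrac{1}{|G|-1} < 0$, a bound depending only on $|G|$. Feeding this into the hyperbolic law of cosines for the triangle $p, x, gx$ (legs $D$, base $d(x,gx)\le k$) yields
\[
\cosh d(x, gx) = \cosh^2 D - \sinh^2 D \cos\gamma \ge \cosh^2 D,
\]
using $\cos\gamma < 0$. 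Since $d(x,gx)\le k$, this forces $\cosh^2 D \le \cosh k$, so $D$ is bounded by a constant depending only on $k$. As $x$ was an arbitrary point of $\bbH^G_k$, the fixed set $\bbH^G$ is coarsely dense in $\bbH^G_k$.

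The step I expect to be the main obstacle is justifying the geometric facts behind the angle computation --- that $\bbH^G$ is totally geodesic with $T_p\bbH^G = (T_p\bbH^n)^G$, and that the projection geodesic meets $\bbH^G$ orthogonally --- together with the uniformity of the angle bound; once the representation--theoretic averaging is in place, the law--of--cosines estimate is routine. This also clarifies the author's parenthetical ``slightly adjust'': the same conclusion can be reached by the contradiction method used above (a sequence $x_m \in \bbH^G_k$ escaping $\bbH^G$ would, by bounded $G$-displacement, subconverge to a $G$-fixed point at infinity, forcing $\bbH^G$ to contain a ray), but extracting a uniform constant that way is considerably less direct than the projection estimate.
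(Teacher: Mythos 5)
Your argument is correct, and it takes a genuinely different route from the paper. The paper argues by contradiction using the compactness of the boundary sphere $\partial\bbH^n$ and the cone topology: a sequence $x_m\in\bbH^G_k$ escaping $\bbH^G$ would give rays whose endpoints subconverge to a point $\xi\in\partial\bbH^n$ not fixed by $G$, and the divergence $d\bigl(c(t),gc(t)\bigr)\to\infty$ along the ray to $\xi$ contradicts the bounded displacement of the $x_m$. Your proof instead is a direct quantitative estimate: equivariance of the nearest-point projection onto the totally geodesic set $\bbH^G$, the identification $T_p\bbH^G=(T_p\bbH^n)^G$, the averaging trick $\sum_g dg_p(v)=0$ in the orthogonal complement to force an obtuse angle at the foot point, and the hyperbolic law of cosines. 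All the steps check out (the only degenerate case, $x\in\bbH^G$, is vacuous, and in the end you only use $\cos\gamma\le 0$, so the uniform bound $-1/(|G|-1)$ is a bonus rather than a necessity). What your approach buys is an explicit constant $D\le\operatorname{arccosh}\sqrt{\cosh k}$ depending only on $k$, and it sidesteps the paper's somewhat delicate parenthetical claim about the limit point $\xi$ not being $G$-fixed. What it costs is generality: the isotropy representation and the exact law of cosines are Riemannian tools, so your proof does not transfer verbatim to the proper $\mathrm{CAT}(0)$ setting that the paper's subsequent remark relies on, whereas the boundary-compactness argument does. (If you wanted a metric argument in that generality, note that the orbit $Gx$ has diameter at most $k$, so its circumcenter is a $G$-fixed point within distance $k$ of $x$; this gives coarse density in any complete $\mathrm{CAT}(0)$ space even more directly.)
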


\begin{proof}
Suppose that, for some $k$, $\bbH^G$ is not coarsely dense in $\bbH^G_k$.
Thus, for any $m\ge1$, there is $x_m\in\bbH^G_k$ so that $d(x_m,\bbH^G)\ge m$.
Fix $x_0\in\bbH^G$ and let $c_m$ be the geodesic ray issuing from $x_0$
and passing through $x_m$. Then $c_m(\infty)$ is a sequence in $\partial\bbH^n$,
thus it has a convergent subsequence.

Without loss of generality, we can assume that $\{c_m(\infty)\}$ converges to a point
$\xi\in\partial\bbH^n$ which is not fixed by $G$ (if it were, it would be
represented by a geodesic ray contained in $\bbH^G$, which is clearly false). Let
$c$ be the geodesic ray issuing from $x_0$ so that $c(\infty)=\xi$. Then, for any
$\varepsilon>0$, there is $T_\varepsilon>0$ such that
$d\bigl(c(t),gc(t)\bigr)>\varepsilon$ for all $t\ge T_\varepsilon$.

Choose $\varepsilon=k+1$, so $d\bigl(c(t),gc(t)\bigr)>k+1$ for all $t$ bigger or equal
to some $T_1$. Fix $\tau>T_1$ and choose $m$ so
that $x_m=c_m\bigl(d(x_0,x_m)\bigr)\in U(c,\tau,\frac{1}{3})$. Then
$d\bigl(c_m(\tau),gc_m(\tau)\bigr)\le d(x_m,gx_m)\le k$ and
\begin{gather*}
d\bigl(c(\tau),gc(\tau)\bigr)\le d\bigl(c(\tau),c_m(\tau)\bigr)+
d\bigl(c_m(\tau),gc_m(\tau)\bigr)\\
+d\bigl(gc_m(\tau),gc(\tau)\bigr)\le\frac{1}{3}+k+\frac{1}{3}<k+1,
\end{gather*}
which is a contradiction.
\end{proof}

It follows that the $G$-action on $\bbH^n$ is tame and the $G$-action on
$\partial\bbH^n$ is free iff $\bbH^G$ is bounded and a point at
infinity is fixed iff it is a limit point of
$\bbH^G$.

\begin{remark}
The above argument works for any proper $\text{CAT}(0)$ space $X$ because it has compact
boundary (see \cite[p.~264]{bridson-haefliger1}). We conclude that the action of any finite subgroup $G$ of $\text{Isom}(X)$ is tame.
\end{remark}

\section{Asymptotic dimension}\label{dim-asym}

The notion of asymptotic dimension was introduced by Gromov in \cite[p.~29]{gromov1993} 
and is a coarse geometry analogue to the topological covering dimension of a compact metric
space. This section describes the basic properties of spaces with finite asymptotic dimension (for more information, see  Dranishnikov \cite[\S 4]{dranishnivkov_2000}, Roe \cite[Chap.~3]{roe3},  or Bell-Dranishnikov \cite{bell-dranishnikov_2008}).

\begin{definition}
We say that $X$ has \emph{asymptotic dimension} $\le l$ if for each $r>0$ the space $X$
can be decomposed into a union of $l+1$ subsets
$$ X=\bigcup_{k=0}^l X_k, $$
where each $X_k$ is \emph{r-disconnected}: each $X_k$ is a disjoint union of sets of
uniformly bounded diameter, and these sets are at least $r$ apart from each other
$\bigl($where $dist(A_1, A_2)=\inf\{d(x_1, x_2)\vv \forall x_1\in A_1,\ x_2\in A_2\}\bigr)$.
\end{definition}

If $Y\subseteq X$, then $\asdim Y\le\asdim  X$. Using the definition,
one can check that $\asdim  \bbR\le1$ and $\asdim  \bbR^2\le2$.
In fact, one can show that $\asdim \bbR^n=n$ (that means
$\asdim \bbR^n\le n$ but $\asdim \bbR^n\not\le n-1$).

Before stating the main result of this section, we need to review some facts about
simplicial complexes (see Bredon \cite{bredon1}, Chapter III, Section 1).

An \emph{abstract simplicial complex} is a set $K$,
whose elements are called vertices, together with a
collection of finite nonempty subsets of $K$, called
simplices such that:
\begin{itemize}
\item[(a)] every vertex is contained in some simplex,
\item[(b)] every nonempty subset of a simplex is a simplex.
\end{itemize}
The \emph{dimension} of a simplicial complex $K$ is $n$ if
$K$ contains an $n$-simplex, but no $(n+1)$-simplices
or $\infty$ if $K$ contains $n$-simplices for any $n\ge 0$.

A \emph{simplicial map} $f \colon K_1\to K_2$ is a function
from the vertices of $K_1$ to the vertices of $K_2$ such
that the image of any simplex of $K_1$ is a simplex
of $K_2$. Two simplicial maps $f, f'\colon K_1\to K_2$ are
\emph{contiguous} if, for any simplex $s\in K_1$, $f(s)$
and $f'(s)$ belong to a common simplex of $K_2$.
Two contiguous maps induce the same map in homology.

Let $H$ be the Hilbert space $\ell^2(K)$. Define a map
$K\to H$ by sending any vertex $v\in K$ to the corresponding
unit vector $e_v\in H$. For any simplex $s=(v_0,\dots ,v_n)$
of $K$, its \emph{geometric realization} (or the
\emph{closed simplex}) $|s|$ is the subset of $H$
consisting of all convex combinations of
$e_{v_0},\dots ,e_{v_n}$ (all linear combinations
$\Sigma\lambda_ve_v$ with positive coefficients such
that $\Sigma\lambda_v=1$). The \emph{geometric realization}
(or the \emph{polyhedron}) $|K|$ of $K$ is the union
of the geometric realizations of its simplices. The
induced metric $d_B$ on $|K|$ is called the metric
of \emph{barycentric coordinates}. The \emph{length metric} 
(see \cite[p.~33]{bridson-haefliger1})
associated to the metric of barycentric coordinates is called the
\emph{intrinsic metric} on $K$.

A simplicial complex is \emph{locally finite} if any
vertex belongs to finitely many simplices.
The \emph{barycentric subdivision} of a simplicial
complex $K$ is the simplicial complex $K'$ whose vertices
are the simplices of $K$ and whose simplices are the
sets $(s_0,\dots, s_n )$ of vertices of $K'$ (simplices
of $K$), such that, after reordering,
$$ s_0\subset s_1\subset\dots\subset s_n$$ (each $s_i$
is a face of $s_{i+1}$). There is a canonical homeomorphism
$ |K'|\approx |K|, $
(see \cite{eilenberg-steenrod1}, Chapter II, Lemma 6.2).

Let $G$ be a finite group which acts on $K$ such that each
transformation is a simplicial map. Such an action is
called a \emph{simplicial action} and $K$ with a simplicial
action is called a \emph{simplicial $G$-complex}.

\begin{definition}\label{regular complex}
A simplicial $G$-complex $K$ is called \emph{regular}
if it satisfies the following condition:
\begin{itemize}
\item[(A)] for any subgroup $H\subseteq G$, if
$h_0, \dots ,h_n$ are in $H$ and $(v_0, \dots ,v_n)$
and $(h_0v_0, \dots ,h_nv_n)$ are simplices of $K$,
then there is an element $h\in H$ such that $hv_i=h_iv_i$,
for all $i$.
\end{itemize}
\end{definition}

\noindent The condition (A) implies the following equivalent conditions:
\begin{itemize}
\item[(B\hphantom{$^\prime$})] for $v\in K$ and $g\in G$, if $v$ and $gv$ belong
to the same simplex, then $v=gv$;
\item[(B$^\prime$)] for any simplex $s$ of $K$ and $g\in G$,
$g$ fixes every vertex in $s\cap g(s)$.
\end{itemize}

\begin{proposition}[\cite{bredon1}] If $K$ is a simplicial $G$-complex, then the
induced action on the barycentric subdivision $K'$ satisfies
\textup{(B)}. If the action on $K$ satisfies \textup{(B)}, then the action on
$K'$ is regular.
\end{proposition}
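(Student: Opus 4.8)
The plan is to establish the two assertions separately, relying only on the fact that each $g \in G$ acts as a simplicial automorphism of $K$ --- so it is a bijection on vertices and preserves the dimension of every simplex --- together with the description of a simplex of $K'$ as a flag $s_0 \subsetneq s_1 \subsetneq \dots \subsetneq s_n$ of faces of $K$.

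For the first assertion I would argue as follows. A vertex of $K'$ is a simplex $w$ of $K$. If $w$ and its translate $gw$ lie in a common simplex of $K'$, then they occur together in one flag and are therefore comparable under inclusion, say $w \subseteq gw$. Since $g$ preserves dimension, $w$ and $gw$ have the same number of vertices, so the inclusion is forced to be an equality and $w = gw$; this is precisely condition \textup{(B)} for the induced action on $K'$.

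For the second assertion, assume \textup{(B)} holds on $K$ and verify \textup{(A)} for $K'$. Fix a subgroup $H \subseteq G$ and elements $h_0, \dots, h_n \in H$ so that $(w_0, \dots, w_n)$ and $(h_0 w_0, \dots, h_n w_n)$ are simplices of $K'$; after reindexing, arrange $w_0 \subsetneq \dots \subsetneq w_n$. Because each $h_i$ preserves dimension, the simplices $h_i w_i$ have strictly increasing dimension, and the top-dimensional member of a flag contains all the others, so $h_i w_i \subseteq h_n w_n$ and $w_i \subseteq w_n$ for every $i$. The crux is then to show that the \emph{single} element $h := h_n$ works for the whole flag. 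For a vertex $v$ of $w_i$, both $v$ and $(h_n^{-1}h_i)\,v$ are vertices of the one simplex $w_n$ (the latter because $h_n^{-1}(h_i w_i) \subseteq w_n$), so condition \textup{(B)} applied to the element $h_n^{-1}h_i \in H \subseteq G$ forces $(h_n^{-1}h_i)\,v = v$, that is $h_i v = h_n v$. Letting $v$ range over the vertices of $w_i$ gives $h_i w_i = h_n w_i$ for all $i$, which is exactly condition \textup{(A)}, so $K'$ is regular.

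I expect the main obstacle to be recognizing that the same group element --- the one attached to the top simplex $w_n$ of the flag --- can be used simultaneously for every $w_i$; once this is guessed, condition \textup{(B)} closes the argument at a stroke. The remaining points are routine bookkeeping: checking that the two flags are compatibly ordered (forced by dimension-preservation together with the total ordering of a flag by inclusion) and noting that $h_n^{-1}h_i$ lands in $G$, so that \textup{(B)} indeed applies.
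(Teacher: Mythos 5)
Your proof is correct. The paper itself does not give an argument but simply cites Bredon (Chapter III, Proposition 1.1), and your proof --- reducing (A) for $K'$ to (B) for $K$ by ordering both flags by dimension and taking $h=h_n$, the element attached to the top simplex --- is precisely the standard argument found there.
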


\begin{proof}
See Bredon \cite{bredon1}, Section III, Proposition 1.1.
\end{proof}

Let $f \colon X\to |K|$ be a map from $X$ to the polyhedron of a simplicial complex $K$. We say
that $f$ is \emph{uniformly cobounded} if there is a uniform finite bound for the diameter
of the inverse image under $f$ of the star of any vertex of $K$.

The \emph{degree} of a covering $\cU $ is the maximum number of members of
$\cU $ with nonempty intersection.

The following result gives different characterizations for spaces with
finite asymptotic dimension. The definition of a ``coarsening system" is given in the next section (see Definition \ref{coarsening}). 
\begin{theorem}[\cite{roe3}]\label{finit-cech}
Let $X$ be a proper metric space. Then the following are equivalent:
\begin{itemize}
\item[(a)] $X$ has asymptotic dimension $\le l$;
\item[(b)] $X$ admits a coarsening system consisting of coverings of degree $\le l+1$;
\item[(c)] For any $\varepsilon >0$ there is an $\varepsilon$-Lipschitz and uniformly
cobounded map from $X$ to an $l$-dimensional polyhedron equipped with the metric of
barycentric coordinates.
\end{itemize}
 If $X$ is a geodesic space, these conditions are also equivalent to
\begin{itemize}
\item[(d)] For any $\varepsilon >0$ there is an $\varepsilon$-Lipschitz and
effectively proper
map from $X$ to an $l$-dimensional polyhedron equipped with the intrinsic metric.
\end{itemize}
\end{theorem}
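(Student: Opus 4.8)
The plan is to establish the cycle (a) $\Rightarrow$ (c) $\Rightarrow$ (a) together with the reformulation (b), and then treat the geodesic refinement (d) separately. Since the statement is due to Roe, I would exhibit the two nerve constructions explicitly and defer the sharpest metric estimates to \cite[Chap.~3]{roe3}.

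First, for (a) $\Rightarrow$ (c): given $\varepsilon > 0$, I would choose a large scale $r$ (to be fixed in terms of $\varepsilon$) and use $\asdim X \le l$ to write $X = \bigcup_{k=0}^l X_k$ with each $X_k$ an $r$-disconnected disjoint union of pieces of diameter $\le D$. The collection of all these pieces, thickened to open sets with Lebesgue number comparable to $r$, forms an open cover $\cU$ of degree $\le l+1$, so its nerve $K$ is an $l$-dimensional complex. A partition of unity subordinate to $\cU$ whose functions are Lipschitz with constant of order $1/r$ defines a map $f \colon X \to |K|$ into barycentric coordinates, and choosing $r$ large relative to $\varepsilon$ makes $f$ be $\varepsilon$-Lipschitz. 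Uniform coboundedness is immediate, since the preimage of the open star of a vertex lies in the corresponding cover element, of diameter bounded in terms of $D$ and $r$.

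For (c) $\Rightarrow$ (a) the main point is a separation-by-colouring argument. Passing to the barycentric subdivision $K'$ (whose realization is canonically identified with $|K|$, distorting the barycentric metric by a factor depending only on $l$), I would colour each vertex of $K'$, that is, each simplex of $K$, by its dimension, giving colours $0, 1, \dots, l$. Since a simplex of $K'$ is a flag $s_0 \subset \dots \subset s_n$ of strictly increasing dimensions, two vertices of the same colour never share a simplex; hence the regions where a fixed barycentric coordinate exceeds $1/(l+1)$ still cover $|K'|$, yet those attached to like-coloured vertices are pairwise disjoint and separated by a definite distance $\delta = \delta(l) > 0$. Pulling these $l+1$ colour classes back through $f$ yields a decomposition $X = \bigcup_{k=0}^l X_k$; uniform coboundedness bounds the diameters of the constituent pieces, while the $\varepsilon$-Lipschitz bound converts the $|K'|$-separation $\delta$ into an $X$-separation of at least $\delta/\varepsilon$. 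Taking $\varepsilon$ small then makes each $X_k$ be $r$-disconnected, so $\asdim X \le l$. The equivalence with (b) is a matter of running the construction of the previous paragraph along a cofinal sequence of scales $r_n \to \infty$ and checking that the resulting nerves assemble into a coarsening system (Definition \ref{coarsening}) of degree $\le l+1$; conversely, the nerves of such a system supply the maps of (c).

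I expect the geodesic refinement (c) $\Leftrightarrow$ (d) to be the main obstacle, since it requires trading the metric of barycentric coordinates $d_B$ for the intrinsic length metric $d_I$ (which dominates $d_B$ but can be far larger) and simultaneously trading \emph{uniformly cobounded} for the global condition \emph{effectively proper}. The forward implication is the delicate one: a geodesic in $X$ maps under an $\varepsilon$-Lipschitz $f$ to a path in $|K|$ of controlled $d_B$-length, and because $X$ is geodesic this path estimate can be promoted to a bound on $d_I\bigl(f(x), f(y)\bigr)$, keeping $f$ Lipschitz for $d_I$; effective properness then follows because a point at large intrinsic distance from $f(x_0)$ can only be reached by crossing many stars, each with bounded-diameter preimage, which in a geodesic space forces the preimage of an intrinsic ball to be bounded. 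The converse is easier, as $d_B \le d_I$ makes an intrinsic-metric map automatically $d_B$-Lipschitz, and effective properness trivially implies uniform coboundedness. I would carry out the constructions as above and cite \cite[Chap.~3]{roe3} for the precise form of these final estimates.
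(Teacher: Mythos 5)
The paper gives no argument of its own here: its entire ``proof'' is the citation to Roe \cite[Theorem 9.9]{roe3}, so there is no internal proof to compare against, and your proposal should be judged as a reconstruction of the standard argument. On that score the main ideas are right: thickening an $r$-disconnected decomposition into a degree-$(l+1)$ cover whose nerve carries an $O(1/r)$-Lipschitz partition of unity gives (a)$\Rightarrow$(c); colouring the vertices of the barycentric subdivision by the dimension of the corresponding simplex, taking the regions where a fixed barycentric coordinate exceeds $1/(l+1)$, and pulling back through an $\varepsilon$-Lipschitz uniformly cobounded map gives $l+1$ classes of uniformly bounded pieces separated by $\delta(l)/\varepsilon$, which is (c)$\Rightarrow$(a); and running the first construction over scales $r_n\to\infty$ produces the anti-\v{C}ech system of (b). The one place your sketch is thinner than it reads is the effective-properness half of (c)$\Rightarrow$(d): you argue that a short intrinsic path in $|K|$ crosses boundedly many stars, each with bounded preimage, but a chain of overlapping stars in $|K|$ need not pull back to a chain of overlapping subsets of $X$, since the path may run through parts of the polyhedron not in the image of $f$. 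The chain argument genuinely works only for the nerve map of an actual cover, where adjacency of vertices means the cover sets themselves intersect in $X$; the clean route is therefore to prove (a)$\Rightarrow$(d) directly for the constructed nerve map rather than to upgrade an arbitrary map satisfying (c). Since you explicitly defer these final estimates to \cite[Chap.~3]{roe3}, and the paper defers the entire theorem there, this is a presentational weakness rather than a fatal gap, but it is worth knowing that this is exactly where the geodesic hypothesis and the cover structure both get used.
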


\begin{proof}
See Roe \cite[Theorem 9.9]{roe3}.
\end{proof}

\section{Coarsening systems and Coarse homology}\label{relativ}

In this section we will recall the definition of  coarse homology \cite{roe1}, \cite{roe3}, \cite{higson-roe1}. All metric spaces considered are proper.

\begin{definition}\label{def: uniform}
A  covering $\cU$
of $X$ with the property that any $U\! \in \cU$ is relatively compact and
any bounded subset of $X$ intersects only finitely many sets in $\cU $
is called a \emph{uniform covering} of $X$.
\end{definition}

It follows that any uniform covering is locally finite.
Let $\cU $ be a uniform covering of $X$ and let $\{\varphi_U\vv U\in\cU \}$
be a partition of unity subordinate to $\cU $ (which exists because, by Stone's
theorem, any metric space is paracompact).
Let $K(\cU )$ be the nerve of $\cU $ and regard
its geometrical realization $|K(\cU )|$ as a subspace of the Hilbert space
$H=\ell^2(\cU )$. Define a map $\Phi\colon X\to|K(\cU )|$ by
$$ \Phi(x)=\sum_U \varphi_U(x)e_U, $$
where $e_U$ is the unit vector of $H$ associated to $U\in\cU $.
The maps $\{\varphi_U\}$ are continuous, so the map $\Phi$ is also continuous:
if $x_n\to x$ in $X$, then $\varphi_U(x_n)\to\varphi_U(x)$, for any $U$,
thus $\Phi(x_n)\to\Phi(x)$.

\begin{definition}[{Roe \cite[3.13]{roe1}}]\label{coarsening}
A \emph{coarsening system} of $X$ (or an
\emph{anti-\v Cech system} of $X$) is a sequence
of uniform coverings $\{\cU_n\}$ of $X$ for which there exists an increasing
sequence of real numbers $R_n \to \infty$ such that for all $n$,
\begin{itemize}
\item[(a)] each set $U\!\in \cU_n$ has diameter less than or equal to
$R_n$,
\item[(b)] the covering $\cU_{n+1}$ has a Lebesgue number greater than or
equal to $R_n$.
\end{itemize}
\end{definition}

Any proper metric space admits a coarsening system $\{\cU_n\}$ (see Roe \cite[Lemma 3.15]{roe1}). 
It follows that the covering $\cU_n$ is a refinement of $\cU_{n+1}$,
so there exist refinement projections $\beta_n\colon \cU_n\to\cU_{n+1}$,
for any $n$. These maps are called \emph{coarsening maps}. From now on, we
will include a choice of such maps as part of a coarsening system. 

If $\cU$ is a covering of $X$, one can define a simplicial complex
called the nerve of $\cU$ and denoted by $K(\cU)$. The vertices are the
 sets $U$ of the covering and the simplices are finite non-empty subsets
$\{U_0,\dots ,U_n\}$ of $\cU$ with non-empty intersection.
If the nerve of a covering is locally finite, then the covering is
locally finite, but the converse is not true. However, if the covering is uniform,
then its nerve is a locally finite simplicial complex.

The coarsening maps $\beta_n$ induce proper simplicial maps
$K(\cU_n)\to K(\cU_{n+1})$.
A different choice of the coarsening maps induces contiguous maps.

The \emph{locally finite} homology of a simplicial complex $K$ is defined
using chains that are infinite, locally finite formal linear combinations
of oriented simplices of $K$.
For example, consider the 1-dimensional simplicial complex $K$ whose set of vertices
is $\bZ$ and the simplices are of the form $\{n,n+1\}$ for any $n\in\bZ$
(it follows that $|K|=\bbR$). The 0-dimensional locally finite homology
group is trivial (each vertex is the boundary of an infinite 1-chain consisting
of all the simplices to the left of it), whereas the 1-dimensional locally finite
homology group is non-trivial (the sum of all the 1-simplices is a generator). In fact,
for any field $F$, it is true that 
\eqncount
\begin{equation}\label{euclidean space}
\HLF_q(\bbR^n;F)=\begin{cases}
F& \text{if }q=n,\\
0& \text{otherwise}.
\end{cases} 
\end{equation}

\begin{definition}[{Roe \cite[\S 5.5]{roe3}}]
Let $\{\cU_n\}$ be a coarsening system of $X$ and
let $\HLF_*$ be the locally finite homology theory. Then the
\emph{coarse homology} of $X$ is given by:
$$ \ch_*(X)=\varinjlim \HLF_*\bigl(K(\cU_n)\bigr).$$
\end{definition}

Let $A\subset X$ and denote by $K(\cU_n|A)$ the subcomplex of $K(\cU_n)$
consisting of those simplices $(U_0,\dots , U_q)$ such that
$$U_0\cap \dots \cap U_q \cap A\neq \emptyset.$$

One can check that the complex $K(\cU_n|A)$ is isomorphic to the nerve of
$A\cap\cU_n=\{A\cap U\vv U\in\cU_n\ \text{and}\ A\cap U\ne\emptyset\}$.
The coverings $\{A\cap\cU_n\}$ form a coarsening system of $A$ (they have
increasing Lebesgue number and sets with uniformly bounded diameter), thus
$$\ch_*(A)=\varinjlim \HLF_*\bigl(K(\cU_n|A)\bigr).$$

\begin{definition}
If $A\subset X$ and $\{\cU_n\}$ is a coarsening system of $X$, then the
\emph{relative coarse homology} is defined by
$$\ch_*(X,A)=\varinjlim \HLF_*\bigl(K(\cU_n), K(\cU_n|A)\bigr).$$
\end{definition}

Two different coarsening systems of $X$ will give
rise to canonically isomorphic coarse homology groups, therefore the coarse homology is
independent of the coarsening system (see \cite[p.~229]{higson-roe1}).

\begin{proposition}[\cite{higson-roe1}]
Two close maps $f,f'\colon X\to Y$ induce the same map on $\ch_*$.
\end{proposition}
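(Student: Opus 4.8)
The plan is to reduce the statement to the fact, already recalled in the excerpt, that two \emph{contiguous} simplicial maps induce the same map on locally finite homology. First I would recall how a coarse map $f\colon X\to Y$ induces a map on $\ch_*$. Fix coarsening systems $\{\cU_n\}$ of $X$ and $\{\cV_n\}$ of $Y$, with diameter/Lebesgue bounds $R_n\to\infty$. Since $f$ is bornologous, each $U\in\cU_n$ satisfies $\mathrm{diam}\,f(U)\le S_{R_n}$, so for $m=m(n)$ large enough (so that the Lebesgue number of $\cV_m$, which is at least $R_{m-1}$, exceeds $S_{R_n}$) the set $f(U)$ lies in some member of $\cV_m$. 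Choosing one such member $\phi(U)\supseteq f(U)$ for each $U$ defines a vertex map $\phi\colon K(\cU_n)\to K(\cV_m)$. It is simplicial, because any point of $\bigcap_i U_i$ is carried by $f$ into $\bigcap_i \phi(U_i)$; and it is proper, hence induces a map on $\HLF_*$, because $f$ is metrically proper (so $f^{-1}(V)$ is bounded for relatively compact $V$) and the coverings are uniform (so only finitely many $U$ meet a bounded set). Passing to the direct limit gives $f_*\colon\ch_*(X)\to\ch_*(Y)$.

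The key preliminary lemma is that $f_*$ is independent of the choice of vertex map $\phi$. If $\phi$ and $\tilde\phi$ both satisfy $f(U)\subseteq\phi(U)$ and $f(U)\subseteq\tilde\phi(U)$, then for any simplex $(U_0,\dots,U_q)$ of $K(\cU_n)$ and any $x\in\bigcap_i U_i$, the point $f(x)$ lies in every $\phi(U_i)$ and every $\tilde\phi(U_i)$. Hence all of these vertices span a common simplex of $K(\cV_m)$, so $\phi$ and $\tilde\phi$ are contiguous and induce the same map on $\HLF_*$, and therefore the same map in the limit.

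With this in hand the proposition becomes almost immediate. Suppose $d_Y\bigl(f(x),f'(x)\bigr)\le K$ for all $x$. For $U\in\cU_n$ and any two points of $f(U)\cup f'(U)$, the triangle inequality together with bornologousness of $f$ and closeness gives a distance bound $\le S_{R_n}+2K$, so $f(U)\cup f'(U)$ has uniformly bounded diameter. Choosing $m$ large enough that this bound lies below the Lebesgue number of $\cV_m$, I can select a single member $\psi(U)\in\cV_m$ containing $f(U)\cup f'(U)$. Then $\psi$ is simultaneously a valid vertex map for $f$ (as $f(U)\subseteq\psi(U)$) and for $f'$ (as $f'(U)\subseteq\psi(U)$). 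By the independence lemma of the previous paragraph, $f_*=\psi_*=f'_*$ after passing to the limit.

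The main point to handle with care is the interplay of choices across the direct limit: I must check that enlarging $m$ by composing with the coarsening projections does not alter the class represented in $\varinjlim\HLF_*\bigl(K(\cV_n)\bigr)$, so that the vertex maps for $f$, for $f'$, and the common map $\psi$ can all be arranged to land in the same $\cV_m$ and be compared there. This compatibility is exactly what is built into the definition of $\ch_*$ as a direct limit, where different coarsening choices induce contiguous maps; it is the only genuinely technical ingredient. The geometric heart of the argument is simply that closeness forces $f(U)$ and $f'(U)$ into a common coarsening set.
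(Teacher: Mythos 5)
Your argument is correct and is essentially the standard proof of this fact: the paper itself gives no argument but simply cites Higson--Roe \cite[Proposition 2.2]{higson-roe1}, and the proof there proceeds exactly as you describe, using closeness to force $f(U)\cup f'(U)$ into a single member of a sufficiently coarse covering and then invoking contiguity. The one point worth making explicit in a full write-up is that the contiguity chain homotopy is proper (locally finite), so that it really does descend to $\HLF_*$; this holds because the prism operator only involves simplices spanned by $\phi(U_i)$ and $\tilde\phi(U_i)$, and your properness argument bounds these, so there is no gap.
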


\begin{proof}
See Higson-Roe \cite[Proposition 2.2]{higson-roe1}.
\end{proof}

\begin{remark}
An easy consequence of this proposition is that each coarse equivalence induces an
isomorphism in the coarse homology.
\end{remark}

\section{Regular $G$-coarsening systems}

In this section, we will define the appropriate equivariant version of a coarsening system, for $(X,d)$ a  coarse $G$-space (see Section \ref{sec: quasi}) and $G$ a finite group. A \emph{regular $G$-coarsening system} is the anti-\v Cech analogue of the systems of regular $G$-coverings constructed in Bredon \cite[III.6]{bredon1}. 
 Theorem \ref{regular coarsening system} shows the existence of such systems of  coverings under certain assumptions.

\begin{definition}\label{def: regular coarsening system}
 Let $X$ be a coarse $G$-space, for $G$ a finite group. We say that a  covering $\cU$ of $X$ is  is
\emph{$G$-invariant} or that $\cU$ is a $G$-\emph{covering}, provided that
\begin{enumerate}
\item $g\,\cU=\{ g\, U\vv U\!\in\cU\} = \cU$, for all $g\in G$.
\item $g_1(g_2U) = (g_1g_2)U$, for all $g_1, g_2 \in G$ and all $U \in \cU$.
\end{enumerate}
\end{definition}

If $G$ acts by isometries on $X$, then  $g\,\cU$ is also a covering of $X$ and the second condition is automatic. In that case, $\cU$ is a $G$-covering if and only if
$g\,\cU=\cU$ for all $g\in G$.  

If $X$ is a coarse $G$-space, there is a natural induced $G$-action on the
nerve  of a $G$-invariant covering,
$$ g(U_0,\dots,U_n)=(gU_0,\dots,gU_n), $$ 
making $K(\cU)$  a simplicial $G$-complex. We will always be working with uniform coverings (see Definition \ref{def: uniform}), so that the nerves $K(\cU)$ will also be locally-finite.

\begin{definition}
Let $\cU$ be a uniform $G$-invariant  covering of $X$. Then $\cU$ is a
\emph{regular} $G$-covering if its nerve $K(\cU)$ is a regular $G$-complex. A
\emph{regular $G$-coarsening system} for $X$ is coarsening system  $\{\cU_n\}$, such that  $\cU_n$ is a regular $G$-covering, for any $n\in\mathbb{N}$.
\end{definition}

We observe that the existence of a regular $G$-coarsening system is a coarse invariant.
\begin{lemma}\label{lem: coarsening}
 Suppose that $(X, d)$ and $(Y,d)$ are 
coarse $G$-spaces.
If $f \colon X\to Y$ is a coarse $G$-equivalence
and $Y$ admits a regular $G$-coarsening system, then so does $X$.
\end{lemma}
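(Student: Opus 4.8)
The plan is to transport the given regular $G$-coarsening system $\{\cV_n\}$ of $Y$ to $X$ by pulling it back along $f$ and then repairing the two features---exact $G$-invariance and regularity of the nerves---that a mere coarse equivalence does not preserve on the nose. As a preliminary reduction I would invoke Theorem \ref{kleiner-leeb} to replace the coarse $G$-actions on $X$ and $Y$ by coarsely $G$-equivalent isometric actions; working with isometric actions is convenient because then a covering $\cU$ is $G$-invariant precisely when $g\,\cU=\cU$ for every $g$, so that condition (ii) of Definition \ref{def: regular coarsening system} becomes automatic and each translate $g(\,\cdot\,)$ is distance preserving. After this reduction $f$ is still only \emph{coarsely} $G$-equivariant, with some constant $N$, and this fuzziness is the source of all the difficulty below.

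First I would build a coarsening system on $X$ by setting $\cU_n^{0}=\{\,f^{-1}(V)\mid V\in\cV_n\,\}$. Using that $f$ is a quasi-isometry, the lower bound in (\ref{quasi}) shows $\operatorname{diam} f^{-1}(V)\le \L(\operatorname{diam} V+\C)$, so the sets in $\cU_n^0$ have uniformly bounded diameter controlled by $R_n$; properness and the bornologous property show that $\cU_n^0$ is a uniform covering; and the upper (Lipschitz) bound in (\ref{quasi}) shows that the Lebesgue number of $\cU_n^0$ is at least $(R_{n-1}-\C)/\L$, which still tends to infinity. Hence $\{\cU_n^0\}$ is a coarsening system of $X$. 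To make it $G$-invariant I would symmetrize, replacing $f^{-1}(V)$ by the finite union $U_V=\bigcup_{g\in G} g\cdot f^{-1}(g^{-1}V)$; the identity $g_0\,U_V=U_{g_0V}$ (which uses that the action is isometric and that $\cV_n$ is $G$-invariant, so $g_0V$ again ranges over $\cV_n$) shows the family $\{U_V\}$ is genuinely $G$-invariant. Coarse equivariance gives $g\cdot f^{-1}(g^{-1}V)\subseteq f^{-1}(N_N(V))$, so each $U_V$ still has diameter controlled by $R_n$ while $U_V\supseteq f^{-1}(V)$ keeps the Lebesgue number large; thus the symmetrized coverings $\cU_n$ form a $G$-invariant coarsening system of $X$.

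It remains to arrange regularity of the nerves $K(\cU_n)$, which are simplicial $G$-complexes but need not satisfy condition (B): translating by the group forced an $N$-thickening of the sets, and a pair $V,gV$ that is disjoint in $Y$ (hence ``disjoint or equal'' by regularity of $\cV_n$) may pull back to sets whose $N$-neighbourhoods overlap, violating the condition. To repair this I would pass to barycentric subdivisions: by the quoted result of Bredon (\cite[III.1.1]{bredon1}), for any simplicial $G$-complex the first barycentric subdivision satisfies (B) and, once (B) holds, the next subdivision is regular. At the level of coverings this corresponds to interleaving the subdivided coverings into the sequence $\{\cU_n\}$, keeping the diameter and Lebesgue-number bounds intact so that the coarse homology computed from the refined system is unchanged, which yields a regular $G$-coarsening system on $X$.

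The step I expect to be the main obstacle is precisely this last one: reconciling the unavoidable $N$-fuzziness of a coarse $G$-equivalence---which forces a thickening in order to obtain \emph{exact} $G$-invariance---with the \emph{exact} combinatorial regularity condition, which the thickening destroys. The delicate point is to check that the barycentric-subdivision repair can be carried out compatibly with the anti-\v Cech structure, i.e.\ realized by genuine refinements of uniform coverings with the required diameter and Lebesgue-number control, rather than merely at the abstract simplicial level; the verification that the diameter and Lebesgue-number estimates survive the symmetrization and the subdivision is routine once it is set up correctly.
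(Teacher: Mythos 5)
Your pullback-and-symmetrize construction of a $G$-invariant coarsening system on $X$ is fine as far as it goes, but the argument has two genuine gaps. First, the opening reduction is circular: to replace $Y$ by a coarsely $G$-equivalent isometric model and still have the hypothesis available, you must transport the given regular $G$-coarsening system from $Y$ to that model (and transport the conclusion from the isometric model of $X$ back to $X$), which is exactly the statement being proved. Indeed, the paper proves this lemma precisely in order to justify later invocations of Theorem \ref{kleiner-leeb}, so the lemma's own proof cannot begin with that reduction. The reduction is not dispensable for you, either: the identity $g_0U_V=U_{g_0V}$ on which your symmetrization rests needs $g_0(g\cdot A)=(g_0g)\cdot A$ for subsets $A$ of $X$, which holds for genuine actions but only up to bounded error for quasi-actions. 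Second, and more seriously, the final repair step is not routine. Barycentric subdivision is an operation on simplicial complexes; to realize it as a uniform covering of $X$ with controlled Lebesgue number one must pull back the open vertex stars of the subdivided nerve along a map $\Phi\colon X\to|K(\cU_n)|$ that is $\varepsilon$-Lipschitz and effectively proper, with nerves of uniformly bounded dimension. That is exactly the content of Theorem \ref{finit-cech}(d) and the proof of Theorem \ref{regular coarsening system}, and it requires $X$ to be a proper geodesic space of finite asymptotic dimension --- hypotheses the present lemma does not have. Without a uniform dimension bound, the Lebesgue number of the open-star covering of the subdivision tends to $0$, so the diameter and Lebesgue-number control you defer as ``routine'' can genuinely fail and the anti-\v Cech property is lost.

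Both difficulties are self-inflicted, because the paper never thickens the sets at all. Its proof takes $\cV_n=\{f^{-1}(U)\vv U\in\cU_n\}$ and transports the $G$-structure through the indexing bijection $U\mapsto f^{-1}(U)$, so that $K(\cV_n)$ is identified with a $G$-invariant subcomplex of the regular $G$-complex $K(\cU_n)$ (a simplex $(f^{-1}(U_0),\dots,f^{-1}(U_q))$ occurs exactly when $U_0\cap\dots\cap U_q$ meets $f(X)$), and condition (A) of Definition \ref{regular complex} is inherited by $G$-invariant subcomplexes. The metric properties of $f$ are needed only for uniformity, the diameter bounds, and the Lebesgue numbers, after which one passes to a subsequence to restore the anti-\v Cech condition. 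In other words, the exact combinatorial regularity you identify as the main obstacle never has to be reconciled with the $N$-fuzziness of $f$, because the regular $G$-complex is carried over combinatorially rather than rebuilt from literal $G$-translates in $X$. To salvage your approach you would either have to add the geodesic and finite-asymptotic-dimension hypotheses (at which point Theorem \ref{regular coarsening system} applies directly) or give up the insistence on set-level $G$-translates, as the paper does.
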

\begin{proof} If $\{\cU_n\}$ is a regular $G$-coarsening system for $Y$, then for each of the coverings $\cU_n$ we let $\cV_n = \{ f^{-1}(U) \vv U \in \cU_n\}$. From the metric properties of $f$ it is clear that each covering $\cV_n$ is a uniform $G$-covering, and that its nerve $K(\cV_n)$ is a regular $G$-complex. By passing to a subsequence of the coverings $\{\cV_n\}$, if necessary, we can obtain an anti-\v Cech system, and therefore a regular $G$-coarsening system for $X$.
\end{proof}

\medskip
We will now show that  any proper geodesic metric space $X$ with finite asymptotic dimension admits a
regular coarsening system, whose $G$-invariant coverings have nerves with uniformly bounded dimension. In the rest of this section, we assume that $G$ acts by isometries on $X$.

\begin{lemma}
Let $\cU$ be a uniform covering of $X$, and $G$ a finite group of isomtries of $X$. Then the collection
$$\widetilde{\cU}=\bigsqcup_{g\in G}g\,\cU=
\{g \, U\vv g\in G,\: U\!\in\cU\}$$
 is a $G$-invariant uniform covering of $X$.
\end{lemma}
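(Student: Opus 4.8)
The plan is to verify directly that the collection $\widetilde{\cU} = \bigsqcup_{g\in G} g\,\cU$ satisfies the three defining properties of a $G$-invariant uniform covering: it is a covering of $X$, it is uniform (each member relatively compact, every bounded set meeting only finitely many members), and it is $G$-invariant in the sense of Definition \ref{def: regular coarsening system}. Since $G$ acts by isometries and is finite, none of these should present serious difficulty; the work is entirely bookkeeping.

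First I would check that $\widetilde{\cU}$ covers $X$. This is immediate because $\cU \subseteq \widetilde{\cU}$ (taking $g = e$), and $\cU$ already covers $X$. Next, each member $g\,U$ is relatively compact: since $g$ is an isometry, it is a homeomorphism of $X$, and isometries carry relatively compact sets to relatively compact sets, so $g\,U$ is relatively compact whenever $U$ is. For the local finiteness condition, let $B \subseteq X$ be bounded. A set $g\,U$ meets $B$ if and only if $U$ meets $g^{-1}B$; since $g^{-1}$ is an isometry, $g^{-1}B$ is again bounded, so by uniformity of $\cU$ only finitely many $U \in \cU$ meet $g^{-1}B$. As $G$ is finite, the total number of pairs $(g, U)$ with $g\,U \cap B \neq \emptyset$ is finite, which is exactly the uniformity requirement for $\widetilde{\cU}$.

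Finally I would check $G$-invariance. For condition (i) of Definition \ref{def: regular coarsening system}, fix $h \in G$; then
$$
h\,\widetilde{\cU} = \{ h(g\,U) \vv g\in G,\ U\in\cU\} = \{ (hg)\,U \vv g\in G,\ U\in\cU\} = \widetilde{\cU},
$$
where the middle equality uses that $G$ acts by genuine isometries (so $h(gU) = (hg)U$), and the last equality holds because $g\mapsto hg$ is a bijection of $G$. Condition (ii), namely $g_1(g_2 U) = (g_1 g_2) U$, is automatic since $G$ acts by isometries, as already noted in the paragraph following Definition \ref{def: regular coarsening system}.

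The only point requiring mild care — and the closest thing to an obstacle — is the bookkeeping in the local finiteness argument, where one must track that passing from $B$ to the preimages $g^{-1}B$ and then summing over the finitely many $g\in G$ keeps the count finite. Because $G$ is finite and each $g$ is an isometry (hence preserves boundedness), this goes through cleanly, and no delicate estimate is needed. I would present the proof as a short sequence of these three verifications.
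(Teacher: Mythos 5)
Your proof is correct and follows essentially the same route as the paper's: the covering and invariance properties are immediate from the construction, and the uniformity is reduced to that of $\cU$ by pulling a bounded set back along each of the finitely many isometries $g^{-1}$. The only cosmetic difference is that you get relative compactness of $gU$ from $g$ being a homeomorphism, whereas the paper notes $\mathrm{diam}\,gU = \mathrm{diam}\,U$ and invokes properness of $X$; both are fine.
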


\begin{proof}
It is clear that the sets in $\widetilde{\cU}$
form a $G$-invariant covering by construction. Since $G$ acts by isometries,
$\text{diam}\,gU=\text{diam}\,U$. Thus the sets in $\widetilde{\cU}$ are
bounded subsets of a proper metric space, and therefore  relatively compact.

Let $A\subset X$ be a bounded set. Suppose that $A$ intersects infinitely many
sets from $\widetilde{\cU}$. Then, since $G$ is finite, there exists
$g\in G$ such that $A\cap gU_k \neq\emptyset$ for infinitely many sets
$U_k \in\cU$. Therefore the bounded set $g^{-1}(A)$ intersects infinitely
many sets from $\cU$, which contradicts that $\cU$ is uniform.
\end{proof}

If $\{\cU_n\}$ is a coarsening system
of $X$, then $\{\widetilde{\cU}_n\}$ is a sequence of
uniform $G$-coverings of $X$. We will show that the coverings
 $\{\widetilde{\cU}_n\}$  also form a coarsening system for $X$. If  $\asdim  X\le l$, we may assume that $\dim K(\cU_n)\leq l$ for any $n$ (see Theorem \ref{finit-cech}).

\begin{lemma}\label{inv.coars.syst}
If $\{\cU_n\}$ is a coarsening system
of $X$, the sequence $\{\widetilde{\cU}_n\}$ is a coarsening
system for $X$. Moreover, if $\dim  K(\cU_n)\le l$, then the covering $\{\widetilde{\cU}_n\}$ has degree  $\leq p(l+1)$, where $p=|G|$.
\end{lemma}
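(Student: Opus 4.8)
The plan is to verify the two anti-\v Cech conditions of Definition \ref{coarsening} directly, reusing the very same control sequence $\{R_n\}$ that exhibits $\{\cU_n\}$ as a coarsening system, and then to prove the degree bound by a counting argument organized over the elements of $G$. Uniformity of each $\widetilde{\cU}_n$ was already established in the preceding lemma, so only conditions (a) and (b) of the coarsening-system definition, together with the degree estimate, remain to be checked.

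For condition (a), I would note that every member of $\widetilde{\cU}_n$ has the form $gU$ with $g\in G$ and $U\in\cU_n$, and since $G$ acts by isometries, $\text{diam}\,gU = \text{diam}\,U \le R_n$; so the same bound $R_n$ controls all diameters. For condition (b), the key point is that $\cU_{n+1}\subseteq\widetilde{\cU}_{n+1}$ (take $g = e$). Enlarging a covering can only increase its Lebesgue number, since any ball already lying inside a member of $\cU_{n+1}$ a fortiori lies inside a member of the larger family; hence the Lebesgue number of $\widetilde{\cU}_{n+1}$ is at least that of $\cU_{n+1}$, which is $\ge R_n$. Thus $\{\widetilde{\cU}_n\}$ is a coarsening system with the same sequence $\{R_n\}$, and refinement projections $\widetilde{\cU}_n\to\widetilde{\cU}_{n+1}$ exist automatically, because each set in $\widetilde{\cU}_n$ has diameter $\le R_n$ and is therefore contained in some member of $\widetilde{\cU}_{n+1}$.

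For the degree estimate I would begin by recalling that $\dim K(\cU_n)\le l$ is equivalent to the statement that no point of $X$ lies in more than $l+1$ members of $\cU_n$, since a top-dimensional simplex of the nerve records a maximal family of members with a common point. Because each $g\in G$ acts as a bijective isometry, the translated family $g\,\cU_n$ has exactly the same intersection pattern as $\cU_n$, so no point lies in more than $l+1$ members of $g\,\cU_n$ either. Now fix $x\in X$: for each fixed $g$, the members $gU$ containing $x$ correspond bijectively to the members $U\in\cU_n$ containing $g^{-1}x$, of which there are at most $l+1$. Summing over the $p = |G|$ group elements, at most $p(l+1)$ pairs $(g,U)$ satisfy $x\in gU$, and since every member of $\widetilde{\cU}_n$ containing $x$ arises from at least one such pair, at most $p(l+1)$ distinct members contain $x$. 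This is exactly the assertion that $\widetilde{\cU}_n$ has degree $\le p(l+1)$.

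This argument is largely routine; the one place calling for a little care is that the indexed family $\bigsqcup_{g\in G} g\,\cU_n$ may exhibit coincidences $g_1 U_1 = g_2 U_2$ as actual subsets of $X$. Such coincidences only reduce the number of distinct members through a given point, so the estimate $p(l+1)$ survives as a valid (though generally non-sharp) upper bound on the degree.
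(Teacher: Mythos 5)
Your proposal is correct and follows essentially the same route as the paper: the diameter bound comes from $G$ acting by isometries, the Lebesgue number estimate comes from $\cU_{n+1}$ being a subfamily (hence a refinement) of $\widetilde{\cU}_{n+1}$, and the degree bound is the same pigeonhole count over the $p$ group elements, which the paper merely phrases contrapositively (assuming $p(l+1)+1$ sets meet and extracting $l+2$ translates by a single $g$). Your explicit remark about possible coincidences $g_1U_1=g_2U_2$ is a harmless refinement of the same argument.
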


\begin{proof}
We check the properties from the definition of a coarsening
system. For any $n\in\mathbb{N}$, $g\in G$ and $U\in
\cU_n$, we have that
$\mathrm{diam}\,gU=\mathrm{diam}\,U\le R_n$, so the sets in
$\widetilde{\cU}_n$ have uniformly bounded diameter. The
covering $\cU_n$ is a refinement of
$\widetilde{\cU}_n$, so the Lebesgue number of
$\widetilde{\cU}_n$ is at least $R_{n-1}$. Therefore $\{\widetilde{\cU}_n\}$ is a coarsening system.

Assume that $\dim K(\cU_n)\le l$,  and suppose  that there are at least $p(l+1)+1$ sets from $\widetilde{\cU}_n$ with
nonempty intersection. Then, for some $g\in G$, there are at least $l+2$ sets
$\{ U_1,\dots,U_q \}$ of $\cU_n$ such that $\bigcap gU_k \neq\emptyset$. Therefore 
$\bigcap U_k \neq\emptyset$, so the nerve $K(\cU_n)$ has dimension at least
$l+1$, which is impossible.
\end{proof}

So far, we have constructed a coarsening system consisting of $G$-invariant coverings whose nerves
have uniformly bounded dimension. To obtain a regular $G$-coarsening system, we need to
introduce some new notions (see \cite[p.~133]{bredon1}).

Let $\cU$ be a locally finite $G$-invariant
covering of $X$ and let $\phi =\{ \varphi_U\vv U\in\cU \}$ be a partition of
unity subordinate to $\cU$. Then $\phi$ is called a \emph{G-partition of unity}
if $\varphi_{gU}(gx)=\varphi_U(x)$, for all $g$, $x$ and $U$. If
$f=\{f_U\vv U\in\cU\}$ is any partition of unity subordinate to the
$G$-invariant covering $\cU$, we define a $G$-partition $\phi$ by putting
$$ \varphi_U(x)=\frac{1}{|G|}\sum_g f_{gU}(gx). $$
If each $f_U$ is $\varepsilon$-Lipschitz, then $\varphi_U$ is also
$\varepsilon$-Lipschitz:
\eqncount
\begin{equation}\label{epsilon-lip}
\big|\varphi_U(x)-\varphi_U(y)\big|\le\frac{1}{|G|}\sum_g\big|f_{gU}(gx)-f_{gU}(gy)\big|\le
\varepsilon d(x,y).
\end{equation}
Recall that the map $\Phi\colon X\to |K(\cU)|$ defined by:
$$\Phi(x)=\sum_U \varphi_U(x)\, e_U$$
is continuous.
If $\cU$ is a $G$-invariant open covering of $X$ and
$\{ \varphi_U \ | \ U\in\cU \}$
is a $G$-partition of unity subordinate to $\cU$, then $\Phi$ is also equivariant:
\begin{gather*}
\Phi(gx)=\sum_U \varphi_U(gx)\, e_U=\sum_U \varphi_{gU}(gx)\, e_{gU} \\
=\sum_U \varphi_U(x)\, e_{gU}=g\sum_U \varphi_U(x)\, e_U=g\Phi(x).
\end{gather*}

The following observation will be used in the proof of the main result of this section.

\begin{remark}
If $f \colon X\to Y$ is a $\varepsilon$-Lipschitz map between two metric spaces
$X$ and $Y$ and if $\cV =\{ V_\alpha \}$ is an open covering of $Y$ with Lebesgue
number $r$, then
$\cU=f^{-1}\cV=\{f^{-1}(V)\vv V\in\cV\}$ is an open
covering of $X$ with Lebesgue number at least $r/ \varepsilon$.
\end{remark}

\begin{theorem}\label{regular coarsening system}
For any proper geodesic metric space $X$ with asymptotic dimension $\leq l$,  and for any finite group $G$ which acts on $X$ by isometries, there is a
regular $G$-coarsening system of $X$ consisting
of $G$-invariant coverings of degree at most $p(l+1)$, where $p=|G|$.
\end{theorem}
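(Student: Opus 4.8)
The plan is to start from the $G$-invariant coarsening system $\{\widetilde{\cU}_n\}$ of Lemma \ref{inv.coars.syst}, whose nerves satisfy $\dim K(\widetilde{\cU}_n) \le p(l+1)-1$ (equivalently degree $\le p(l+1)$), and to repair the only missing property, namely regularity of the nerves. I would arrange at the outset that the underlying system $\{\cU_n\}$ arises, via Theorem \ref{finit-cech}, from $\varepsilon_n$-Lipschitz uniformly cobounded maps $f_n \colon X \to P_n$ into $l$-dimensional polyhedra with $\varepsilon_n \to 0$, so that each $\cU_n$ carries an $\varepsilon_n$-Lipschitz partition of unity. To make the nerves regular I would then replace each $\widetilde{\cU}_n$ by a covering whose nerve is a \emph{double barycentric subdivision} of $K(\widetilde{\cU}_n)$, realized as a covering of $X$ through the equivariant map $\Phi_n$ associated to a $G$-partition of unity as in \eqref{epsilon-lip}.

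Concretely, for each $n$ write $L_n = K(\widetilde{\cU}_n)$ and fix an $\varepsilon_n$-Lipschitz $G$-partition of unity subordinate to $\widetilde{\cU}_n$, giving a continuous $G$-equivariant map $\Phi_n \colon X \to |L_n|$ with $\Phi_n^{-1}\bigl(\mathrm{st}(e_U)\bigr) \subseteq U$ for each vertex $e_U$. By the Proposition recalled above (Bredon \cite{bredon1}, III.1.1), the first barycentric subdivision $L_n'$ satisfies (B) and the second subdivision $L_n''$ is regular. I would set
$$\mathscr{W}_n = \bigl\{\, \Phi_n^{-1}(\mathrm{st}(w)) \vv w \text{ a vertex of } L_n'' \,\bigr\},$$
using the canonical identification $|L_n''| \approx |L_n|$. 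Since $\Phi_n$ is equivariant and the open-star covering of $L_n''$ is $G$-invariant, $\mathscr{W}_n$ is a $G$-covering, and its nerve $K(\mathscr{W}_n)$ is a $G$-invariant subcomplex of $L_n''$. As condition (A) of Definition \ref{regular complex} is inherited by subcomplexes, $K(\mathscr{W}_n)$ is again regular; and since barycentric subdivision preserves dimension, $\dim K(\mathscr{W}_n) \le \dim L_n = p(l+1)-1$, so $\mathscr{W}_n$ has degree $\le p(l+1)$.

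The substantive step, and the one I expect to be the main obstacle, is to verify that $\{\mathscr{W}_n\}$ is still a coarsening system after this subdivision. For the diameters I would prove the key geometric fact that the open-star covering of a barycentric subdivision refines that of the original complex: the open star of a vertex $\hat{\tau}$ of $L_n'$ is contained in the open star in $L_n$ of every vertex $v$ of $\tau$, because the strictly positive weight on $\hat{\tau}$ at any point of $\mathrm{st}(\hat\tau)$ forces a positive $v$-barycentric coordinate there. Iterating once more, each set of $\mathscr{W}_n$ lies inside some $\Phi_n^{-1}(\mathrm{st}(e_U)) \subseteq U$, and hence has diameter $\le R_n$. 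For the Lebesgue numbers I would invoke the standard fact that the open-star covering of a polyhedron of dimension $\le p(l+1)-1$, in barycentric coordinates, has Lebesgue number bounded below by a constant $c = c(l,p)$ depending only on the dimension---unchanged by subdivision---and then apply the Remark preceding this theorem: since $\Phi_n$ is Lipschitz with constant a bounded multiple of $\varepsilon_n$, the covering $\mathscr{W}_n$ has Lebesgue number at least $c/(C\varepsilon_n)$ for a dimensional constant $C$. Choosing $\varepsilon_n \to 0$ and passing to a subsequence, the diameters remain bounded at each stage while the Lebesgue numbers tend to infinity, so $\{\mathscr{W}_n\}$ is the required regular $G$-coarsening system of degree $\le p(l+1)$.
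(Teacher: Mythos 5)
Your proposal is correct and follows essentially the same route as the paper: both pass to the $G$-invariant system of Lemma \ref{inv.coars.syst}, use an $\varepsilon$-Lipschitz equivariant map $\Phi$ built from a $G$-partition of unity, pull back the open vertex stars of the second barycentric subdivision to get regular $G$-coverings, and let $\varepsilon \to 0$ to drive the Lebesgue numbers to infinity. The only (harmless) divergence is in bounding the diameters of the pulled-back stars: you use the refinement of star coverings under subdivision together with $\Phi^{-1}(\mathrm{st}(e_U)) \subseteq U$, while the paper invokes the effective properness of $\Phi$ from Theorem \ref{finit-cech}(d).
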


\begin{proof}
Let $\{\widetilde{\cU}_n\}$ be a $G$-invariant coarsening system of $X$, given
by Lemma \ref{inv.coars.syst}. For any $\varepsilon >0$ there exists $n\in \mathbb{N}$
such that $\Phi\colon X\to |K(\widetilde{\cU}_n)|$ is $\varepsilon$-Lipschitz and
effectively proper map, where $|K(\widetilde{\cU}_n)|$ is equipped with
the intrinsic metric (from Theorem \ref{finit-cech}).
Moreover, we can assume that $\Phi$ is $G$-equivariant (see equation \ref{epsilon-lip}).

Let $L_n$ be the second barycentric subdivision of $K(\widetilde{\cU}_n)$ and
regard the polyhedra $|L_n|=|K(\widetilde{\cU}_n)|$ as equal. Then $L_n$ is a
locally finite regular $G$-complex. Denote by $\Phi^{-1}L_n$ the
covering of $X$ by inverse images of open vertex stars of $|L_n|$. We will show that a
subsequence of $\{\Phi^{-1}L_n\}$ form a coarsening system with the desired properties.

Since $\Phi$ is continuous, $\Phi^{-1}L_n$ is an open covering
whose sets are uniformly bounded (because $\Phi$ is effectively proper).
Its nerve is isomorphic to $L_n$ (since $\Phi$ is $G$-equivariant)
which is a regular $G$-complex, thus $\Phi^{-1}L_n$ is a regular $G$-covering.

Let $A$ be a bounded subset of $X$. Then $\Phi(A)$ is a bounded subset of
the proper metric space $|K(\widetilde{\cU}_n)|$.
Then the set
$$P=\{x\in |K(\widetilde{\cU}_n)|\text{ such that }
d\bigl(x,\Phi(A)\bigr)\le\sqrt{2}\}$$
is also bounded. For any vertex $v\in K(\widetilde{\cU}_n)$, its open star
$st(v)$ intersects $\Phi(A)$ if and only if $v\in P$. Since $cl(P)$ is compact, $P$
contains finitely many vertices, so $\Phi(A)$ intersects only finitely many
open vertex stars of $L_n$. Thus, $\Phi^{-1}L_n$ is a uniform covering of $X$.

The covering of $|L_n|$ by open vertex stars has a positive Lebesgue number
$r$ (depending only on the dimension of $L_n$), then $\Phi^{-1}L_n$ has a Lebesgue
number at least $r/ \varepsilon$. Since we can choose $\varepsilon$
arbitrarily small, we can construct regular,
uniform coverings of $X$ with arbitrarily large Lebesgue number.
\end{proof}

\section{The coarse homology of a bounded fixed set}

We assume that $(X,d)$ is a proper metric space, equipped with a coarse $G$-action of a finite group $G$. In this section, we show how a regular $G$-coarsening system for $X$ (see Definition \ref{def: regular coarsening system}) can be used to compute the coarse homology of the bounded fixed set, whenever the bounded fixed set exists. The answer is given in Corollary \ref{rhomap}.

Let $\{\cU_n\}$ be a regular $G$-coarsening system for $X$. By Lemma \ref{lem: coarsening}, the existence of a regular $G$-coarsening system is a coarse invariant. Hence by Theorem \ref{kleiner-leeb}, we will assume that $G$ acts by isometries on $X$.
Recall that
$$X^G_r =\{x\in X\vv d(x, gx) \le r,\ \forall g\in G \}$$
and suppose that $X^G_{bd}$ exists  (see Definition \ref{bdd}).

Consider the nerve
$K(\cU_n)$ and recall that $K(\cU_n|X^G_r)$ denotes the subcomplex of
$K(\cU_n)$ consisting of those simplices $(U_0,\dots , U_q)$ such that
$$U_0\cap \dots \cap U_q \cap X^G_r \neq \emptyset.$$
$K(\cU_n|X^G_r)$ is isomorphic to the nerve of
$X^G_r\cap\cU_n=\{X^G_r\cap U\vv U\in\cU_n\}$. These coverings form
a coarsening system of $X^G_r$, so we have that
$$\ch_*(X^G_r)= \lim_{n\to\infty} \HLF_*\bigl(K(\cU_n|X^G_r)\bigr).$$
We \emph{define} the coarse homology of $X^G_{bd}$ to be
$$\ch_*(X^G_{bd})=\lim_{r\to\infty} \ch_*(X^G_r)=
\lim_{r\to\infty}\lim_{n\to\infty} \HLF_*\bigl(K(\cU_n|X^G_r)\bigr).$$

 We would like to compare $\HLF_*\bigl(K(\cU_n|X^G_r)\bigr)$
to $\HLF_*\bigl(K(\cU_n)^G\bigr)$, where $K(\cU_n)^G$ denotes
the subcomplex of $K(\cU_n)$ spanned by those vertices $U\in\cU_n$ invariant
under the action of $G$. In order to achieve this, we will use their intersection
$K(\cU_n)^G_r = K(\cU_n|X^G_r)\cap K(\cU_n)^G$. Explicitly:
$$K(\cU_n)^G_r=\{(U_0,\dots , U_q)\in K(\cU_n)^G \,|\
U_0\cap\dots\cap U_q \cap X^G_r \ne\emptyset\}.$$

We will need the following algebraic lemma from Bredon \cite{bredon1}.

\begin{lemma}\label{direct limit diagram}
Let $D$ be a directed set and let $\{A_i, f_{ij}\}$
and $\{B_i, g_{ij}\}$ be direct systems of abelian groups based on $D$. Let
$\{\theta_i\colon A_i\to B_i\}$ be a homomorphism of directed systems. Assume that
for each index $i$ there is an index $j>i$ and a homomorphism
$h_{ij}\colon B_i\to A_j$ such that the diagram
$$\xymatrix{
 {A_i} \ar[r]^{{\theta}_i} \ar[d]_{f_{ij}} &
   {B_i} \ar[d]^{g_{ij}} \ar[dl]|-{h_{ij}} \\
 {A_j} \ar[r]_{{\theta}_j} & {B_j} } $$
commutes. Then the induced map
$\theta\colon \lim A_i\to \lim B_i$
is an isomorphism.
\end{lemma}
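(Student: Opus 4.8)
The plan is to prove this by a standard diagram-chase/cofinality argument showing that the two direct limits agree. First I would recall what the induced map $\theta\colon \lim A_i \to \lim B_i$ is: since $\{\theta_i\}$ is a homomorphism of directed systems (so $\theta_j \circ f_{ij} = g_{ij}\circ\theta_i$), it passes to the colimits to give a well-defined $\theta$. The key structural input is the extra family of maps $h_{ij}\colon B_i \to A_j$ making the given diagram commute, i.e. $h_{ij}\circ\theta_i = f_{ij}$ (the upper-left triangle) and $\theta_j\circ h_{ij} = g_{ij}$ (the lower-right triangle). These say that $h_{ij}$ behaves like a ``partial inverse'' to $\theta$ at the level of the individual groups, in both directions.

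Next I would verify surjectivity of $\theta$. A general element of $\lim B_i$ is represented by some $b\in B_i$ for an index $i$. By hypothesis there is $j>i$ and $h_{ij}\colon B_i\to A_j$. Set $a = h_{ij}(b)\in A_j$, and let $[a]\in\lim A_i$ be its class. Then $\theta_j(a)=\theta_j(h_{ij}(b))=g_{ij}(b)$ by the lower-right triangle, and since $g_{ij}(b)$ represents the same element of $\lim B_i$ as $b$, we get $\theta([a])=[b]$. Hence $\theta$ is surjective.

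Then I would verify injectivity. Suppose $[a]\in\lim A_i$ with $\theta([a])=0$, represented by $a\in A_i$. That $\theta([a])=0$ means that after pushing forward in the $B$-system the image vanishes: there is some index $k\ge i$ with $g_{ik}(\theta_i(a))=0$ in $B_k$, equivalently $\theta_k(f_{ik}(a))=0$ using that $\{\theta_i\}$ is a map of directed systems. Now apply the hypothesis to the index $k$: choose $j>k$ and $h_{kj}\colon B_k\to A_j$. Applying $h_{kj}$ to $\theta_k(f_{ik}(a))=0$ and using the upper-left triangle $h_{kj}\circ\theta_k = f_{kj}$ gives $f_{kj}(f_{ik}(a))=h_{kj}(0)=0$, so $f_{ij}(a)=0$ by composing the transition maps. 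Therefore $[a]=0$ in $\lim A_i$, and $\theta$ is injective.

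The argument is essentially formal once one has correctly read off the two triangle identities from the commutative diagram, so there is no deep obstacle; the one point requiring care is the cofinality bookkeeping in the injectivity step, namely that the condition $\theta([a])=0$ only guarantees vanishing after passing to \emph{some} later index $k$ in the $B$-system, and one must then invoke the hypothesis at that index $k$ (not at the original $i$) to produce the relevant $h_{kj}$. I would also note at the outset that the hypothesis must hold \emph{cofinally}, which is automatic since $D$ is directed and for each $i$ the required $j>i$ exists, so every element of either limit can be represented at an index where the $h$-maps are available.
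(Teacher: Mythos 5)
Your proof is correct: you read off the two triangle identities $h_{ij}\circ\theta_i = f_{ij}$ and $\theta_j\circ h_{ij} = g_{ij}$ accurately, and both the surjectivity and injectivity diagram chases are complete, with the one delicate point (that $\theta([a])=0$ only gives vanishing at some later index $k$, so the hypothesis must be invoked at $k$ rather than at $i$) handled properly. The paper gives no argument of its own for this lemma, deferring entirely to Bredon (Chapter III, Lemma 6.4), and your argument is essentially the standard one found there.
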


\begin{proof} See Bredon \cite{bredon1}, Chapter III, Lemma 6.4.
\end{proof}

\begin{proposition}With the above notations, we have that
$$\lim_{n\to\infty}\HLF_*\bigl(K(\cU_n)^G_r\bigr)\cong \lim_{n\to\infty}
\HLF_*\bigl(K(\cU_n|X^G_r)\bigr).$$
\end{proposition}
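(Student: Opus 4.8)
The plan is to deduce the statement from the purely algebraic Lemma \ref{direct limit diagram}, applied to the directed systems $A_n = \HLF_*\bigl(K(\cU_n)^G_r\bigr)$ and $B_n = \HLF_*\bigl(K(\cU_n|X^G_r)\bigr)$ (with $r$ fixed and $n\to\infty$), whose structure maps $f_{nm}$ and $g_{nm}$ are induced by the coarsening maps $\beta\colon K(\cU_n)\to K(\cU_m)$, and where $\theta_n$ is induced by the subcomplex inclusion $K(\cU_n)^G_r\hookrightarrow K(\cU_n|X^G_r)$. To invoke the lemma it suffices, for each $n$, to produce an index $m>n$ and a simplicial map $\psi\colon K(\cU_n|X^G_r)\to K(\cU_m)^G_r$ such that $h_{nm}:=\psi_*$ makes the triangle commute. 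Both commuting conditions will reduce to showing that $\psi$ is contiguous to $\beta$, once after restriction to the $G$-fixed part and once after inclusion into the full complex, since contiguous simplicial maps agree on $\HLF_*$.

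First I would note that the coarsening maps may be taken $G$-equivariant, so that $\beta$ restricts to $K(\cU_n)^G_r\to K(\cU_m)^G_r$ and $f_{nm}$ is defined. Given $U\in\cU_n$ with stabilizer $H=\mathrm{Stab}_G(U)$, the set $U$ is $H$-invariant, and since $\mathrm{diam}\,U\le R_n$ is below the Lebesgue number of $\cU_m$ we may pick $W\in\cU_m$ with $U\subseteq W$. The translates $\{hW\mid h\in H\}$ all contain $U$, hence span a simplex, so the regularity condition (B) of Definition \ref{regular complex} forces $W=hW$ for every $h\in H$; thus $H\subseteq\mathrm{Stab}_G(W)$ and $\beta(gU):=gW$ is consistently defined on the orbit. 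This is the first, and characteristic, use of regularity.

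Next I would construct $\psi$. For a vertex $U$ of $K(\cU_n|X^G_r)$ the $G$-invariant set $G\cdot(U\cap X^G_r)$ has diameter at most $R_n+r$: for $x,x'\in U\cap X^G_r$ and $g,g'\in G$ one has $d(gx,g'x')\le d(gx,g'x)+d(g'x,g'x')=d(x,(g^{-1}g')x)+d(x,x')\le r+R_n$. Choosing $m$ so large that the Lebesgue number $R_{m-1}$ of $\cU_m$ exceeds $R_n+r$, a single $V\in\cU_m$ contains $G\cdot(U\cap X^G_r)$; as this set is $G$-invariant, the translates $\{gV\}$ all meet it and span a simplex, so regularity (B) again gives $V=gV$, i.e. $V$ is a $G$-fixed vertex with $V\cap X^G_r\ne\emptyset$. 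Set $\psi(U)=V$. If $(U_0,\dots,U_q)$ is a simplex of $K(\cU_n|X^G_r)$ and $x\in\bigcap_i U_i\cap X^G_r$, then $x\in U_i\cap X^G_r\subseteq\psi(U_i)$ for each $i$, so $x\in\bigcap_i\psi(U_i)\cap X^G_r$ and $\psi$ is simplicial into $K(\cU_m)^G_r$. The uniform diameter bound $R_n+r$ makes one choice of $m$ work for all vertices at once.

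Finally I would check the two contiguities. For a simplex $(U_0,\dots,U_q)$ with common point $x\in\bigcap_i U_i\cap X^G_r$, both $\psi(U_i)$ and $\beta(U_i)\supseteq U_i$ contain $x$, so all these vertices span a simplex of $K(\cU_m|X^G_r)$; hence $\iota\circ\psi$ is contiguous to $\beta$, giving $\theta_m h_{nm}=g_{nm}$. If moreover the $U_i$ are $G$-fixed, all these vertices are $G$-fixed, so they span a simplex of $K(\cU_m)^G_r$, showing $\psi|_{K(\cU_n)^G_r}$ is contiguous to $\beta$ and giving $h_{nm}\theta_n=f_{nm}$. Both triangles then commute on $\HLF_*$, and Lemma \ref{direct limit diagram} yields the desired isomorphism. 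I expect the main obstacle to be entirely concentrated in the two appeals to regularity (B): it is precisely what upgrades ``a set large enough to swallow a $G$-orbit'' into a genuinely $G$-\emph{fixed} vertex, and without it $\psi$ would land only in $G$-invariant simplices rather than in the $G$-fixed subcomplex $K(\cU_m)^G_r$.
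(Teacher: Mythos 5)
Your proposal is correct and follows essentially the same route as the paper: both use the Lebesgue number of $\cU_m$ to enlarge each $U$ (together with a $G$-orbit inside $U\cap X^G_r$) into a single set $V\in\cU_m$, invoke regularity condition (B) to conclude $V$ is $G$-fixed, and then apply the interlocking direct-limit Lemma \ref{direct limit diagram}. Your write-up is somewhat more careful than the paper's in defining the diagonal map vertex-by-vertex, verifying the two contiguities explicitly, and noting that equivariance of the coarsening maps is needed for the left-hand column of the diagram to make sense.
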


\begin{proof}
Take $(U_0,\dots ,U_q)\in K(\cU_n|X^G_r)$ and pick
$x\in U_0\cap\dots\cap U_q\cap X^G_r$. Then
diam$(U_i\cup Gx)\le R_n+r$ for any $i$ from $0$ to $q$.
Choose $m>n$ such that $R_m\ge R_n+r$, then
$U_i\cup Gx\subset V_i$, for some $V_i\in K(\cU_m|X^G_r)$.
Since $gV_i\cap V_i\ne\emptyset$ and $K(\cU_m|X^G_r)$ is $G$-regular,
it follows that $V_i$ is $G$-invariant, which means that
$(V_0,\dots ,V_q)\in K(\cU_m)^G_r$.
In this way we defined a map $K(\cU_n|X^G_r)\to K(\cU_m)^G_r$
which makes the following diagram commute:
$$\xymatrix{
{K(\cU_n)^G_r} \ar[r] \ar[d] & {K(\cU_n| X^G_r)}
\ar@{-->}[dl] \ar[d] \\
{K(\cU_m)^G_r}  \ar[r] & {K(\cU_m| X^G_r)} }
$$
where the horizontal maps are inclusions and the vertical ones are
the coarsening maps associated to a coarsening system. The proof follows from
Lemma \ref{direct limit diagram}.
\end{proof}
\begin{remark}\label{rem: fixed covering}
The proof just given shows that for any $r >0$, and any $n >0$, there exists an $m >n$ so that
$\cU_m^G$ is a covering of $X^G_r$. To see this, note that $X^G_r \cap \cU_n$ is a covering and each $U \in \cU_n$ with $U \cap X^G_r \neq \emptyset$ is shown to be contained in $\cU_m^G$, where $m >n$ is chosen so that
$R_m \geq R_n + r$.
\end{remark}

\begin{corollary}\quad
$ \ch_*(X^G_{bd})\cong \lim_{r\to\infty}\lim_{n\to\infty}
\HLF_*\bigl(K(\cU_n)^G_r\bigr).$
\end{corollary}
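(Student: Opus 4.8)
The plan is to chain together the definition of $\ch_*(X^G_{bd})$ with the previous proposition. By definition we have
$$\ch_*(X^G_{bd})=\lim_{r\to\infty}\lim_{n\to\infty}\HLF_*\bigl(K(\cU_n|X^G_r)\bigr),$$
so the task reduces to interchanging the inner limit over $n$ for the isomorphic system supplied by the Proposition. First I would fix $r>0$ and apply the Proposition directly: it gives a natural isomorphism
$$\lim_{n\to\infty}\HLF_*\bigl(K(\cU_n)^G_r\bigr)\cong\lim_{n\to\infty}\HLF_*\bigl(K(\cU_n|X^G_r)\bigr)$$
for each fixed $r$. Substituting this into the double limit immediately yields the claimed formula
$$\ch_*(X^G_{bd})\cong\lim_{r\to\infty}\lim_{n\to\infty}\HLF_*\bigl(K(\cU_n)^G_r\bigr).$$

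The only genuine point requiring care is that the isomorphisms produced by the Proposition are compatible with the outer directed system indexed by $r$. Concretely, I would check that as $r$ increases, the inclusion-induced maps $\HLF_*(K(\cU_n)^G_{r})\to \HLF_*(K(\cU_n)^G_{r'})$ (for $r'\ge r$) commute with the corresponding maps $\HLF_*(K(\cU_n|X^G_r))\to\HLF_*(K(\cU_n|X^G_{r'}))$ under the isomorphisms of the Proposition. This is a naturality statement: the maps constructed in the proof of the Proposition come from the inclusions $K(\cU_n)^G_r \hookrightarrow K(\cU_n|X^G_r)$ and the coarsening maps, all of which are evidently natural in $r$ since increasing $r$ only enlarges each subcomplex while preserving the commuting diagrams fed into Lemma \ref{direct limit diagram}.

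The main obstacle, such as it is, is bookkeeping rather than mathematics: one must verify that the auxiliary choice of $m>n$ (depending on both $n$ and $r$) used inside the proof of the Proposition can be made coherently so that passing to the double direct limit is legitimate. Since directed limits commute with one another, and each level $r$ already yields an isomorphism of directed systems in $n$, I would invoke the fact that a natural isomorphism of $r$-indexed directed systems (of $n$-limits) induces an isomorphism on the $r$-colimit. Thus the corollary follows formally, and no new estimates or geometric input are needed beyond what the Proposition already provides.
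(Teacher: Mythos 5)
Your proposal is correct and follows exactly the route the paper intends: the corollary is stated without proof as an immediate consequence of substituting the Proposition's isomorphism (for each fixed $r$) into the defining double limit $\ch_*(X^G_{bd})=\lim_{r\to\infty}\lim_{n\to\infty}\HLF_*\bigl(K(\cU_n|X^G_r)\bigr)$. Your additional check that the isomorphisms are compatible with the inclusion-induced maps as $r$ increases is a sensible piece of diligence that the paper leaves implicit.
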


We have that for any $r>0$, $K(\cU_n)^G_r$ is a subcomplex of
$K(\cU_n)^G$ and, for any $r'>r$, $K(\cU_n)^G_r$ is a
subcomplex of $K(\cU_n)^G_{r'}$, so we have a commutative diagram:

$$\xymatrix{
  {\lim\limits_{n\to\infty} \HLF_*\bigl(K(\cU_n)^G_r\bigr)} \ar[r] \ar[d] &
  {\lim\limits_{n\to\infty} \HLF_*\bigl(K(\cU_n)^G\bigr)} \\
  {\lim\limits_{n\to\infty} \HLF_*\bigl(K(\cU_n)^G_{r'}\bigr)} \ar[ur] & }
$$
From the definition of the direct limit we get a map
$$\rho\colon\lim_{r\to\infty}\lim_{n\to\infty}\HLF_*\bigl(K(\cU_n)^G_r\bigr)
\to \lim_{n\to\infty}\HLF_*\bigl(K(\cU_n)^G\bigr).$$

\begin{theorem}
The map $\rho$ is an isomorphism.
\end{theorem}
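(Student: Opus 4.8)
The plan is to reduce the whole statement to the elementary observation that, for $r$ large relative to $n$, the subcomplex $K(\cU_n)^G_r$ already exhausts $K(\cU_n)^G$. Concretely, I would first prove the stabilization equality
\[
K(\cU_n)^G_r = K(\cU_n)^G \qquad \text{for all } r\ge R_n .
\]
The inclusion $K(\cU_n)^G_r\subseteq K(\cU_n)^G$ is immediate from the definition $K(\cU_n)^G_r = K(\cU_n|X^G_r)\cap K(\cU_n)^G$. For the reverse inclusion, let $(U_0,\dots,U_q)$ be a simplex of $K(\cU_n)^G$, so each $U_i$ is $G$-invariant and $U_0\cap\dots\cap U_q\ne\emptyset$; pick $x$ in this intersection. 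Since $U_0$ is $G$-invariant, $gx\in gU_0 = U_0$ for every $g\in G$, hence $d(x,gx)\le \operatorname{diam} U_0\le R_n\le r$. Thus $x\in X^G_r$, so $x\in U_0\cap\dots\cap U_q\cap X^G_r$ and the simplex lies in $K(\cU_n|X^G_r)$, i.e.\ in $K(\cU_n)^G_r$. Geometrically the point is just that a $G$-invariant set of diameter $\le R_n$ consists entirely of $R_n$-approximately-fixed points.

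Granting this, for each fixed $n$ the directed system $\{\HLF_*(K(\cU_n)^G_r)\}_r$, with maps induced by the inclusions $K(\cU_n)^G_r\hookrightarrow K(\cU_n)^G_{r'}$, is eventually constant: for $r,r'\ge R_n$ all complexes equal $K(\cU_n)^G$ and all maps are identities. Therefore
\[
\lim_{r\to\infty}\HLF_*\bigl(K(\cU_n)^G_r\bigr)=\HLF_*\bigl(K(\cU_n)^G\bigr),
\]
the isomorphism being induced by the inclusions $K(\cU_n)^G_r\hookrightarrow K(\cU_n)^G$, which are exactly the maps defining $\rho$. Since direct limits of abelian groups commute with one another, I would then interchange the two colimits,
\[
\lim_{r\to\infty}\lim_{n\to\infty}\HLF_*\bigl(K(\cU_n)^G_r\bigr)\cong \lim_{n\to\infty}\lim_{r\to\infty}\HLF_*\bigl(K(\cU_n)^G_r\bigr)\cong \lim_{n\to\infty}\HLF_*\bigl(K(\cU_n)^G\bigr),
\]
and verify that under these identifications the composite is precisely $\rho$, which gives the isomorphism.

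An equivalent, interchange-free route (closer in spirit to Lemma \ref{direct limit diagram}) is a direct chase on representatives. For surjectivity, a class of $\lim_n\HLF_*(K(\cU_n)^G)$ represented at level $n$ lifts along the identity $K(\cU_n)^G = K(\cU_n)^G_{R_n}$ and maps back to it under $\rho$. For injectivity, if a representative $a\in\HLF_*(K(\cU_n)^G_r)$ dies in $\HLF_*(K(\cU_m)^G)$ for some $m\ge n$, then choosing $r'\ge\max(r,R_m)$ and using $K(\cU_m)^G_{r'}=K(\cU_m)^G$ together with the commutativity of the coarsening maps with the inclusions shows that $a$ already dies in $\HLF_*(K(\cU_m)^G_{r'})$, so its class in the double limit vanishes. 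The main obstacle, in either approach, is bookkeeping rather than geometry: one must know that the coarsening maps restrict to maps $K(\cU_n)^G\to K(\cU_m)^G$ of the $G$-fixed subcomplexes (so that the target $\lim_n\HLF_*(K(\cU_n)^G)$ and the comparison squares are defined) and that these restrictions commute with the inclusions $K(\cU_\bullet)^G_r\hookrightarrow K(\cU_\bullet)^G$. This is exactly where the regularity of the $G$-coarsening system and the $G$-equivariance built into Theorem \ref{regular coarsening system} enter; once it is in hand, the essential content is the two-line stabilization lemma above.
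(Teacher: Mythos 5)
Your proof is correct and rests on the same key observation as the paper's: a simplex of $K(\cU_n)^G$ automatically meets $X^G_r$ once $r\ge R_n$, because a $G$-invariant set of diameter at most $R_n$ consists of points moved at most $R_n$ by every $g\in G$. The paper runs this observation through a representative chase (applied to cycles for surjectivity and to bounding chains for injectivity), while you record the sharper statement $K(\cU_n)^G_r=K(\cU_n)^G$ for $r\ge R_n$ and finish by interchanging colimits; this is the same argument in a cleaner package.
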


\begin{proof} We will show first that $\rho$ is surjective.
Let $w=(w_p, w_{p+1}, \dots )$ be an element in
$\lim\limits_{n\to\infty}\HLF_*\bigl(K(\cU_n)^G\bigr)$, then
$w_p$ is a cycle in $C_*\bigl(K(\cU_p)^G\bigr)$ of the form:
$$ w_p=\sum_i\lambda_i\sigma_i $$
where $\sigma_i=(U_0,\dots ,U_q)$ is a simplex in
$K(\cU_p)^G$. Then $U_0\cap\dots\cap U_q$ is an invariant set
with diameter less than $R_p$, so the diameter of the orbit of any
point in this set is bounded by $R_p$. Thus
$\sigma_i\in K(\cU_n)^G_r$ for any $r>R_p$.

Since $r$ is independent of $\sigma_i$, the same one will work for
all $\sigma_i$ in $w_p$ which means that $w_p$ is a cycle in
$C^{lf}_*\bigl(K(\cU_p)^G_r\bigr)$ which defines an element in the group
$\lim\limits_{r\to\infty}\lim\limits_{n\to\infty}\HLF_*\bigl(K(\cU_n)^G_r\bigr)$. By
construction, $\rho$ maps this element to $w$, so $\rho$ is surjective.

To show that $\rho$ is injective, let $z=(z^{l})_{\scriptscriptstyle l>0}\in\ker (\rho)$.
There exists some $r$ such that $z^r$ is mapped to the zero
element in $\lim\limits_{n\to\infty}\HLF_*\bigl(K(\cU_n)^G\bigr)$. Let
$z^r=(z^r_p, z^r_{p+1},\dots )$ and suppose that $z^r_p$ is a boundary in
$C_*\bigl(K(\cU_p)^G\bigr)$, so $z^r_p=\partial w_p$ where $w_p$ is a
chain in $C_{*+1}\bigl(K(\cU_p)^G\bigr)$. An argument similar to the one used above
shows that $w_p\in C_{*+1}\bigl(K(\cU_p)^G_s\bigr)$ for some $s\ge r$.
Since the map
$$\HLF_*\bigl(K(\cU_p)^G_r\bigr)\to \HLF_*\bigl(K(\cU_p)^G_s\bigr)$$
is induced by the inclusion
$K(\cU_p)^G_r\to K(\cU_p)^G_s$, it follows that
$z^s_p=0$ in $\HLF_*\bigl(K(\cU_p)^G_s\bigr)$, which implies that $z=0$ in
$\lim\limits_{r\to\infty}\lim\limits_{n\to\infty}\HLF_*\bigl(K(\cU_n)^G_r\bigr)$.
\end{proof}

Therefore, we obtain a characterization of the coarse homology of the bounded fixed set.
\begin{corollary}\label{rhomap} Let $X$ be a coarse $G$-space, for $G$ a finite group. Assume that $X$ admits a regular $G$-coarsening system $\{\cU_n\}$.  Then
$$\ch_*(X^G_{bd})=\varinjlim \HLF_*\bigl(K(\cU_n)^G\bigr)$$
provided that the bounded fixed set exists.
\end{corollary}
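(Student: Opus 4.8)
The plan is to obtain the identity as a two-step concatenation of the results already established in this section, so that no new geometric input is required. Because the bounded fixed set exists, the subspaces $\{X^G_r\}$ have stable coarse type for large $r$, so the defining formula
$$\ch_*(X^G_{bd})=\lim_{r\to\infty}\lim_{n\to\infty}\HLF_*\bigl(K(\cU_n|X^G_r)\bigr)$$
genuinely computes the coarse homology of the subspace $Y=X^G_{bd}$: for $r\ge k_0$ the inclusion $Y\to X^G_r$ is a coarse equivalence and hence induces an isomorphism on $\ch_*$, so the direct limit over $r$ is the stable value $\ch_*(Y)$. This is the one place where the existence hypothesis is used.

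First I would invoke the preceding Corollary, which repackages the Proposition (the isomorphism $\lim_n\HLF_*(K(\cU_n)^G_r)\cong\lim_n\HLF_*(K(\cU_n|X^G_r))$, natural in $r$) into the statement
$$\ch_*(X^G_{bd})\cong\lim_{r\to\infty}\lim_{n\to\infty}\HLF_*\bigl(K(\cU_n)^G_r\bigr),$$
thereby reducing the computation from the intersection complexes $K(\cU_n|X^G_r)$ to the $G$-fixed-and-bounded complexes $K(\cU_n)^G_r$. Second, I would apply the Theorem asserting that the canonical comparison map
$$\rho\colon\lim_{r\to\infty}\lim_{n\to\infty}\HLF_*\bigl(K(\cU_n)^G_r\bigr)\longrightarrow\lim_{n\to\infty}\HLF_*\bigl(K(\cU_n)^G\bigr)$$
is an isomorphism. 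Composing the two isomorphisms then yields
$$\ch_*(X^G_{bd})\cong\lim_{n\to\infty}\HLF_*\bigl(K(\cU_n)^G\bigr)=\varinjlim\HLF_*\bigl(K(\cU_n)^G\bigr),$$
which is the assertion.

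I do not expect a genuine obstacle here, since the two key ingredients are already in place: the real content was isolated in the Proposition (using that each $U\in\cU_n$ meeting $X^G_r$ can be absorbed, via an $m>n$ with $R_m\ge R_n+r$, into a $G$-invariant vertex of $\cU_m$, by regularity) and in the Theorem (using that a $G$-invariant simplex $(U_0,\dots,U_q)$ of $K(\cU_p)$ has $U_0\cap\dots\cap U_q$ of diameter at most $R_p$, so its $G$-orbit is bounded and the simplex already lies in $K(\cU_p)^G_r$ for every $r>R_p$). The only points I would be careful to record are that $X$ does admit a regular $G$-coarsening system, which is exactly the standing hypothesis of the Corollary and is furnished in general by Theorem \ref{regular coarsening system}, and that the reduction to isometric actions together with the coarse invariance of the entire construction have already been carried out via Theorem \ref{kleiner-leeb} and Lemma \ref{lem: coarsening}.
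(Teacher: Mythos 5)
Your proposal is correct and follows exactly the route the paper intends: the corollary is obtained by composing the earlier identification $\ch_*(X^G_{bd})\cong\lim_{r}\lim_{n}\HLF_*\bigl(K(\cU_n)^G_r\bigr)$ with the theorem that $\rho$ is an isomorphism, and you correctly isolate that the existence of $X^G_{bd}$ is what makes the double limit compute the coarse homology of the actual subspace. No issues.
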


\section{The coarse homology of the orbit space}

Let ($X,d$) be a proper metric space and let $G$ a finite
group which acts on $X$ by isometries.
Define a map $d^\bigast\colon X/G\times X/G\to\bbR$ by
$$d^\bigast(\bar{x},\bar{y})=\min_{g\in G}d(x,gy),$$
with $p(x)=\bar{x}$ and $p(y)=\bar{y}$, where $p\colon X\to X/G$ is the canonical projection.
This is independent of the choice of $x\in p^{-1}(\bar{x})$ and $y\in p^{-1}(\bar{y})$
because each map $g$ is an isometry.

\begin{lemma}
The map $d^\bigast$ is a metric and ($X/G,d^\bigast$) is a proper metric space.
\end{lemma}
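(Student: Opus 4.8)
The plan is to verify the four metric-space axioms for $d^\bigast$ and then establish properness. For the metric axioms, the only subtle points are positivity (separation) and the triangle inequality; symmetry and $d^\bigast(\bar x, \bar x) = 0$ are immediate. First I would note that since $G$ is finite, the infimum in the definition of $d^\bigast$ is actually attained as a minimum, so $d^\bigast$ is well defined and finite. For symmetry, using that each $g$ is an isometry, $d(x, gy) = d(g^{-1}x, y) = d(y, g^{-1}x)$, and as $g$ ranges over $G$ so does $g^{-1}$, giving $d^\bigast(\bar x, \bar y) = d^\bigast(\bar y, \bar x)$.

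For positivity, suppose $d^\bigast(\bar x, \bar y) = 0$. Then $d(x, g y) = 0$ for some $g \in G$, so $x = gy$, whence $\bar x = p(x) = p(gy) = p(y) = \bar y$; conversely $d^\bigast(\bar x, \bar x) = 0$ by taking $g = e$. For the triangle inequality I would choose $g, h \in G$ realizing the minima in $d^\bigast(\bar x, \bar y)$ and $d^\bigast(\bar y, \bar z)$, so that $d^\bigast(\bar x, \bar y) = d(x, gy)$ and $d^\bigast(\bar y, \bar z) = d(y, hz)$. Applying $g$ (an isometry) to the second, $d(y, hz) = d(gy, g h z)$, and then the triangle inequality in $X$ gives
\begin{equation*}
d(x, ghz) \le d(x, gy) + d(gy, ghz) = d^\bigast(\bar x, \bar y) + d^\bigast(\bar y, \bar z).
\end{equation*}
Since $gh \in G$, the left side is at least $d^\bigast(\bar x, \bar z) = \min_{k} d(x, kz)$, which yields the desired inequality.

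It remains to show $(X/G, d^\bigast)$ is proper, meaning every closed bounded set is compact; equivalently, every closed ball $\overline{B}(\bar x_0, R)$ in $X/G$ is compact. The key observation is that the projection $p\colon X \to X/G$ is distance-nonincreasing (indeed $d^\bigast(p(x), p(y)) \le d(x,y)$ by taking $g = e$) and, more usefully, that $p$ maps the closed ball $\overline{B}(x_0, R) \subset X$ onto $\overline{B}(\bar x_0, R) \subset X/G$: given $\bar y$ with $d^\bigast(\bar x_0, \bar y) \le R$, the minimizing $g$ gives a representative $gy$ with $d(x_0, gy) \le R$ lying in the ball upstairs. Since $(X,d)$ is proper, $\overline{B}(x_0, R)$ is compact, $p$ is continuous, and a continuous image of a compact set is compact; hence $\overline{B}(\bar x_0, R)$ is compact. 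The main thing requiring care is confirming that $p$ is continuous for the topology induced by $d^\bigast$ (which follows from $p$ being distance-nonincreasing) and that the image of the upstairs ball is exactly the downstairs ball rather than merely contained in it; I expect the surjectivity-onto-the-ball step to be the one needing the attained-minimum property most essentially.
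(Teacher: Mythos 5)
Your proof is correct. The verification of the metric axioms (symmetry via $d(x,gy)=d(g^{-1}x,y)$, separation and the triangle inequality via the minimum being attained because $G$ is finite) is essentially identical to the paper's. Where you diverge is in the properness argument: you work directly with the sets $\{y \in X \vv d(x_0,y)\le R\}$, which are closed and bounded and hence compact by properness of $X$, observe that $p$ maps such a set \emph{onto} $\{\bar y \vv d^\bigast(\bar x_0,\bar y)\le R\}$ (surjectivity being a one-line consequence of the attained minimum), and conclude by continuity of the $1$-Lipschitz map $p$. The paper instead insists on using the closure $\overline{B}(x,r)$ of the \emph{open} ball (noting explicitly that this can be strictly smaller than $\{y \vv d(x,y)\le r\}$), and proves $p\bigl(B(x,r)\bigr)=B\bigl(p(x),r\bigr)$ and $p\bigl(\overline{B}(x,r)\bigr)=\overline{B}\bigl(p(x),r\bigr)$ via a sequential pigeonhole argument over the finite group. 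Your route is shorter and entirely adequate for the lemma as stated; what the paper's longer detour buys is the fact that $p$ is an \emph{open} map, which it records ``for later reference'' and uses in the subsequent proposition that $\{\cU^\bigast_n\}$ is a coarsening system of $X/G$. So nothing is missing from your proof of this lemma, but be aware that the openness of $p$ is a byproduct the paper deliberately extracts here and that you would still need to supply downstream.
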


\begin{proof}
It is obvious that $d^\bigast$ is symmetric and takes nonnegative values. Suppose that
$d^\bigast(\bar{x},\bar{y})=0$. Then $x=gy$, for some $g\in G$ and we have that $\bar{x}=p(x)=p(gy)=\bar{y}$.

Let $\bar{x},\bar{y},\bar{z}\in X/G$. Then
$d^\bigast(\bar{x},\bar{z})\le d(x,gz)\le d(x,hy)+d(hy,gz),$
for any $g,h\in G$. Since $G$ is finite, $d^\bigast(\bar{x},\bar{y})=d(x,ky)$ for some $k\in G$.
It follows that $d^\bigast(\bar{x},\bar{z})\le d^\bigast(\bar{x},\bar{y})+d(ky,gz)$, for any $g\in G$.
If we choose $g$ so that $d(ky,gz)=d(y,k^{-1}gz)=d^\bigast(\bar{y},\bar{z})$, we obtain the triangle inequality:
$$d^\bigast(\bar{x},\bar{z})\le d^\bigast(\bar{x},\bar{y})+d^\bigast(\bar{y},\bar{z}). $$

For the second part, notice that $p$ is continuous with respect to the metric topologies on $X$ and $X/G$
(if $d(x_n,x)\to 0$, then
$d^\bigast\bigl(p(x_n),p(x)\bigr)\to 0$ as well, since $d^\bigast\bigl(p(x),p(y)\bigl)\le d(x,y)$, for any $x,y\in X$).
We will denote by $B(?,r)$ an open $r$-ball in either metric space and by $\overline{B}(?,r)$ its closure.
We want to show that
$p\bigl(\overline{B}(x,r)\bigr)=\overline{B}\bigl(p(x),r\bigr)$ for any $r>0$. For later reference, we will
show first that $p\bigl(B(x,r)\bigr)=B\bigl(p(x),r\bigr)$, which implies that $p$ is an open map:
if $U\subset X$ is open, then
$$ U=\bigcup_{x\in U} B(x,\varepsilon_x), $$
so $p(U)=\bigcup B\bigl(p(x),\varepsilon_x\bigr)$ is open in $X/G$.

Choose $\bar{y}\in p\bigl(B(x,r)\bigr)$, so there is $y\in B(x,r)$ with $p(y)=\bar{y}$. Since $d(x,y)<r$,
then $d^\bigast(\bar{x},\bar{y})<r$, where $\bar{x}=p(x)$, so $\bar{y}\in B(\bar{x},r)$.

Start with $\bar{y}\in B(\bar{x},r)$, so $d^\bigast(\bar{x},\bar{y})<r$, which means that, for some $g\in G$,
$d(x,gy)=d^\bigast(\bar{x},\bar{y})<r$ or $gy\in B(x,r)$, thus $\bar{y}=p(gy)\in p\bigl(B(x,r)\bigr)$.

Now let $\bar{y}\in p\bigl(\overline{B}(x,r)\bigr)$, so there exists $y\in \overline{B}(x,r)$
with $p(y)=\bar{y}$. Then, for any
$\varepsilon>0$, we have that $B(y,\varepsilon)\cap B(x,r)\neq\emptyset$. This implies that
$B(\bar{y},\varepsilon)\cap B(\bar{x},r)\not=\emptyset$, thus $\bar{y}\in \overline{B}(\bar{x},r)$.

If $\bar{y}\in \overline{B}(\bar{x},r)$, then for any $\varepsilon>0$, we have
$B(\bar{y},\varepsilon)\cap B(\bar{x},r)\ne\emptyset$. For any $\varepsilon=\frac{1}{n}$, there is
$\bar{z}_n\in B(\bar{y},\frac{1}{n})\cap B(\bar{x},r)$, so there exist $g_n,h_n\in G$ such that $d(x, g_nz_n)<r$
and $d(g_nz_n,h_ny)<1/n$. Since $G$ is finite, there is some $h\in G$ such that $h=h_n$ for
infinitely many $n$.

For any $\delta>0$, there is such an index $n$ with $\delta\ge\frac{1}{n}$, therefore
$B(hy,\delta)\cap B(x,r)\ne\emptyset$ which implies that $hy\in \overline{B}(x,r)$, thus
$\bar{y}=p(hy)\in p\bigl(\overline{B}(x,r)\bigr)$.

Let $\overline{A}\subset X/G$ be closed and bounded, so $\overline{A}\subset \overline{B}(\bar{x},r)$, for
some $\bar{x}\in X/G$ and some finite $r>0$. If $A=p^{-1}(\overline{A})\cap \overline{B}(x,r)$ (for some
$x\in p^{-1}(\bar{x})$), then $A$ is closed (since $p$ is continuous) and bounded in $X$. Thus $A$ is compact,
so $p(A)=\overline{A}$ is also compact, which completes the proof.
\end{proof}

\begin{remark}
In general, $\overline{B}(x,r)\varsubsetneq \{y\in X\vv d(x,y)\le
r\}$, for example, if $d$ is the discrete metric.
\end{remark}

Let $\cU$ be a regular $G$-covering of $X$,  and for each $U \in \cU$ let
$$G(U) = \bigcup \{ gU \vv {g\in G}\}$$
denote the union of the orbit of $U$ in $X$.
Let $\cU^\bigast$ denote the
covering of $X/G$ by the sets $U^\bigast=G(U)/G$ and indexed by $\cU/G$. 
If $G(U)=G(V)$ but $U\ne gV$ for any $g$, then
$U^\bigast$ and $V^\bigast$ are regarded as different elements of the covering.

\begin{lemma}[\cite{bredon1}]
If $\cU$ is a regular $G$-covering of $X$, then the assignment
$\{gU\vv g\in G\}\mapsto U^\bigast$ gives an isomorphism of the simplicial complexes
$$K(\cU)/G\to K(\cU^\bigast).$$
\end{lemma}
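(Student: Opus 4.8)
The plan is to exhibit the vertex assignment $\Psi\colon \{gU\vv g\in G\}\mapsto U^\bigast$ as a bijection of vertex sets under which a finite set of vertices spans a simplex of $K(\cU)/G$ if and only if its image spans a simplex of $K(\cU^\bigast)$; this is precisely a simplicial isomorphism. The first thing I would record is that, as a subset of $X/G$, the set $U^\bigast=G(U)/G$ coincides with $p(U)$, since $p(gU)=p(U)$ for every $g\in G$; thus the simplices of $K(\cU^\bigast)$ are detected by nonempty intersections of the images $p(U_i)$. On vertices $\Psi$ is a bijection essentially by construction, because $\cU^\bigast$ is indexed by $\cU/G$. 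Before comparing simplices I would invoke regularity: by condition (B) of Definition \ref{regular complex}, no simplex of $K(\cU)$ contains two vertices lying in a common $G$-orbit, so the quotient map $\pi\colon K(\cU)\to K(\cU)/G$ is injective on the vertices of each simplex. This is exactly what makes $K(\cU)/G$ a genuine simplicial complex and guarantees that $\pi$ carries each $q$-simplex onto a $q$-simplex.

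For the forward implication I would take a simplex $\{[U_0],\dots,[U_q]\}$ of $K(\cU)/G$, lift it along $\pi$ to a simplex $\{U_0,\dots,U_q\}$ of $K(\cU)$, and choose a point $x\in U_0\cap\dots\cap U_q$. Applying $p$ gives $p(x)\in p(U_0)\cap\dots\cap p(U_q)=U_0^\bigast\cap\dots\cap U_q^\bigast$, so the images form a simplex of $K(\cU^\bigast)$, with distinct vertices because $\Psi$ is injective.

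The backward implication is where I expect the main obstacle, and where the orbit structure carries the argument. Beginning with a simplex $\{U_0^\bigast,\dots,U_q^\bigast\}$ of $K(\cU^\bigast)$, I would pick $\bar x\in p(U_0)\cap\dots\cap p(U_q)$ and, for each $i$, a point $x_i\in U_i$ with $p(x_i)=\bar x$. As all of the $x_i$ lie in one $G$-orbit, I can write $x_i=g_ix_0$ with $g_i\in G$ and $g_0=e$; then $x_0\in g_i^{-1}U_i$ for every $i$, so $g_0^{-1}U_0\cap\dots\cap g_q^{-1}U_q\neq\emptyset$. These $q+1$ sets lie in the distinct orbits $[U_0],\dots,[U_q]$ and are therefore distinct, so $\{g_0^{-1}U_0,\dots,g_q^{-1}U_q\}$ is a genuine $q$-simplex of $K(\cU)$ whose image under $\pi$ is $\{[U_0],\dots,[U_q]\}$; hence the latter is a simplex of $K(\cU)/G$. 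Together with the forward step this shows that $\Psi$ and its inverse are both simplicial. The one delicate point I would check is the indexing convention for $\cU^\bigast$, namely that $U^\bigast$ and $V^\bigast$ are counted as distinct vertices when $G(U)=G(V)$ but $U\neq gV$: here I would verify that $\{[U],[V]\}$ is in fact an edge of $K(\cU)/G$ (any point of the common set $G(U)=G(V)$ lies in some $gU$ and some $hV$), so that the two nerves match even in this degenerate situation.
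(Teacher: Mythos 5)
Your argument is correct. The paper gives no proof of its own here, deferring entirely to Bredon \cite{bredon1} (Chapter III, Proposition 6.2), and your write-up is essentially that standard argument: the vertex bijection $\{gU \vv g\in G\}\mapsto U^\bigast$ together with the identification $U^\bigast=p(U)$, condition (B) of regularity to ensure $\pi\colon K(\cU)\to K(\cU)/G$ is injective on the vertices of each simplex, and the orbit-translation step $x_i=g_ix_0$ to pull a nonempty intersection $\bigcap p(U_i)$ in $X/G$ back to a nonempty intersection $\bigcap g_i^{-1}U_i$ in $X$ --- with the degenerate indexing case $G(U)=G(V)$, $U\neq gV$ also handled correctly.
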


\begin{proof}
See Bredon \cite{bredon1}, Chapter III, Proposition 6.2.
\end{proof}

Suppose that $\cU$ and $\cV$ are regular $G$-coverings of $X$ such that
$\cV$ is a refinement of $\cU$. Then there is a refinement
projection $\pi\colon \cV\to\cU$ that is \emph{equivariant}; that is,
$V\subset \pi(V)$ and $\pi(gV)=g\pi(V)$. When $\pi$ is equivariant, the simplicial map
$\bar{\pi}\colon K(\cV)\to K(\cU)$ is also equivariant.

As an immediate consequence, given a regular $G$-coarsening system $\{\cU_n\}$,
we can assume that all the coarsening maps $\beta_n\colon \cU_n\to\cU_{n+1}$
are equivariant.

\begin{proposition}
Let $\{\cU_n\}$ be a regular $G$-coarsening system for $X$. Then $\{\cU^\bigast_n\}$
is a coarsening system of $X/G$.
\end{proposition}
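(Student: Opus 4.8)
The plan is to verify that the sequence $\{\cU^\bigast_n\}$ of coverings of $X/G$ satisfies the two conditions in Definition \ref{coarsening}: uniformly bounded diameters controlled by some $R_n \to \infty$, and an increasing Lebesgue number. Since we already know from the preceding lemma that $K(\cU_n)/G \cong K(\cU^\bigast_n)$ and that the coarsening maps $\beta_n$ can be taken equivariant (hence descend to refinement projections $\cU^\bigast_n \to \cU^\bigast_{n+1}$), the bulk of the work is purely metric: I need to control $\operatorname{diam} U^\bigast$ and the Lebesgue numbers in $(X/G, d^\bigast)$, using the relation $d^\bigast(\bar x, \bar y) \le d(x,y)$ established in the previous lemma on the orbit metric.

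First I would bound the diameters. Each set of $\cU^\bigast_n$ has the form $U^\bigast = G(U)/G = p(U)$ for $U \in \cU_n$, since $p(gU) = p(U)$ for all $g$. Because $d^\bigast(p(x), p(y)) \le d(x,y)$ for all $x,y \in X$, the projection $p$ is distance-nonincreasing, so
$$\operatorname{diam}_{d^\bigast} U^\bigast = \operatorname{diam}_{d^\bigast} p(U) \le \operatorname{diam}_d U \le R_n.$$
Thus condition (a) of Definition \ref{coarsening} holds with the same constants $R_n$ that worked for $\{\cU_n\}$.

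Next I would establish the Lebesgue number bound (condition (b)). The idea is to pull back a point of $X/G$ to $X$, use the Lebesgue number of $\cU_{n}$ there, and push the resulting ball-in-a-single-member down. Concretely, given $\bar x \in X/G$, pick $x \in p^{-1}(\bar x)$; since $\cU_{n+1}$ has Lebesgue number $\ge R_n$, the metric ball $B_d(x, R_n)$ lies in some single $U \in \cU_{n+1}$, whence $p\bigl(B_d(x,R_n)\bigr) \subseteq U^\bigast$. Using the lemma from the orbit-space section that $p(B(x,r)) = B(p(x),r)$, this gives $B_{d^\bigast}(\bar x, R_n) \subseteq U^\bigast$, so $\cU^\bigast_{n+1}$ has Lebesgue number at least $R_n$. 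I would also remark that each $U^\bigast = p(U)$ is relatively compact (continuous image of a relatively compact set in a proper space) and that $\{\cU^\bigast_n\}$ is a uniform covering, since any bounded $\overline A \subset X/G$ has $p^{-1}(\overline A)$ meeting only finitely many orbits of members of $\cU_n$, forcing $\overline A$ to meet only finitely many of the $U^\bigast$.

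\textbf{The main obstacle} I anticipate is the Lebesgue-number step, specifically the subtlety that the equality $p(B(x,r)) = B(p(x),r)$ is exactly what makes the pull-back/push-down argument work; without it, one would only get an inclusion in the wrong direction. Fortunately this equality was proved as part of the lemma showing $(X/G, d^\bigast)$ is proper earlier in this section, so I can cite it directly. A secondary point requiring care is the indexing convention noted before the lemma: when $G(U) = G(V)$ but $U \neq gV$, the sets $U^\bigast$ and $V^\bigast$ are treated as distinct members of the covering, so strictly $\cU^\bigast_n$ is indexed by $\cU_n/G$ rather than by its underlying subsets of $X/G$; I would keep this bookkeeping explicit so that the isomorphism $K(\cU_n)/G \cong K(\cU^\bigast_n)$ lines up correctly and the inherited coarsening maps are genuinely refinement projections.
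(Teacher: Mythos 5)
Your proof is correct and follows the paper's argument essentially step for step: diameters via $d^\bigast\bigl(p(x),p(y)\bigr)\le d(x,y)$, uniformity by lifting a bounded set of $X/G$ and using finiteness of $G$, and the Lebesgue number via the identity $p\bigl(B(x,r)\bigr)=B\bigl(p(x),r\bigr)$ proved in the lemma on $(X/G,d^\bigast)$. The only cosmetic difference is that the paper tests arbitrary subsets of small diameter rather than balls (and accordingly records a Lebesgue number of $R_{n-1}/2$ instead of your $R_n$, since a ball of radius $R_n$ has diameter up to $2R_n$ under the paper's convention); as the bounds tend to infinity either way, this is immaterial.
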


\begin{proof}
Let $p\colon X\to X/G$ be the canonical projection and notice that $U^\bigast=G(U)/G=p(U)$.
It follows that any $U^\bigast\in\cU_n^\bigast$ is open (since $p$ is an open map) and
$\text{diam}\,U^\bigast\le\text{diam}\,U$ (see the definition
of the metric $d^\bigast$ on $X/G$ at the beginning of this section).

Let $\bar{A}\subset X/G$ with diam $\bar{A}\le k$. There exists some point $\bar{x}\in X/G$
such that $\bar{A}\subseteq B(\bar{x},k)$, the open ball of radius $k$ around $\bar{x}$.
Let $x\in X$ such that $p(x)=\bar{x}$. Then $p\bigl(B(x,r)\bigr)=B(\bar{x},r)$, so there
exists $A\subseteq B(x,r)$ with $p(A)=\bar{A}$. It follows that $\bar{A}$ cannot intersect
infinitely many sets from $\cU_n^\bigast$ because that would imply that one of
the translates of $A$ intersects infinitely many sets from $\cU_n$ (since
$G$ is finite and $\cU_n$ is regular), which contradicts that
$\cU_n$ is uniform. Thus $\cU_n^\bigast$ is a uniform covering.

One can also see that the Lebesgue number of $\cU_n^\bigast$ is at least half of the
Lebesgue number of $\cU_n$: if $\bar{A}\subset X/G$ with
diam $\bar{A}\le R_{n-1}/2$, then
$\bar{A}\subseteq B(\bar{x},\frac{R_{n-1}}{2})$, for some $\bar{x}\in X/G$.
As above, there is a set $A\subseteq B(x,\frac{R_{n-1}}{2})$ with $p(A)=\bar{A}$ and
$p(x)=\bar{x}$. Then diam $A\le R_{n-1}$, so $A\subset U$ for some $U\in\cU_n$,
thus $\bar{A}\subset U^\bigast$.
\end{proof}

From the last two results we have the formula:
\begin{corollary}\label{orbit space}
\quad $\ch_*(X/G)=\varinjlim \HLF_*\bigl(K(\cU_n)/G\bigr).$
\end{corollary}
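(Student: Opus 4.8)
The plan is to combine the two preceding results. The Proposition shows that $\{\cU^\bigast_n\}$ is a coarsening system of the proper metric space $(X/G, d^\bigast)$, so by the definition of coarse homology
$$\ch_*(X/G) = \varinjlim \HLF_*\bigl(K(\cU^\bigast_n)\bigr).$$
Bredon's Lemma supplies, for each $n$, a simplicial isomorphism $\psi_n \colon K(\cU_n)/G \isomto K(\cU^\bigast_n)$ sending an orbit $\{gU \vv g\in G\}$ to $U^\bigast$, and hence an isomorphism $\HLF_*\bigl(K(\cU_n)/G\bigr) \cong \HLF_*\bigl(K(\cU^\bigast_n)\bigr)$. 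Thus it suffices to check that the $\psi_n$ assemble into an isomorphism of directed systems, and then pass to the colimit.

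The main (and essentially only) point is the compatibility of $\{\psi_n\}$ with the coarsening maps. Here I would invoke the remark immediately preceding the Proposition: since the system is regular, we may take the coarsening maps $\beta_n \colon \cU_n \to \cU_{n+1}$ to be equivariant, so that each $\beta_n$ descends to a well-defined map $K(\cU_n)/G \to K(\cU_{n+1})/G$ on quotient complexes. Under the identification $U \mapsto U^\bigast = p(U)$, this descended map is exactly a refinement projection defining a coarsening map $K(\cU^\bigast_n) \to K(\cU^\bigast_{n+1})$. Choosing the coarsening maps for $\{\cU^\bigast_n\}$ to be these induced ones, the square relating $\psi_n$, $\psi_{n+1}$, and the two coarsening maps commutes on the nose; any other choice of coarsening map differs by a contiguous simplicial map and hence induces the same homomorphism in homology.

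Consequently $\{\psi_n\}$ is an isomorphism of direct systems of locally finite homology groups, and passing to the direct limit yields
$$\varinjlim \HLF_*\bigl(K(\cU_n)/G\bigr) \cong \varinjlim \HLF_*\bigl(K(\cU^\bigast_n)\bigr) = \ch_*(X/G),$$
which is the asserted formula. I expect no obstacle beyond the equivariance of the coarsening maps secured earlier; everything else is formal bookkeeping with the two lemmas.
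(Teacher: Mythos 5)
Your proposal is correct and matches the paper's own (essentially unwritten) argument: the paper simply states ``From the last two results we have the formula,'' meaning exactly the combination of the coarsening-system Proposition for $\{\cU^\bigast_n\}$ with Bredon's isomorphism $K(\cU_n)/G \cong K(\cU^\bigast_n)$. Your additional care about equivariant refinement projections and contiguity is the right way to justify the compatibility of the direct systems, and it uses precisely the remark the paper places before the Proposition for this purpose.
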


\begin{remark}\label{rem: orbit}
 If the bounded fixed set $X^G_{bd}$ exists, then there exists $k_0>0$ such that its quasi-isometry type is represented by any subspace $X^G_k$, for $k \geq k_0$. In particular, the $G$-action on $X^G_k$ is coarsely ineffective (see Section \ref{sec: four}), hence 
 the projection map $\pi\colon X \to X/G$  induces a coarse equivalence   $X^G_k \cong \pi(X^G_k)$. We will use this observation to consider $X^G_{bd}$ also as a subspace of $X/G$.
\end{remark}

For completeness, we will describe the transfer map for coarse homology.
Let $\{\cU_n\}$ be a regular $G$-coarsening system of $X$ and $K(\cU_n)$
the nerve of $\cU_n$. Then we have the transfer map for locally finite homology (compare \cite[III.2]{bredon1}):
$$\mu_* \colon \HLF_*\bigl(K(\cU_n)/G\bigr)\to \HLF_*\bigl(K(\cU_n)\bigr)$$
and, for $m\ge n$ we have the induced commutative diagram
$$\xymatrix{
{\HLF\bigl(K(\cU_n)/G\bigr)} \ar[r]^-{\mu_*} \ar[d] &
{\HLF\bigl(K(\cU_n)\bigr)} \ar[d] \\
{\HLF\bigl(K(\cU_m)/G\bigr)}  \ar[r]^-{\mu_*} &
{\HLF\bigl(K(\cU_m)\bigr)} } $$
by naturality of the transfer.

Passing to the direct limit, and applying Corollary \ref{orbit space}, we obtain the transfer for coarse homology
$$\mu_*\colon \ch_*(X/G)\to\ch_*(X)$$
which satisfies
\eqncount
\begin{equation}\label{transfer property}\begin{split}
\pi_*\mu_*=|G|\colon \ch_*(X/G)\to\ch_*(X/G) \\
\mu_*\pi_*=\sigma_*=\sum_{g\in G} g_*\colon \ch_*(X)\to\ch_*(X)
\end{split}\end{equation}
where $\pi\colon X\to X/G$ is the canonical projection. 

We can define coarse homology for other coefficient groups $\Lambda$, just by tensoring the simplicial chain complexes of the $K(\cU_n)$ with $\Lambda$ (over $\bZ$), and passing to the direct limit of the locally-finite homology. The  formulas above continue to hold, and we have the usual consequence:
\begin{proposition}[{\cite[III. 2.4]{bredon1}}]
If $\Lambda$ is a field of characteristic 0 or prime to $|G|$, then
$$\pi_*\colon \ch_*(X;\Lambda)^G\to\ch_*(X/G;\Lambda)$$ is an isomorphism, as is
$$\mu_*\colon \ch_*(X/G;\Lambda)\to\ch_*(X;\Lambda)^G.$$
\end{proposition}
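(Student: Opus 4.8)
The plan is to deduce the statement formally from the two transfer identities recorded in equation \eqref{transfer property}, together with the hypothesis that $|G|$ is invertible in $\Lambda$ (which holds precisely when $\Lambda$ has characteristic $0$ or prime to $|G|$). First I would record three elementary observations. On the invariant subgroup $\ch_*(X;\Lambda)^G$ each operator $g_*$ acts as the identity, so the norm operator $\sigma_* = \sum_{g\in G} g_*$ acts there as multiplication by $|G|$. Next, since $\pi\circ g = \pi$ as maps to the orbit space, we have $\pi_*\circ g_* = \pi_*$ for every $g\in G$, and hence $\pi_*\circ\sigma_* = |G|\,\pi_*$. Finally, the chain-level transfer sends a simplex of $K(\cU_n)/G \cong K(\cU_n^\bigast)$ to the sum of the simplices in its $G$-orbit in $K(\cU_n)$, which is manifestly $G$-invariant; passing to the direct limit this shows $\Image \mu_* \subseteq \ch_*(X;\Lambda)^G$, so that $\mu_*$ may indeed be regarded as a map into the invariants.

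Granting these, the isomorphism statements become purely algebraic. For $\pi_*$, I would show that $\tfrac{1}{|G|}\mu_*$ furnishes a two-sided inverse on the invariants. If $x\in\ch_*(X;\Lambda)^G$ satisfies $\pi_*(x)=0$, then from $\mu_*\pi_* = \sigma_*$ and the first observation we get $|G|\,x = \sigma_*(x) = \mu_*\pi_*(x) = 0$, whence $x=0$; this gives injectivity. For surjectivity, given $y\in\ch_*(X/G;\Lambda)$ set $x = \tfrac{1}{|G|}\mu_*(y)$, which lies in the invariants by the third observation, and compute $\pi_*(x) = \tfrac{1}{|G|}\pi_*\mu_*(y) = \tfrac{1}{|G|}\,|G|\,y = y$ using $\pi_*\mu_* = |G|$. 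Thus $\pi_*\colon \ch_*(X;\Lambda)^G \to \ch_*(X/G;\Lambda)$ is an isomorphism.

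The corresponding statement for $\mu_*$ follows from the same two identities. The relation $\pi_*\mu_* = |G|\cdot\mathrm{id}$ on $\ch_*(X/G;\Lambda)$, with $|G|$ invertible, shows at once that $\mu_*$ is injective. For surjectivity onto the invariants, take any $x\in\ch_*(X;\Lambda)^G$ and observe that $\mu_*\bigl(\tfrac{1}{|G|}\pi_*(x)\bigr) = \tfrac{1}{|G|}\mu_*\pi_*(x) = \tfrac{1}{|G|}\sigma_*(x) = x$, again using that $\sigma_*$ acts as $|G|$ on invariants. Hence $\mu_*\colon \ch_*(X/G;\Lambda)\to\ch_*(X;\Lambda)^G$ is an isomorphism as well.

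I expect no serious obstacle here: once the transfer identities of equation \eqref{transfer property} are available at the level of coarse homology, the entire argument is the standard formal consequence of having a norm map and an invertible group order. The only point requiring a little care is the third observation, namely verifying that the transfer genuinely lands in the $G$-invariant part, but this is immediate from the chain-level description of $\mu_*$ as summation over orbits of simplices, exactly as in Bredon \cite[III.2]{bredon1}.
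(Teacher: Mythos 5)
Your proof is correct and follows exactly the route the paper intends: the paper gives no separate argument but simply cites Bredon \cite[III.2.4]{bredon1} and notes that the transfer identities \eqref{transfer property} continue to hold for coarse homology, from which the statement is the standard formal consequence you spell out. Your verification that $\mu_*$ lands in the invariants and that $\tfrac{1}{|G|}\mu_*$ inverts $\pi_*$ on invariants is precisely the classical transfer argument being invoked.
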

In the statement, the coefficients $\Lambda$ have trivial $G$-action, and 
$\ch_*(X;\Lambda)^G$ denotes the fixed set of the induced $G$-action 
on the coarse homology (see Bredon \cite[III.2]{bredon1} for the usual transfer and its properties). The formulas \eqref{transfer property} can be used to prove a coarse version of a result of Floyd \cite[III.5.4]{bredon1}.

\section{P.~A.~Smith Theory}
The goal of P.~A.~Smith theory \cite{pasmith1} is to relate the mod $p$ homology of a regular simplicial $G$-complex $K$ and its fixed set $K^G$, in the case when $G$ is a finite $p$-group and $p$ is a prime (see the Borel Seminar \cite{borel-seminar}).  We will
follow the procedure from Bredon \cite[Chap.~III]{bredon1} to pass from a
simplicial complex to a coarse space, and establish the analogue of the  P. A. Smith theory for coarse homology.

We first review the classical P.~A.~Smith theory. Let $p$ be a prime, and let $G = \cy p$,  denote the cyclic group of order $p$. Fix a generator $g \in G$, and define elements
$$ \sigma = 1 + g + g^2 + \dots + g^{p-1} \qquad \text{and} \qquad \tau = 1-g$$
in the group ring $\Fp G$. If $\varrho = \tau^j$, for $1 \leq j \leq p-1$, then we define
$\bar\varrho = \tau^{p-j}$. Since $g^p = 1$, we have $\sigma \tau = \tau \sigma = 0$ and $\sigma = \tau^{p-1}$. 

The elements $\sigma$ and $\tau$, as well as $\varrho$ and $\bar\varrho$, are considered as operators on the mod~$p$ homology $H_*(K; \Fp)$ of a regular simplicial $G$-complex. In the rest of the section, all homology groups will be understood with $\Fp$-coefficients.

Let $K$ denote a regular simplicial $G$-complex, and $L 
\subseteq K$ a $G$-invariant subcomplex. The simplicial chain complex of the pair $(K, L)$ is denoted $C(K,L)$. The  \emph{Smith special homology groups} 
$$H^\varrho_*(K, L; \Fp) = H_*(\varrho C(K,L); \Fp)$$
are defined for each $\varrho = \tau^j$, $1 \leq j \leq p-1$, as the mod $p$ homology of the chain complex $\varrho C(K,L) \subset C(K,L)$.
The main results of the classical P.~A.~Smith theory  are based on the exact sequences of chain complexes:
\eqncount
\begin{equation}\label{smith chain complexes}
\begin{split}
0 \to \bar\varrho C(K,L) \oplus C(K^G, L^G) \xrightarrow{i} C(K,L) \xrightarrow{\varrho} \varrho C(K, L) \to 0\\
0 \to \sigma C(K, L) \to \tau^j C(K, L) \xrightarrow{\tau^{j+1}} C(K, L) \to 0
\end{split}
\end{equation}
valid for $\rho = \tau^j$ and $1\leq j \leq p-1$. To generalize Smith theory to our setting, we first apply locally finite homology to these chain complexes.

\begin{definition} Let $K$ denote a regular simplicial $G$-complex, and $L 
\subset K$ a $G$-invariant subcomplex. Then  the  \emph{locally finite} Smith special homology groups
$$\Hlf{\varrho}_*(K, L) := \HLF_*(\varrho C(K, L);\Fp)$$
are defined,  for
 $\rho = \tau^j$ and $1\leq j \leq p-1$.
\end{definition}
The Smith special homology group for $\sigma =\tau^{p-1}$ is related to the orbit spaces (denoted $K^\bigast = K/G$ and $L^\bigast = L/G$), via the formula
\eqncount
\begin{equation}\label{orbit space iso}
\Hlf{\sigma}_*(K, L;\Fp) \cong \HLF_*(K^\bigast, K^G \cup L^\bigast)
\end{equation}
We have the following exact triangle (locally finite homology with $\Fp$ coefficients).
\begin{theorem}\label{exact triangle} 
For $\varrho = \tau^j$, $1 \leq j \leq p-1$, there is an exact triangle 
$$\xymatrix{
 & {\HLF_*(K,L)} \ar[dl]_{\varrho_*}& \\
{\Hlf{\varrho}_*(K,L)} \ar[rr]^(0.4){\delta_*} & & {\Hlf{\bar{\varrho}}_*(K,L)\oplus \HLF_*(K^G, L^G)} \ar[ul]_{i_*} } $$
where the horizontal map $\delta_*$ has degree $-1$ and the other maps $i_*$, $\varrho_*$ have degree $0$. 
\end{theorem}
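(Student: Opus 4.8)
The plan is to realize the exact triangle as the long exact homology sequence attached to the first short exact sequence of chain complexes in \eqref{smith chain complexes}, after promoting that sequence from ordinary to \emph{locally finite} simplicial chains. Concretely, the three vertices of the triangle will be $\HLF_*$ applied to the three terms
$$\bar\varrho\, C(K,L)\oplus C(K^G,L^G),\qquad C(K,L),\qquad \varrho\, C(K,L),$$
and the connecting homomorphism of the resulting long exact sequence will be the degree $-1$ map $\delta_*$.

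First I would check that the short exact sequence
$$0 \to \bar\varrho\, C(K,L)\oplus C(K^G,L^G) \xrightarrow{\ i\ } C(K,L) \xrightarrow{\ \varrho\ } \varrho\, C(K,L) \to 0$$
remains exact at the level of locally finite chains. The operators $\varrho=\tau^j$ and $\bar\varrho=\tau^{p-j}$ are multiplication by elements of the group ring $\Fp G$; since the $G$-action on $K$ is simplicial it merely permutes simplices, so these operators preserve local finiteness and commute with $\partial$, hence act as chain endomorphisms of the locally finite chain complex. Because $K$ is a regular $G$-complex (Definition \ref{regular complex}) and $G=\cy p$ has prime order, every simplex is either fixed pointwise by $G$---contributing to $C(K^G,L^G)$---or has a free orbit of size $p$---contributing a summand isomorphic to the regular representation $\Fp G$; this gives a $G$-equivariant splitting $C(K,L)=C(K^G,L^G)\oplus F$ with $F$ free over $\Fp G$, and the locally finite chains are the corresponding product over orbits. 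Using the identification $\Fp G\cong\Fp[\tau]/(\tau^p)$ (valid since $\tau^p=(1-g)^p=0$ in characteristic $p$), on each free factor one has $\ker(\varrho)=\bar\varrho\,\Fp G$, while $\varrho$ annihilates the fixed factors; computing kernels factorwise over the product then identifies $\ker\varrho$ with $\bar\varrho\, C(K,L)\oplus C(K^G,L^G)=\Image i$, so the sequence is exact.

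Feeding this short exact sequence of chain complexes into the standard zig-zag (snake) construction---which operates degreewise and hence applies verbatim to locally finite chain complexes---produces the long exact sequence in $\HLF_*$. Since $\HLF_*$ commutes with the finite direct sum, its three terms are $\Hlf{\bar\varrho}_*(K,L)\oplus\HLF_*(K^G,L^G)$, $\HLF_*(K,L)$, and $\Hlf{\varrho}_*(K,L)$, by the definition of the locally finite Smith special groups. Wrapping this long exact sequence around gives precisely the asserted triangle: the maps $i_*$ and $\varrho_*$ induced by $i$ and $\varrho$ have degree $0$, and the connecting map $\delta_*$ has degree $-1$. The one genuine point to verify is the first step, namely that passing from finite to locally finite chains does not destroy exactness of the classical Smith sequence; this is a factorwise check over the orbit decomposition, for which regularity of the $G$-complex is exactly what guarantees the clean fixed-plus-free splitting.
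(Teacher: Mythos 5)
Your proof is correct and follows essentially the same route as the paper, which simply cites Bredon \cite[III.3.3]{bredon1}: the triangle is the long exact sequence of the first short exact sequence in \eqref{smith chain complexes}. Your factorwise verification (fixed-plus-free orbit decomposition, $\Fp G\cong\Fp[\tau]/(\tau^p)$, kernels computed over the product of orbits) is exactly the content of Bredon's argument, and you correctly supply the one point the paper leaves implicit, namely that exactness survives the passage to locally finite chains.
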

\begin{proof}
See Bredon \cite[III.3.3]{bredon1}. The map $i_*$ is induced from the direct sum of the inclusions of the subcomplexes $\bar\varrho C(K, L)$ and $C_*(K^G, L^G)$ in $C(K, L)$ in \eqref{smith chain complexes}.
\end{proof}

 The first application of this exact triangle is to establish the``Smith inequalities" for locally finite homology.
\begin{theorem}\label{ineg}
Let $K$ be a finite-dimensional regular $G$-complex, and $L 
\subseteq K$ a $G$-invariant subcomplex. Then, for any $n\ge 0$ and
for any $\varrho=\tau^j$ with $1\le j\le p-1$,
$$\rk  \Hlf{\varrho}_n(K, L) +\sum_{i\ge n}\rk   \HLF_i(K^G, L^G)
\le\sum_{i\ge n}\rk  \HLF_i(K, L).$$ If the right-hand side is finite, then
the left-hand side is also finite, and in particular $\rk  \HLF_i(K^\bigast, K^G\cup L^\bigast)$ is  finite, for
all $i\ge n$.
\end{theorem}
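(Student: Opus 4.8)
The plan is to derive the Smith inequality directly from the exact triangle in Theorem \ref{exact triangle}, mimicking the classical argument in Bredon \cite[III.4.1]{bredon1} but taking care that everything remains valid for locally finite homology. The basic mechanism is an induction on $j$ (the power of $\tau$) combined with a ``tail rank'' counting argument. For a graded $\Fp$-vector space $V_*$ write $b_n(V) = \sum_{i\ge n}\rk V_i$ for the tail rank (which may be $+\infty$). The exactness of the triangle gives, degree by degree, a long exact sequence relating $\Hlf{\varrho}_*(K,L)$, $\Hlf{\bar\varrho}_*(K,L)\oplus \HLF_*(K^G,L^G)$, and $\HLF_*(K,L)$, with the connecting map $\delta_*$ of degree $-1$.

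First I would record the elementary lemma that for any long exact sequence $\cdots \to A_n \to B_n \to C_n \to A_{n-1}\to\cdots$ of $\Fp$-vector spaces one has the tail-rank inequality $b_n(B) \le b_n(A) + b_n(C)$; this is just additivity of rank over short exact sequences, summed over $i\ge n$, together with the fact that the image of the degree-shifting boundary map only helps. Applying this to the exact triangle, with $A_* = \Hlf{\varrho}_*(K,L)$, the summand $B_* = \Hlf{\bar\varrho}_*(K,L)\oplus \HLF_*(K^G,L^G)$, and $C_* = \HLF_*(K,L)$ suitably arranged, yields
$$
b_n\bigl(\Hlf{\bar\varrho}(K,L)\bigr) + b_n\bigl(\HLF(K^G,L^G)\bigr) \le b_n\bigl(\Hlf{\varrho}(K,L)\bigr) + b_n\bigl(\HLF(K,L)\bigr),
$$
and symmetrically with $\varrho$ and $\bar\varrho$ interchanged. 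The strategy is then to chain these inequalities together as $j$ runs from $1$ to $p-1$, using $\bar\varrho = \tau^{p-j}$, so that the ``special'' tail ranks telescope.

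The key bookkeeping step is to add the two symmetric inequalities (for $\varrho$ and for $\bar\varrho$) to eliminate the special homology terms and isolate a single copy of $b_n(\Hlf{\varrho})$ on the left, together with $b_n(\HLF(K^G,L^G))$, bounded above by $b_n(\HLF(K,L))$. Concretely, I would start from the second short exact sequence in \eqref{smith chain complexes}, which for $j = p-1$ identifies $\sigma C$ with $\tau^{p-1}C$ and gives a comparison of $\Hlf{\sigma}$ with $\HLF(K,L)$; feeding this base case into the triangle and inducting downward on $j$ produces the stated bound $\rk \Hlf{\varrho}_n(K,L) + b_n(\HLF(K^G,L^G)) \le b_n(\HLF(K,L))$. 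The finiteness clause is then immediate: if the right-hand side is finite, every term on the left is a finite sum of finite ranks, and in particular $b_n(\HLF(K^\bigast, K^G\cup L^\bigast))$ is finite by the orbit-space identification \eqref{orbit space iso} applied with $\varrho=\sigma$.

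The hard part will be verifying that the tail-rank inequality is legitimate when the groups involved may be infinite-dimensional, since locally finite homology groups need not be finitely generated. The classical Smith argument silently assumes finite dimensionality; here I must be careful that $b_n(V)$, interpreted as a cardinal/extended natural number, behaves additively in short exact sequences and that the inequalities survive passing to these possibly infinite tails. I would handle this by working with the inequalities at the level of each degree first (where rank is additive exactly, over short exact sequences of vector spaces), and only then summing over $i \ge n$, noting that all the manipulations are monotone and so remain valid in $\mathbb{N}\cup\{\infty\}$. The finite-dimensionality of $K$ guarantees the homology vanishes above dimension $\dim K$, so although individual ranks may be infinite, the tail sums are honest finite sums of (possibly infinite) terms, and no convergence subtlety arises beyond the extended-arithmetic care just described.
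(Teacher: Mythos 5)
The paper does not actually write out a proof here: it cites Bredon \cite[III.4.1]{bredon1} and Floyd \cite{floyd2}, and observes that the last clause follows from \eqref{orbit space iso}. Your plan has the right skeleton (the exact triangle of Theorem \ref{exact triangle}, rank counting, vanishing above $\dim K$), but the key bookkeeping step as you describe it does not close. Writing $\varrho_i=\rk\Hlf{\varrho}_i(K,L)$, $b_i=\rk\HLF_i(K,L)$, $b_i^G=\rk\HLF_i(K^G,L^G)$, the two symmetric tail-rank inequalities that exactness actually yields are
$$\sum_{i\ge n}\bar\varrho_i+\sum_{i\ge n}b_i^G\le\sum_{i\ge n+1}\varrho_i+\sum_{i\ge n}b_i,
\qquad
\sum_{i\ge n}\varrho_i+\sum_{i\ge n}b_i^G\le\sum_{i\ge n+1}\bar\varrho_i+\sum_{i\ge n}b_i.$$
Adding them and ``eliminating'' the common special-homology tails requires subtracting them from both sides, which is illegitimate in $\bbN\cup\{\infty\}$ --- and their finiteness is part of the theorem's \emph{conclusion}, not a hypothesis. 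Worse, even granting the cancellation, what remains is
$$\varrho_n+\bar\varrho_n+2\sum_{i\ge n}b_i^G\le 2\sum_{i\ge n}b_i,$$
which only gives $\tfrac12\varrho_n+\sum_{i\ge n}b_i^G\le\sum_{i\ge n}b_i$: a factor of two short of the stated inequality. Your proposed base case at $j=p-1$ via the second sequence in \eqref{smith chain complexes} also does not play the role you assign it; the induction in this argument is on the homological degree, not on $j$.

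The repair is Bredon's actual mechanism: a downward induction on the degree $n$, alternating between $\varrho$ and $\bar\varrho$, in which nothing is ever subtracted. Exactness at the term $\Hlf{\bar\varrho}_n(K,L)\oplus\HLF_n(K^G,L^G)$ gives the single-degree inequality $\bar\varrho_n+b_n^G\le\varrho_{n+1}+b_n$, together with its mirror $\varrho_n+b_n^G\le\bar\varrho_{n+1}+b_n$. The statement $S(\varrho,n)\colon\ \varrho_n+\sum_{i\ge n}b_i^G\le\sum_{i\ge n}b_i$ holds vacuously for $n>\dim K$, and assuming $S(\bar\varrho,n+1)$ one computes
$$\varrho_n+\sum_{i\ge n}b_i^G=(\varrho_n+b_n^G)+\sum_{i\ge n+1}b_i^G\le b_n+\Bigl(\bar\varrho_{n+1}+\sum_{i\ge n+1}b_i^G\Bigr)\le b_n+\sum_{i\ge n+1}b_i=\sum_{i\ge n}b_i.$$
Every step is an addition of inequalities valid in $\bbN\cup\{\infty\}$, so no prior finiteness is needed; the finiteness clause and the assertion about $\rk\HLF_i(K^\bigast,K^G\cup L^\bigast)$ then follow, as you say, from \eqref{orbit space iso} with $\varrho=\sigma$, applied at each degree $i\ge n$.
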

\begin{proof}
See Bredon \cite[III.4.1]{bredon1} or Floyd \cite{floyd2}. By ``rank" we mean the rank (or dimension) of the indicated homology groups as $\Fp$-vector spaces. The last part follows from \eqref{orbit space iso}.
\end{proof}

We now pass from locally finite homology to coarse homology when $G=\cy p$ acts by isometries on a proper metric space. Our results will apply equally to coarse $G$-spaces: we again use Theorem \ref{kleiner-leeb} to justify replacing a coarse $G$-action by a coarsely equivalent isometric $G$-action. 
\begin{definition} Let $X$ be a proper metric space with a coarse action of a  finite group $G$. We say that $X$ is $G$-\emph{finitistic} if the action admits a regular $G$-coarsening system $\{\cU_n\}$ whose nerves $K(\cU_n)$ have uniformly bounded dimension, for all $n$.
\end{definition}

%%%%%%%%%%%%%%%%%
For the remainder of this
section,  $X$ is a proper metric space and $G = \cy{p}$ acts by isometries on $X$. We will also assume that $X$ is $G$-finitistic and that 
 the bounded fixed set $X^G_{bd}$ exists, or the $G$-action is \emph{tame} (see Definition \ref{bdd}).

For any regular $G$-coarsening system $\{\cU_n\}$ associated to the $G$-action on $X$, we have seen that any two equivariant
coarsening maps induce contiguous equivariant simplicial maps and the induced chain
maps are equivariantly chain homotopic. Thus we can define
  the \emph{Smith special homology groups} for coarse homology
$$\ch^\varrho_*(X)=\varinjlim\Hlf{\varrho}_*\bigl(K(\cU_n)\bigr),$$
for $\varrho = \tau^j$ and $1\leq j \leq p-1$. The Smith homology groups 
 are natural with respect to equivariant coarse maps.

Passing to the limit in the Smith triangle (see Theorem \ref{exact triangle}) associated to
$\Hlf{\varrho}_*\bigl(K(\cU_n)\bigr)$, and by using Corollary \ref{rhomap}, we obtain
the Smith exact triangle for coarse homology:
\eqncount
\begin{equation}\label{coarse exact triangle}
\vcenter{\xymatrix{
 & {\ch_*(X)} \ar[dl]_{\varrho_*}& \\
{\ch^\varrho_*(X)} \ar[rr]^(0.4){\delta_*} & & {\ch^{\bar{\varrho}}_*(X)\oplus\ch_*(X^G_{bd})} \ar[ul]_{i_*} } }
\end{equation}
where the horizontal map has degree $-1$ and the other maps have degree $0$. This triangle
is exact because the direct limit functor is exact.
Note that the isomorphism from \eqref{orbit space iso}
$$\Hlf{\sigma}_*\bigl(K(\cU_n)\bigr)\cong \HLF_*\bigl(K(\cU_n)/G,
K(\cU_n)^G\bigr)$$
is natural, and thus
\eqncount
\begin{equation}\label{rel.hom}
\ch^\sigma_*(X)\cong\ch_*(X/G,X^G_{bd}),
\end{equation}
where $X^G_{bd}$ is considered via the projection map as a subspace of $X/G$ (see Remark \ref{rem: orbit}).

We now establish the coarse version of the Smith inequalities.
\begin{theorem}\label{thm: pasmith}
Let $X$ be a proper metric space and
$G$ a cyclic group of prime order $p$, with a coarse action  on $X$. Assume that $X$ is $G$-finitistic and that $X^G_{bd}$ exists. Then, for any $n\ge 0$ and
for any $\varrho=\tau^j$ with $1\le j\le p-1$,
$$\rk \ch_m^\varrho(X) +\sum_{i\ge m}\rk  \ch_i(X^G_{bd})
\le\sum_{i\ge m}\rk  \ch_i(X).$$
If the right-hand side is finite, then the left-hand side is also finite, 
and in particular $\rk  \ch_i(X/G, X^G_{bd})$ is finite, for all $i\ge m$.
\end{theorem}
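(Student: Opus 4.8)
The plan is to deduce the inequality directly from the coarse Smith exact triangle \eqref{coarse exact triangle}, rather than by taking ranks in the finite-level inequalities of Theorem \ref{ineg} and passing to the direct limit over $n$. The latter route breaks down at the right-hand side: for a direct system of $\Fp$-vector spaces one only has $\rk \varinjlim A_n \le \liminf_n \rk A_n$, so a naive limit of the finite Smith inequalities would leave a $\liminf$ on the right, which is the wrong direction for an upper bound. The saving feature is that the direct limit functor is exact, so the entire homological bookkeeping of classical Smith theory is inherited, term for term, by coarse homology. By Lemma \ref{lem: coarsening} and Theorem \ref{kleiner-leeb} we may assume $G$ acts by isometries, as already arranged in the statement.

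First I would record the finiteness that the hypothesis supplies. Since $X$ is $G$-finitistic, fix a regular $G$-coarsening system $\{\cU_n\}$ with $\dim K(\cU_n)\le D$ for all $n$ and some fixed $D$. Then $\HLF_i(K(\cU_n)) = \Hlf{\varrho}_i(K(\cU_n)) = \HLF_i(K(\cU_n)^G) = 0$ for every $i>D$, and passing to the direct limit over $n$ -- using Corollary \ref{rhomap} to identify $\varinjlim \HLF_*(K(\cU_n)^G)$ with $\ch_*(X^G_{bd})$ -- shows that $\ch_i(X)$, $\ch^\varrho_i(X)$ and $\ch_i(X^G_{bd})$ all vanish for $i>D$. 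Thus the coarse triangle \eqref{coarse exact triangle} is a long exact sequence of $\Fp$-vector spaces concentrated in degrees $0\le i\le D$, so each tail sum $\sum_{i\ge m}$ appearing in the statement has only finitely many nonzero terms.

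With this boundedness in place, the core step is to transcribe the purely algebraic rank argument of Bredon \cite[III.4.1]{bredon1}, which derives the Smith inequalities of Theorem \ref{ineg} from the exact triangles of Theorem \ref{exact triangle} together with the short exact sequences \eqref{smith chain complexes}, to the coarse setting. Every input of that argument is present here: the coarse triangles \eqref{coarse exact triangle} are exact for all $\varrho=\tau^j$, $1\le j\le p-1$; the coarse analogues of the second sequences in \eqref{smith chain complexes} exist by the same exactness of $\varinjlim$; the groups vanish above degree $D$; and the orbit-space identification \eqref{rel.hom}, the coarse form of \eqref{orbit space iso}, supplies the relative term. Substituting $\ch^\varrho_*(X)$, $\ch^{\bar\varrho}_*(X)\oplus\ch_*(X^G_{bd})$ and $\ch_*(X)$ for their simplicial counterparts in the rank count produces
$$\rk \ch^\varrho_m(X) + \sum_{i\ge m}\rk \ch_i(X^G_{bd}) \le \sum_{i\ge m}\rk \ch_i(X).$$

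The finiteness clause then follows formally. If $\sum_{i\ge m}\rk\ch_i(X)$ is finite, then $\rk\ch_i(X)<\infty$ for each $i\ge m$ and vanishes for $i>D$, so the displayed inequality forces $\rk\ch^\varrho_m(X)$ and $\sum_{i\ge m}\rk\ch_i(X^G_{bd})$ to be finite; taking $\varrho=\sigma=\tau^{p-1}$ in each degree $i\ge m$ and applying \eqref{rel.hom} gives finiteness of $\rk\ch_i(X/G,X^G_{bd})$. I expect the only real difficulty to be precisely the point flagged at the start: verifying that Bredon's derivation is genuinely formal in the exact triangles and uses the finite-complex hypothesis only to guarantee eventual vanishing, so that the $G$-finitistic condition is a faithful substitute. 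Once that is confirmed, the rank arithmetic is identical to the compact case and nothing further is needed.
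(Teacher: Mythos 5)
Your proof is correct, but it is organized differently from the paper's own argument. The paper proves the theorem by applying the finite-level Smith inequalities of Theorem \ref{ineg} to each nerve $K(\cU_n)$ separately, obtaining for every $n$ an inequality among the ranks of $\Hlf{\varrho}_m\bigl(K(\cU_n)\bigr)$, $\HLF_i\bigl(K(\cU_n)^G\bigr)$ and $\HLF_i\bigl(K(\cU_n)\bigr)$, and then simply ``passing to the limit over $n$''; it does not rerun Bredon's rank count on the coarse exact triangle as you do. Your opening observation is precisely why your organization is the safer one: since one only has $\rk\varinjlim A_n\le\liminf_n\rk A_n$, an inequality between ranks at each finite level does not formally transfer to the direct limits, because the ranks on the right-hand side can drop in the limit. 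The robust justification --- and the only reasonable reading of the paper's ``passing to the limit'' --- is the one you spell out: first take $\varinjlim$ of the exact triangles (legitimate because $\varinjlim$ is exact, which is exactly how the paper establishes \eqref{coarse exact triangle}), then run the purely vector-space rank argument of Bredon \cite[III.4.1]{bredon1} on the resulting long exact sequence, using the uniform bound $\dim K(\cU_n)\le D$ supplied by the $G$-finitistic hypothesis to guarantee vanishing above degree $D$ so that the telescoping sum terminates. All your other ingredients match the paper's: Corollary \ref{rhomap} to identify the limit of the fixed subcomplexes with $\ch_*(X^G_{bd})$ (this is where the existence of $X^G_{bd}$ is used), and \eqref{rel.hom} with $\varrho=\sigma$ for the final finiteness clause. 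One small remark: the second family of short exact sequences in \eqref{smith chain complexes} is not actually needed for the inequality --- only the exact triangle is --- it enters later in the Euler characteristic identity of Theorem \ref{thm: coarse euler}; invoking it here is harmless but superfluous.
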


\begin{proof}
Since $X$ is $G$-finitistic, there is a regular $G$-coarsening system $\{\cU_n\}$ for $X$ and an integer $q$, such that $\dim K(\cU_n)\leq q$ for all $n$.
Thus $\ch_j(X)=0$ for $j>q$ and,
similarly $\ch^\varrho_j(X)=0$  since the nerves $K(\cU_n)$ do not contain any
$q+1$ simplices, for any $n$. Let
\begin{align*}
a^n_i&= \rk  \HLF_i\bigl(K(\cU_n)^G\bigr) &
b^n_i&= \rk  \HLF_i\bigl(K(\cU_n)\bigr) \\
c^n_i&= \rk  \Hlf{\varrho}_i\bigl(K(\cU_n)\bigr) &
\bar{c}_i&= \rk  \Hlf{\bar{\varrho}}_i\bigl(K(\cU_n)\bigr).
\end{align*}
From Theorem \ref{ineg} it follows that, for any $n$,
$$c^n_m +\sum_{i\ge m}a^n_i\le\sum_{i\ge m}b^n_i .$$
The above discussion shows that $b^n_i=0$ for $i>q$ and for any $n$. Passing to the
limit over $n$ we obtain
$$\rk  \ch_m^\varrho(X) +\sum_{i\ge m}\rk  \ch_i(X^G_{bd})
\le\sum_{i\ge m}\rk  \ch_i(X).$$
The last part follows from equation \eqref{rel.hom}.
\end{proof}

In the next statement, the notation $\chi(X)$, $\chi(X^G_{bd})$, and $\chi(X/G)$ means the Euler characteristic with respect to their coarse homology, and $\rk \ch (X)$ denotes its ``total rank" (the sum of the ranks over $\Fp$ of all the coarse homology groups of $X$).
\begin{theorem}\label{thm: coarse euler}
Let $X$ and $G$ be as before with $\rk \ch_*(X)<\infty$. Then
$$\chi(X)+(p-1)\chi(X^G_{bd})=p\chi(X/G),$$ therefore $\chi(X^G_{bd})\equiv\chi(X)\pmod p$.
\end{theorem}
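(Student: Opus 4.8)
The plan is to deduce the identity purely formally from the two families of Smith exact sequences already available in coarse homology, reducing everything to the single twisted Euler characteristic attached to $\sigma = \tau^{p-1}$. First I would verify that every Euler characteristic in sight is well-defined. Since $X$ is $G$-finitistic there is a uniform bound $q$ with $\dim K(\cU_n)\le q$ for all $n$, so $\ch_i(X)=\ch^\varrho_i(X)=0$ for $i>q$ and for $i<0$; moreover the coarse Smith inequalities (Theorem \ref{thm: pasmith}), together with the hypothesis $\rk\ch_*(X)<\infty$, bound $\rk\ch^\varrho_i(X)$, $\rk\ch_i(X^G_{bd})$ and $\rk\ch_i(X/G,X^G_{bd})$. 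Hence each $\chi$ below is a finite alternating sum and is additive along (direct limits of) bounded long exact sequences.

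Writing $e_j := \chi\bigl(\ch^{\tau^j}_*(X)\bigr)$ for $1\le j\le p-1$, additivity of $\chi$ over the long exact sequence unrolled from the coarse Smith exact triangle \eqref{coarse exact triangle} (taken with $\varrho=\tau^j$, so $\bar\varrho=\tau^{p-j}$) gives
\[
\chi(X)=e_j+e_{p-j}+\chi(X^G_{bd}),\qquad 1\le j\le p-1 .
\]
This by itself shows only that $e_j+e_{p-j}$ is independent of $j$, so next I would feed in the second family of exact sequences of \eqref{smith chain complexes}. Applying locally finite homology to $0\to\sigma C\to\tau^j C\xrightarrow{\ \tau\ }\tau^{j+1}C\to 0$ and passing to the direct limit produces a long exact sequence relating $\ch^\sigma_*$, $\ch^{\tau^j}_*$ and $\ch^{\tau^{j+1}}_*$, whence $e_j=e_{p-1}+e_{j+1}$ for $1\le j\le p-2$ (recall $\sigma=\tau^{p-1}$). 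Telescoping yields $e_j=(p-j)\,e_{p-1}$; substituting $j=p-1$ into the triangle relation gives
\[
\chi(X)-\chi(X^G_{bd})=e_{p-1}+e_1=e_{p-1}+(p-1)e_{p-1}=p\,e_{p-1},
\]
which already establishes the congruence $\chi(X^G_{bd})\equiv\chi(X)\pmod p$.

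To obtain the full equation I would identify $e_{p-1}$ geometrically. By \eqref{rel.hom} we have $\ch^\sigma_*(X)\cong\ch_*(X/G,X^G_{bd})$, so $e_{p-1}=\chi\bigl(\ch_*(X/G,X^G_{bd})\bigr)$, and additivity of $\chi$ over the long exact sequence of the pair $(X/G,X^G_{bd})$ — viewing $X^G_{bd}\subseteq X/G$ as in Remark \ref{rem: orbit} — gives $\chi(X/G)=\chi(X^G_{bd})+e_{p-1}$. Combining this with $\chi(X)-\chi(X^G_{bd})=p\,e_{p-1}$ produces
\[
\chi(X)-\chi(X^G_{bd})=p\bigl(\chi(X/G)-\chi(X^G_{bd})\bigr),
\]
which rearranges to $\chi(X)+(p-1)\chi(X^G_{bd})=p\,\chi(X/G)$.

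The main obstacle is not the algebra but the bookkeeping behind the finiteness of the Euler characteristics. The locally finite chain complexes $\varrho C(K(\cU_n))$ are genuinely infinite, so the elementary orbit-counting argument of the classical case is unavailable; finiteness must instead be imported from the limiting groups via Theorem \ref{thm: pasmith} and the uniform dimension bound, and one must check that $\chi$ remains additive across the relevant direct limits of long exact sequences. The one essential input beyond the exact triangle is the second family in \eqref{smith chain complexes}: the triangle alone pins down only the symmetric sums $e_j+e_{p-j}$, and it is the relation $e_j=e_{p-1}+e_{j+1}$ that forces each $e_j$ to be a multiple of $e_{p-1}$ and makes the factor $p$ appear.
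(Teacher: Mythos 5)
Your proposal is correct and follows essentially the same route as the paper: both use the Smith exact triangle for $\sigma$ (equivalently $j=p-1$) to get $\chi(X)=\chi(\sigma)+\chi(\tau)+\chi(X^G_{bd})$, telescope the relations $\chi(\tau^j)=\chi(\sigma)+\chi(\tau^{j+1})$ from the second family in \eqref{smith chain complexes}, identify $\chi(\sigma)$ with $\chi(X/G,X^G_{bd})$ via \eqref{rel.hom}, and finish with the long exact sequence of the pair $(X/G,X^G_{bd})$. Your extra care about finiteness via Theorem \ref{thm: pasmith} and the uniform dimension bound matches the paper's opening remarks.
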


\begin{proof}
Compare Bredon \cite[III.4.3]{bredon1} or Floyd \cite{floyd2}. The exact sequence of the inclusion $X^G_{bd} \subseteq X/G$ and Theorem \ref{thm: pasmith} shows that
$\rk \ch(X/G) < \infty$, and hence all three Euler characteristics are defined. We have the relation
$$\chi(X/G) = \chi(X/G, X^G_{bd}) + \chi(X^G_{bd})$$
by considering the long exact sequence of the pair $(X/G, X^G_{bd})$ are a chain complex with zero homology. Let $\chi(\varrho) := \chi(\ch^\varrho(X))$, and note that
$$\chi(X) = \chi(\sigma) + \chi(\tau) + \chi(X^G_{bd})$$
from the Smith exact triangle \eqref{coarse exact triangle} for $\varrho = \sigma$. Now 
from the second exact sequence of Smith chain complexes in \eqref{smith chain complexes} one obtains the equations
$\chi(\tau^j) = \chi(\tau^{j+1}) + \chi(\sigma)$, for $1\leq j \leq p-2$. By adding all these equations, and using $\chi(\tau^{p-1}) = \chi(\sigma)$, we get
$$\chi(X) = p\chi(\sigma) + \chi(X^G_{bd}).$$
But $\chi(\sigma) = \chi(X/G, X^G_{bd})$ by \eqref{rel.hom}, hence
$$\chi(X) = p(\chi(X/G) - \chi(X^G_{bd})) + \chi(X^G_{bd})$$
which gives the required formula.
\end{proof}

\section{The Proof of Theorem A}\label{proof of thma}
The most often used result of the classical P.~A.~Smith theory is the application to actions on mod $p$ homology spheres. For our coarse version of this result, we observe that Euclidean space $\bbR^m$ is a coarse $m$-sphere.  We use this
terminology because the coarse homology of $\bbR^m$ is equal to the reduced
ordinary homology of an $m$-sphere, by \eqref{euclidean space}. More generally, a (mod $p$) \emph{coarse $m$-sphere}
 is a metric space with  the same (mod $p$) coarse homology as $\bbR^m$. We have the following  application of coarse P.~A.~Smith theory.
\begin{theorem}\label{final}
Let  $X$ be a proper
 metric space, which is a (mod $p$) coarse
homology $m$-sphere, for some prime $p$. Let $G$ be a finite  $p$-group with a  coarse action on $X$. Assume that $X$ is $G$-finitistic and that the $G$-action is tame. Then $X^G_{bd}$ is
a (mod $p$)  coarse homology $r$-sphere, for some $0\le r\le m$. If $p$ is odd, then $m-r$ is even.
\end{theorem}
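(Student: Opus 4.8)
The plan is to reduce to the case $G = \cy p$ and then read off the conclusion from the coarse Smith inequalities (Theorem \ref{thm: pasmith}) together with the coarse Euler characteristic formula (Theorem \ref{thm: coarse euler}). By Theorem \ref{kleiner-leeb} I may assume $G$ acts by isometries, and since condition (A) of Definition \ref{regular complex} is quantified over \emph{all} subgroups, a regular $G$-coarsening system is simultaneously a regular $H$-coarsening system for every subgroup $H \subseteq G$; hence the hypotheses ``$G$-finitistic'' and ``tame'' descend to every subgroup encountered in the induction. Throughout, recall that a coarse homology $m$-sphere has $\ch_*(X) = \Fp$ concentrated in degree $m$, so that $\rk \ch_*(X) = 1$ and $\chi(X) = (-1)^m$.

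For the base case $G = \cy p$, I would first extract two qualitative facts from Theorem \ref{thm: pasmith}. Setting $n = 0$ gives
$$\sum_{i \ge 0} \rk \ch_i(X^G_{bd}) \le \sum_{i \ge 0} \rk \ch_i(X) = 1,$$
so $\ch_*(X^G_{bd})$ has total rank at most $1$; setting $n = m+1$ gives $\ch_i(X^G_{bd}) = 0$ for all $i > m$. Thus either $\ch_*(X^G_{bd})$ vanishes, or it equals $\Fp$ in a single degree $r$ with $0 \le r \le m$ (the lower bound is automatic, as coarse homology is supported in non-negative degrees). To rule out the vanishing case, and to pin down the parity when $p$ is odd, I would invoke Theorem \ref{thm: coarse euler} (applicable since $\rk \ch_*(X) < \infty$), which yields $\chi(X^G_{bd}) \equiv \chi(X) = (-1)^m \pmod p$. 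Total rank $0$ would force $0 \equiv (-1)^m \pmod p$, impossible for any prime. Hence the total rank is exactly $1$, so $X^G_{bd}$ is a coarse homology $r$-sphere with $0 \le r \le m$ and $\chi(X^G_{bd}) = (-1)^r$; when $p$ is odd, $(-1)^r \equiv (-1)^m \pmod p$ with $p \ge 3$ forces $(-1)^{m-r} = 1$, i.e. $m-r$ even.

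For the inductive step I would induct on $|G|$. When $|G| > p$, the centre $Z(G)$ is nontrivial, so it contains a central $C \cong \cy p$. The base case applied to $C$ acting on $X$ produces a coarse homology $s$-sphere $X^C_{bd}$ with $0 \le s \le m$ and $m-s$ even for odd $p$. I would then let $\bar G = G/C$ act coarsely on $Y := X^C_{bd}$: a lift $h$ of $\bar h \in \bar G$ restricts to a quasi-isometry of the coarsely dense subspace $X^C_k$, well-defined up to the bounded ambiguity $d(hcx, hx) = d(cx, x) \le k$ in the choice of lift, so $\bar G$ acts coarsely on $Y$. That $Y$ remains finitistic follows from Corollary \ref{rhomap}, which identifies $\ch_*(X^C_{bd})$ with $\varinjlim \HLF_*\bigl(K(\cU_n)^C\bigr)$: the fixed subcomplexes $K(\cU_n)^C$ are regular $\bar G$-complexes of uniformly bounded dimension. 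Granting the identification $Y^{\bar G}_{bd} \simeq X^G_{bd}$ below, the inductive hypothesis applied to $\bar G$ acting on $Y$ gives that $X^G_{bd}$ is a coarse homology $r$-sphere with $0 \le r \le s$ and $s-r$ even for odd $p$; adding parities gives $m-r$ even.

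The main obstacle is the coarse identification $Y^{\bar G}_{bd} \simeq X^G_{bd}$, together with the tameness of the quotient action needed to launch the induction. I would establish these by comparing approximate fixed sets directly: for $y \in X^C_k$ and $g = hc \in G$ one has $d(y, gy) \le d(y, hy) + d(hy, hcy) = d(y, hy) + d(y, cy)$, so $Y^{\bar G}_l$ and $X^G_{l+k}$ agree up to a uniformly bounded distortion, while $X^G_{k'} \subseteq X^C_{k'}$ controls the reverse inclusion; the same comparison applied to each intermediate subgroup $C \subseteq K \subseteq G$ yields $Y^{K/C}_{bd} \simeq X^K_{bd}$ and hence tameness of $\bar G$ acting on $Y$. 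The delicate point is that bounded fixed sets are only defined up to coarse equivalence, so one must track the stabilization constants through the passage to the quotient and verify that the stabilized coarse types genuinely match. Once this is done, everything else is formal, the real content residing entirely in Theorems \ref{thm: pasmith} and \ref{thm: coarse euler}.
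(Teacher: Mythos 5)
Your proposal is correct and rests on exactly the same two pillars as the paper's proof: the coarse Smith inequalities (Theorem \ref{thm: pasmith}), used with two thresholds to bound the total rank of $\ch_*(X^G_{bd})$ by $1$ and to kill it above degree $m$, and the Euler characteristic congruence (Theorem \ref{thm: coarse euler}) to exclude total rank zero and to fix the parity when $p$ is odd. The only genuine difference is the organization of the induction on $|G|$. The paper chooses a normal subgroup $G_0 \triangleleft G$ of index $p$, applies the inductive hypothesis to $G_0$ acting on the \emph{original} space $X$ (whose hypotheses are given), and performs the explicit $\cy p$ Smith step last, on the subspace $\ZZ = X^{G_0}_k$ with its $G/G_0$-action; you do the mirror image, performing the explicit $\cy p$ step first on a central $C \cong \cy p$ and then invoking the inductive hypothesis for $G/C$ acting on $X^C_{bd}$. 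Both orderings force the same auxiliary verifications: that the approximate fixed set inherits a regular coarsening system of bounded dimension (the correct reference for this is Remark \ref{rem: fixed covering} rather than Corollary \ref{rhomap}, which only computes homology), that the induced quotient action on it is tame, and that the iterated bounded fixed set coincides with $X^G_{bd}$. Your sketch of the last two points --- interleaving $Y^{\bar G}_l$ with $X^G_{l'} \cap X^C_k$ up to bounded shifts of the parameters and then using coarse density of $X^G_k$ in $X^G_l$ --- is the same argument, in substance, as the paper's chain $X^G_k = (X^{G_0}_k)^G_k \subseteq (X^{G_0}_k)^G_l \subseteq (X^{G_0}_l)^G_l = X^G_l$. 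The paper's ordering is marginally tidier in that the inductive hypothesis is always applied to $X$ itself, so no new ambient space has to be certified as a sphere before recursing; your ordering buys nothing extra but loses nothing either, since normality (which centrality supplies) is all that is needed for $X^C_k$ to be $G$-invariant.
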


\begin{proof}
By Theorem \ref{kleiner-leeb} we may assume that $G$ acts by isometries on $X$. Since $X$ is $G$-finitistic,  there is a regular $G$-coarsening system $\{\cU_n\}$ for $X$ and an integer $q$, such that $\dim K(\cU_n)\leq q$ for all $n$. Let $G_0\triangleleft G$ be a normal subgroup of index $p$ in $G$, so that
$G/G_0 \cong \cy p$. By induction on the order of $G$, we may assume that $X^{G_0}_{bd}$ is a (mod $p$) coarse homology $t$-sphere, for some $0\leq t \leq m$, and $m-t\equiv 0 \pmod 2$  if $p$ is odd. Since the $G$-action on $X$ is tame, there exists an $k_0 >0$ such that
 $X^G_k$ is coarsely equivalent to $X^G_{bd}$, and  $X^{G_0}_k$ is coarsely equivalent to $X^{G_0}_{bd}$, for any $k > k_0$ (see Definition \ref{bdd}). 
 
 Fix $k > k_0$, and let $\ZZ:= X^{G_0}_k$. Since $\ZZ \subset X$ is a $G$-invariant subspace, and the $G_0$-action on $\ZZ$ is coarsely ineffective, this subspace $\ZZ$ inherits a coarse $G/G_0$-action from the $G$-action on $X$.
The inductive step will be completed  by applying Theorem  \ref{thm: pasmith}, to  $\ZZ$, after we check that the hypotheses are satisfied.

\medskip
\noindent
(i) 
\emph{The space $\ZZ=X^{G_0}_k$ is $G/G_0$-finitistic}.  In Remark \ref{rem: fixed covering}, we noted that there is a cofinal subsequence $\{\cU_{m_i}\}$ of the regular $G$-coarsening system such that
$\{\cU^{G_0}_{m_i}\}$ is a regular $G$-coarsening system for the subspace
$X^{G_0}_k$. Since $G_0$ acts trivially on the nerves $K(\cU^{G_0}_{m_i})$, we have a regular $G/G_0$-coarsening system for $X^{G_0}_k$.
 
 \smallskip
 \noindent
(ii) 
\emph{The coarse $G/G_0$-action on $\ZZ=X^{G_0}_k$ is tame}.
 Since 
 $$X^G_k = (X^{G_0}_k)^G_k \subseteq (X^{G_0}_k)^G_l \subseteq
 (X^{G_0}_l)^G_l = X^G_l$$
 for any $l \geq k > k_0$, and the inclusion $X^G_k \subset X^G_l$ is coarsely dense,  the inclusion 
 $$\ZZ^{G/G_0}_k = (X^{G_0}_k)^G_k \subseteq (X^{G_0}_k)^G_l = \ZZ^{G/G_0}_l$$
 is also coarsely dense. Hence the
 coarse $G/G_0$-action on $\ZZ$ is tame, and $\ZZ^{G/G_0}_k = \ZZ^{G/G_0}_{bd}$.

  We now apply Theorem
 \ref{thm: pasmith} to the (mod $p$) coarse homology $t$-sphere $\ZZ=X^{G_0}_k$, with respect to the induced coarse action of $G/G_0 \cong \cy p$.
 We have $\rk \ch_*(\ZZ) = 1$, which implies $\rk \ch_*(\ZZ^{G/G_0}_{bd}) \leq 1$. But $\rk \ch_*(\ZZ^{G/G_0}_{bd}) =0$ is not possible, since $\chi(\ZZ^{G/G_0}_{bd} )\equiv\chi(\ZZ)\pmod p$, by
 Theorem \ref{thm: coarse euler}. Therefore 
 $$\rk \ch_*(\ZZ^{G/G_0}_{bd}) = 1, $$
  so that $\ZZ^{G/G_0}_{bd}$ is a coarse homology $r$-sphere, for some $0 \leq r \leq t$. If $p$ is odd, then $\chi(\ZZ^{G/G_0}_{bd})\equiv\chi(\ZZ)\pmod p$ implies that $t$ and $r$
are either both odd or both even, thus $t-r$ is even. Since 
$$\ZZ^{G/G_0}_{bd} = (X^{G_0}_k)^{G/G_0}_k = (X^{G_0}_k)^{G}_k = X^G_k = X^G_{bd}$$
for $k > k_0$, and $0 \leq r \leq t \leq m$ with $m-r \equiv 0\pmod 2$ if $p$ odd, we are done.
\end{proof}

\begin{proof}[The proof of Theorem A] Here $X$ is a proper geodesic metric space, with finite asymptotic dimension, and $G$ is a finite $p$-group acting by isometries. We can apply Theorem 
\ref{regular coarsening system} to conclude that $X$ is $G$-finitistic.  Now Theorem A follows from Theorem \ref{final}.
\end{proof}

We conclude by giving a coarse version of another well-known application of classical Smith theory:  the group $G = \cy p \times \cy p$, for $p$ a prime, can not act freely on a finitistic mod $p$ homology $m$-sphere (see Bredon \cite[III.8.1]{bredon1}). 

The bounded fixed set of a coarse finite group action is never empty (unlike the actual fixed set), so we need to define a coarse version of the term ``free action". 
\begin{definition} A tame $G$-action on a proper metric space $X$ is called  \emph{semifree at the large scale} if  $X^H_{bd} = X^G_{bd}$ for all non-trivial subgroups
$\{e\}\neq H\leq G$. A coarsely effective
 action is called \emph{free at the large scale} if there exists a compact subset $Y\subset X$, such that $X^H_{bd} = Y$, for all non-trivial subgroups
$\{e\}\neq H\leq G$.
\end{definition} 

\begin{example}
Let $X = \bbR^2$ with $G = \cy p$, for $p$ prime, acting by rotations. We call this a ``free" action in the coarse sense because the fixed set, although non-empty, is compact. The action is free at the large scale and $X^G_{bd} = \{0\}$.
\end{example}

\begin{theorem}\label{thm: semifree}
The group $G = \cy p \times \cy p$, for $p$ a prime, can not act tamely and semifreely at the large scale on a (mod $p$) coarse
homology $m$-sphere $X$, whenever
 $X$ is $G$-finitistic, and
$X^G_{bd}$ is
a (mod $p$)  coarse homology $r$-sphere, for some $0\le r< m$.
%\begin{enumerate}
%\item $X$ is $G$-finitistic, and
%\item $X^G_{bd}$ is
%a (mod $p$)  coarse homology $r$-sphere, for some $0\le r< m$.
%\end{enumerate}
\end{theorem}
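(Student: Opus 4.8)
The plan is to derive a contradiction by establishing a coarse analogue of Borel's formula for the rank-two group $G=\cy p\times\cy p$ and then feeding in the semifreeness hypothesis. First I would make the standard reductions: by Theorem \ref{kleiner-leeb} I may assume $G$ acts by isometries, and the $G$-finitistic hypothesis makes the coarse Smith machinery of the preceding sections available, so that Theorem \ref{final} (Theorem A) applies to every subgroup. Enumerate the $p+1$ subgroups $H_0,\dots,H_p$ of order $p$ in $G$; applying Theorem \ref{final} to each $H_i\cong\cy p$ shows that $X^{H_i}_{bd}$ is a coarse homology $r_i$-sphere for some $0\le r_i\le m$. The assumption that the action is semifree at the large scale says $X^{H_i}_{bd}=X^G_{bd}$ for every nontrivial subgroup, so in particular $r_i=r$ for all $i$.

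Second --- and this is the heart of the matter --- I would prove the coarse Borel formula
$$ m-r=\sum_{i=0}^{p}\bigl(r_i-r\bigr). $$
Granting it, the semifreeness relations $r_i=r$ collapse the right-hand side to $0$, forcing $m=r$ and contradicting the hypothesis $r<m$; this is precisely the coarse translation of the fact that a $(\cy p\times\cy p)$-representation whose fixed subspace under every order-$p$ subgroup is trivial must itself be trivial. The route I would take to the formula is to import the classical rank-two Smith-theoretic argument (Borel's seminar \cite{borel-seminar}, Bredon \cite[Chap.~III]{bredon1}) into coarse homology using the tools already built: the coarse Smith exact triangle \eqref{coarse exact triangle}, the identification \eqref{rel.hom} of the special homology $\ch^\sigma_*$ with the relative group $\ch_*(X/G,X^G_{bd})$, the coarse Euler-characteristic relation of Theorem \ref{thm: coarse euler}, and the fact from Theorem \ref{final} that each fixed set in sight is a coarse sphere concentrated in a single degree. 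Concretely, I would try to run an induction that peels off one cyclic quotient at a time --- using a subgroup $H_0$ to pass to the $G/H_0$-action on $X^{H_0}_k$ exactly as in the proof of Theorem \ref{final} --- while bookkeeping the codimensions $m-r_i$ through the Smith triangle so that they sum correctly over the whole subgroup lattice.

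The main obstacle is precisely this last formula, and the difficulty is structural rather than computational. Single-subgroup Smith theory, which is all that Theorems \ref{final} and \ref{thm: pasmith} directly supply, is entirely consistent with semifreeness: Theorem \ref{final} already holds for $G=\cy p\times\cy p$ and merely asserts that $X^G_{bd}$ is a coarse sphere, and the Euler-characteristic identity of Theorem \ref{thm: coarse euler}, even iterated over a maximal chain of subgroups, yields only the parity constraint $m\equiv r\pmod 2$ and never $m=r$. The obstruction to a semifree rank-two action is a genuinely rank-two phenomenon that couples all $p+1$ order-$p$ subgroups simultaneously, and in the classical setting this coupling is read off from the normal representation at a fixed point --- a local structure with no counterpart in coarse homology. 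Thus the crux is to manufacture this coupling purely homologically, either by the inductive codimension bookkeeping sketched above or, should that prove too rigid, by developing enough of an equivariant (Borel-construction) coarse homology to run a localization argument. I expect establishing the coarse Borel formula to absorb essentially all of the work, with the deduction of the contradiction from it being immediate.
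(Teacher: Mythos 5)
Your diagnosis is accurate --- the obstruction to a semifree $\cy p\times\cy p$ action is a genuinely rank-two phenomenon that no amount of one-subgroup-at-a-time Smith theory will detect --- but the proposal has a genuine gap: everything is made to rest on the coarse Borel formula $m-r=\sum_{i=0}^{p}(r_i-r)$, and no proof of it is given. Worse, the primary route you suggest for it (peeling off cyclic quotients and ``bookkeeping codimensions through the Smith triangle'') is exactly the kind of argument you have just explained cannot work: iterated applications of Theorems \ref{thm: pasmith} and \ref{thm: coarse euler} along a chain of subgroups only yield the inequalities $0\le r_i\le m$ and parity constraints, all of which are satisfied when every $r_i=r<m$. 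The classical proof of Borel's formula is not an iterated Smith argument; it needs the equivariant cohomology of the whole group at once together with a localization theorem. So your ``fallback'' is in fact the only road, and it is left undeveloped.

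The paper takes that road, but in a more economical form that never proves the full Borel formula. It first replaces $X$ by the coarsely equivalent $Z=X\times S^k\times S^k$ (with $k>2m$ odd) on which $G=G_1\times G_2$ acts with each factor acting freely on a sphere factor --- a device you would also need, since without a free action there is no fibration $K(\cV_n)\to K(\cV_n)/G\to BG$ on which to run a Borel-type spectral sequence. The limit of the Leray--Serre spectral sequences of these fibrations, with $E^2_{s,q}=H_s(BG;\ch_q(Z,Z^G_{bd}))$, shows $\ch_{m+1}(Z/G,Z^G_{bd})\neq 0$: since $Z$ is a coarse $m$-sphere and $Z^G_{bd}$ a coarse $r$-sphere with $r<m$, the coefficients are concentrated in degrees $r+1$ and $m$, and the only possible differential $d_{m-r}$ maps a group containing $H_{m-r}(BG;\Fp)$, of rank at least $2$, to a rank-one group, so it cannot be injective. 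On the other hand, the Smith inequality (Theorem \ref{thm: pasmith} together with \eqref{rel.hom}) applied first to the $G_1$-action on $Z$ gives $\ch_{m+1}(Z/G_1,Z^G_{bd})=0$, and applied to the residual $G_2$-action on $Z/G_1$ gives $\rk\ch_{m+1}(Z/G,Z^G_{bd})\le\rk\ch_{m+1}(Z/G_1,Z^G_{bd})=0$, the desired contradiction. Semifreeness is used precisely to identify $Z^{G_1}_{bd}=Z^{G}_{bd}$ so that the same relative pair appears in both computations. If you want to complete your write-up, replace the Borel formula by this single two-sided computation of $\ch_{m+1}(Z/G,Z^G_{bd})$.
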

In particular, under the above assumptions, $G = \cy p \times \cy p$, for $p$ a prime, can not act freely at the large scale on a (mod $p$) coarse
homology $m$-sphere, if $m>0$.
\begin{proof}[The proof of Theorem B]  We again apply
Theorem \ref{regular coarsening system} to conclude that $X$ is $G$-finitistic. Then Theorem B follows from Theorem \ref{thm: semifree}.
\end{proof}
\begin{proof}[The proof of Theorem \ref{thm: semifree}]
We write $G = G_1 \times G_2$, where $G_1$ and $G_2$ are cyclic subgroups of order $p$. Suppose, if possible, that $G$ acts semifreely on $X$ under the assumptions given in the statement. Then $X^{G_1}_{bd}= X^G_{bd}$ is a (mod $p$)  coarse homology $r$-sphere, for some $0\le r< m$. It will be convenient to replace $X$ by a coarsely equivalent space $Z = X\times E$, where $E=S^k\times S^k$  for some $k > 2m$, $k$ odd. In this space, we have a product action of $G$, where $G= G_1 \times G_2$ acts freely on $E=S^k\times S^k$ via  free isometric actions of $G_1$, $G_2$ on each sphere factor. The projection map $Z = X \times E \to X$ is a $G$-equivariant coarse equivalence, so $Z^G_{bd} = X^G_{bd}$.

We first observe that $\rk HC_{m+1}(Z/G, Z^G) > 0$. To compute this coarse homology group we use a regular $G$-coarsening system $\{\cU_n\}$ for $X$, and form the coarsening system $\{\cV_n\}$ for $Z$, with $\cV_n = \{ U \times E\vv U \in\cU_n\}$ for each $n$. Then 
$$HC_*(Z/G, Z^G) = \varinjlim \HLF_*(K(\cV_n)/G, K(\cV_n)^G).$$
To compute the right-hand side, we use the direct limit of the spectral sequences of the fibrations
$K(\cV_n) \to K(\cV_n)/G \to BG$ in locally finite homology with $\Fp$-coefficients, with $E^2_{s,q} = H_s(BG; \HLF_q(K(\cV_n),  K(\cV_n)^G))$. We obtain a spectral sequence with 
$$E^2_{s,q} =H_s(BG; \ch_q(Z. Z^G_{bd})) \Rightarrow \ch_*(Z/G, Z^G_{bd})$$ converging to $\ch_*(Z/G, Z^G_{bd})$.
Since $r <m$, the only possible differential $$d_{m-r}: H_{m-r+s}(BG; \ch_{r+1}(Z, Z^G_{bd})) \to H_s(BG; \ch_{m}(Z, Z^G_{bd}))$$ can not be injective, since
for example, $\ch_{r+1}(Z, Z^G_{bd})) = \Fp = \ch_{m}(Z, Z^G_{bd})$, and $\rk H_{m-r}(BG; \Fp) \geq 2$.

We can consider $Z/G$ as the quotient of $Z/G_1$ by the remaining $G_2$-action. By Theorem \ref{thm: pasmith} and formula \eqref{rel.hom} applied to $Z$ with the $G_1$-action, we have  $\ch_{m+1}(Z/G_1, Z^G_{bd}))=0$. By the same results applied to the $G_2$-action on $Z/G_1$, we have the inequality
$$\rk \ch_{m+1}(Z/G, Z^G_{bd})) = \rk \ch^\sigma_{m+1}(Z/G_1, Z^G_{bd}) \leq \rk \ch_{m+1}(Z/G_1, Z^G_{bd})),$$ which contradicts the calculation above.
\end{proof}

\begin{acknowledgement} The first author would like to thank Erik Kj\ae r Pedersen for helpful conversations about coarse geometry which led to the notion of the bounded fixed set, and to the second author's Ph.D thesis topic \cite{savin1} on which this paper is based.
\end{acknowledgement}

%\bibliographystyle{ih}
%\bibliography{ihmain}
%\end{document}
%%%%%%%%%%%%%%%%%%%%%%%%
\providecommand{\bysame}{\leavevmode\hbox to3em{\hrulefill}\thinspace}
\providecommand{\MR}{\relax\ifhmode\unskip\space\fi MR }
% \MRhref is called by the amsart/book/proc definition of \MR.
\providecommand{\MRhref}[2]{%
  \href{http://www.ams.org/mathscinet-getitem?mr=#1}{#2}
}
\providecommand{\href}[2]{#2}

\end{document}